\documentclass[reqno]{amsart}

\usepackage[english]{babel}

\usepackage[a4paper,top=2.9cm,bottom=2.9cm,left=3cm,right=3cm,marginparwidth=1.75cm]{geometry}

\usepackage{amssymb}
\usepackage{amsmath}
\usepackage{graphicx}
\usepackage{tikz-cd}
\usepackage{amsthm}
\usepackage{aliascnt}
\usepackage{enumitem}
\usepackage{hyperref}
\usepackage[capitalise]{cleveref}

\newtheorem{theorem}{Theorem}[section]
\newtheorem*{Thm*}{Theorem}

\newaliascnt{corollary}{theorem}
\newtheorem{corollary}[corollary]{Corollary}
\aliascntresetthe{corollary}

\newaliascnt{proposition}{theorem}
\newtheorem{proposition}[proposition]{Proposition}
\aliascntresetthe{proposition}

\newaliascnt{lemma}{theorem}
\newtheorem{lemma}[lemma]{Lemma}
\aliascntresetthe{lemma}

\theoremstyle{definition}

\newaliascnt{observation}{theorem}

\aliascntresetthe{observation}

\newaliascnt{claim}{theorem}

\aliascntresetthe{claim}

\newaliascnt{example}{theorem}
\newtheorem{example}[example]{Example}
\aliascntresetthe{example}

\newaliascnt{definition}{theorem}
\newtheorem{definition}[definition]{Definition}
\aliascntresetthe{definition}

\newaliascnt{remark}{theorem}
\newtheorem{remark}[remark]{Remark}
\aliascntresetthe{remark}

\crefname{corollary}{Corollary}{Corollaries}
\crefname{proposition}{Proposition}{Propositions}
\crefname{lemma}{Lemma}{Lemmas}
\crefname{observation}{Observation}{Observations}
\crefname{claim}{Claim}{Claims}
\crefname{example}{Example}{Examples}
\crefname{definition}{Definition}{Definitions}
\crefname{remark}{Remark}{Remarks}

\makeatletter
\newcount\pb@x \newcount\pb@y
\def\pb@scan#1{%
  \ifx#1\relax
  \else
    \if#1r\advance\pb@x\@ne\fi
    \if#1l\advance\pb@x\m@ne\fi
    \if#1d\advance\pb@y\@ne\fi
    \if#1u\advance\pb@y\m@ne\fi
    \expandafter\pb@scan
  \fi}
\def\pb@sign#1{\ifnum#1>\z@ 1\else\ifnum#1<\z@ -1\else 0\fi\fi}
\newcommand{\pb}{\@ifnextchar[{\pb@atpos}{\pb@atpos[0.15]}}
\def\pb@atpos[#1]#2{%
  \pb@x\z@ \pb@y\z@ \pb@scan#2\relax
  \pgfmathparse{atan2(-(\pb@sign{\pb@y}),\pb@sign{\pb@x})+45}%
  \edef\pb@corner{\noexpand\arrow[phantom, pos=#1]{#2}%
    {\noexpand\rotatebox[origin=c]{\pgfmathresult}{$\noexpand\lrcorner$}}}%
  \pb@corner}
\makeatother

\makeatletter
\newcommand{\switchingdpb}[1]{%
  \in@{l}{#1}%
  \ifin@
    \triangleright
  \else
    \triangleleft
  \fi
}
\makeatother

\newcommand{\cart}[1]{\arrow[phantom]{#1}{\dagger}}

\def\Set{\mathsf{Set}}

\def\id{\mathsf{id}}
\def\el{\mathsf{el}}
\def\cod{\mathsf{cod}}
\def\dom{\mathsf{dom}}

\def\DFib{\mathsf{DFib}}

\def\Sq{\mathsf{Sq}}
\def\HKl{\mathsf{HKl}}
\def\Span{\mathsf{Span}}
\def\Rel{\mathsf{Rel}}
\def\Mod{\mathsf{Mod}}
\def\Mat{\mathsf{Mat}}
\def\Cat{\mathsf{Cat}}
\def\thg{\mathord{\text{--}}}
\def\Top{\mathsf{Top}}
\def\Poly{\mathsf{Poly}}

\def\KK{\HKl_{\mathsf{T}}(\E)}

\def\Cnr{\mathsf{Cnr}}

\newcommand{\tor}{\ensuremath{\relbar\joinrel\mapstochar\joinrel\rightarrow}}
\def\A{\mathcal{A}}
\def\B{\mathcal{B}}
\def\C{\mathcal{C}}
\def\D{\mathcal{D}}
\def\K{\mathcal{K}}
\def\L{\mathcal{L}}
\def\M{\mathcal{M}}

\def\P{\mathcal{P}}

\def\sec\S
\def\S{\mathcal{S}}
\def\E{\mathcal{E}}
\def\X{\mathcal{X}}
\def\Y{\mathcal{Y}}

\def\op{{\mathsf{op}}}

\def\To{\Rightarrow}
\def\bt{\mathbin{\bowtie}}
\def\defeq{\mathbin{:=}}

\newcommand{\hcolor}{orange}
\newcommand{\kcolor}{blue}
\newcommand{\idarr}[1]{\arrow[equal]{#1}}

\newcommand{\arrowt}[2]{\arrow[#1, "|"{marking, scale=0.7}, "#2"{outer sep=3pt}]}
\newcommand{\arrowts}[2]{\arrow[#1, swap, "|"{marking, scale=0.7}, "#2"{outer sep=3pt}]}

\newcommand{\harrow}[2]{\arrow[\hcolor]{#1}{#2}}
\newcommand{\karrow}[2]{\arrow[\kcolor]{#1}{#2}}

\newcommand{\karrowl}[2]{\arrow[#1, "|"{marking, scale=0.7}, leftarrow, \kcolor, "#2"{outer sep=3pt}]}
\newcommand{\karrowsl}[2]{\arrow[#1, "|"{marking, scale=0.7}, leftarrow, swap, \kcolor, "#2"{outer sep=3pt}]}
\newcommand{\harrows}[2]{\arrow[\hcolor, swap]{#1}{#2}}
\newcommand{\karrows}[2]{\arrow[\kcolor, swap]{#1}{#2}}
\newcommand{\harrowm}[2]{\arrow[#1, \hcolor, "#2" description]}
\newcommand{\karrowm}[2]{\arrow[#1, \kcolor, "#2" description]}

\tikzcdset{cell label/.style={phantom, start anchor=center, end anchor=center}}

\newcommand{\hcell}[2]{\arrow[\hcolor, cell label]{#1}{#2}}
\newcommand{\kcell}[2]{\arrow[\kcolor, cell label]{#1}{#2}}

\newcommand{\cell}[2]{\arrow[cell label]{#1}{#2}}

\newcommand{\hcorner}[4]{#1 \harrow{r}{#2} \arrow[swap]{d}{#3} \hcell{dr}{#4}}
\newcommand{\kcorner}[4]{#1 \karrow{r}{#2} \arrow[swap]{d}{#3} \kcell{dr}{#4}}
\newcommand{\corner}[4]{#1 \arrow{r}{#2} \arrow[swap]{d}{#3} \cell{dr}{#4}}

\newcommand{\cone}{\omega_1}
\newcommand{\ctwo}{\omega_2}
\newcommand{\cthree}{\omega_3}
\newcommand{\cfour}{\omega_4}

\newcommand{\wone}{W1}
\newcommand{\wtwo}{W2}
\newcommand{\wthree}{W3}
\newcommand{\wfour}{W4}
\newcommand{\wfive}{W5}

\title{Composing Double Categories}
\author{Mitchell Buckley}
\email{mitch.buckley@gmail.com}
\author{Richard Garner}
\email{richard.garner@mq.edu.au}
\address{Centre of Australian Category Theory, Macquarie University,
  NSW 2109, Australia}

\begin{document}

\begin{abstract}
  Many kinds of generalised syntax, including internal categories,
  $\mathsf{T}$-categories or polynomial monads, arise as monads in
  bicategories of a certain flavour: maps are some kind of span, and
  composition is some kind of pullback. We explain the pattern behind
  these similarities: each bicategory is stitched together from two
  simpler pieces by a distributive law, where the distributivity is
  governed by pullback.

  A key novelty is in the kind of distributive law we consider; where
  Rosebrugh and Wood considered distributive laws between categories,
  we instead consider pseudo-distributive laws between double
  categories. A key construction, drawing on work of Weber, obtains
  such a distributive law from a so-called \emph{co-crossed} double
  category: one whose codomain functor is a fibration, and whose
  cartesian cells are stable under horizontal composition.

  \looseness=-1 We show the horizontal Kleisli double category of a
  monad $\mathsf{T}$ is co-crossed just when $\mathsf{T}$ is
  cartesian, and that the composite double category recovers
  $\mathsf{T}$-spans; that the double category associated to an
  identity-on-objects functor is co-crossed just when that functor is
  a calibration in the sense of B\'enabou; and that Street's
  calibrated bicategories yield co-crossed double categories in a
  similar manner, recovering the double category of polynomials of
  Gambino--Kock.
\end{abstract}

\dedicatory{Dedicated to Ross Street on the occasion of his 80th birthday}

\maketitle

\section{Introduction}

This paper was born out of questions relating to syntax in computer
science: an apparently unpropitious start for a paper dedicated to
Ross, in whose work ``syntax'' and ``computer science'' can hardly be
said to be prominent themes. However, if we ask not about syntax
itself, but about the different \emph{kinds of syntax} we may work
with, then our dedication becomes much more plausible. While
common-or-garden syntax is encoded by free monads on sets, more
nuanced kinds use notions such as operads, polynomial monads, or the
$\mathsf{T}$-categories of Burroni~\cite{Burroni1971T-categories}---structures whose common feature
is that they are expressible as monads in a suitable
\emph{bicategory}.

Thus, we have the bicategory $\Span(\E)$ of spans in a
category with pullbacks, wherein monads are internal categories; the
bicategory of \emph{$\mathsf{T}$-spans}~\cite{Burroni1971T-categories,
  Leinster2004Higher}, wherein monads are $\mathsf{T}$-categories; and
the bicategory of \emph{polynomials} in a locally cartesian closed
category~\cite{GK2013}, whose monads are polynomial monads. In each
case, a $1$-cell is a kind of span, and composition is a kind of
pullback; yet the details seem to pull against this high-level
similarity. For example, composing $\mathsf{T}$-spans involves a
pullback followed by the multiplication of $\mathsf{T}$, while
composing polynomials involves a \emph{distributivity pullback}, built
using dependent products.

Our goal is to explain how all these apparent differences dissolve
with the right viewpoint: we will see that each of our candidate
bicategories can be stitched together from simpler pieces using the
\emph{distributive laws} of Beck~\cite{Beck1969Distributive}, where in
every case, the distributions in question are the pullbacks noted
above.

For monads $\mathsf{S}$ and $\mathsf{T}$ on a category $\C$, a
distributive law comprises a natural transformation
$\delta \colon TS \Rightarrow ST$, compatible with the units and
multiplications of $\mathsf{S}$ and $\mathsf{T}$; Beck's theory then
asserts, among other things, that the composite functor $ST$
underlies a \emph{composite monad} $\mathsf{ST}$. The motivating
example takes $\mathsf{S}$ to be the monad for abelian groups (written
additively), $\mathsf{T}$ the monad for monoids (written
multiplicatively), and for $\delta \colon TS \Rightarrow ST$, the
operation which rewrites a product of sums as a sum of products. The
composite monad $\mathsf{S}\mathsf{T}$ is then the monad for rings.

As Ross noted in~\cite{Street1972The-formal}, the theory of
distributive laws can be worked out for monads in any bicategory; and
in the very pretty paper~\cite{rosebrugh_distributive_2002}, Rosebrugh
and Wood do this for monads in the bicategory $\Span(\Set)$. In this
bicategory, monads on $O$ are categories with set of objects $O$, and
a distributive law between two such monads $\E$ and $\M$ is an
operation which \emph{commutes $\M$-maps past $\E$-maps}:
\begin{equation}
  \label{eq:swap}
  \begin{tikzcd}
    A  \harrow{r}{m \in \M} & B \karrow{d}{e \in \E}\\
    & C
  \end{tikzcd} \quad \qquad \mapsto \qquad \quad \begin{tikzcd}
    A \karrows{d}{{e_m \in \E}} \\
    B' \harrows{r}{{m_e \in \M}} & C\rlap{ .}
  \end{tikzcd}
\end{equation}
In this case, the composite monad $\M\E$ is a category $\C$ with the
same object-set, in which the morphisms are composable pairs
\begin{equation*}
  \begin{tikzcd}
    A \karrow{r}{e \in \E} & B \harrow{r}{m \in \M} & C\rlap{ ,}
  \end{tikzcd}
\end{equation*}
and composition places the pairs end-to-end, applies~\eqref{eq:swap},
and then composes in $\E$ and $\M$ separately. In this way,
distributive laws in $\Span(\Set)$ allow us to build categories by
stitching together simpler ones; for example, the
category $\Delta_+$ of order-preserving maps between finite ordinals
arises from a distributive law between the categories
$(\Delta_+)_{\mathrm{inj}}$ and $(\Delta_+)_{\mathrm{surj}}$ of
order-preserving \emph{injections}, resp., \emph{surjections}, between
finite ordinals.

The main conceptual contribution of this paper is to show that the
same kind of stitching-together allows us to build the bicategories of
$\mathsf{T}$-spans and of polynomials. For $\mathsf{T}$ a cartesian
monad on a category $\E$ with pullbacks, a $\mathsf{T}$-span from $a$
to $b$ is a span $f \colon Ta \leftarrow x \rightarrow b \colon g$,
which we can decompose as a composite
\begin{equation*}
  \begin{tikzcd}
    & x \arrow[swap]{dl}{f} \arrow{dr}{g} & \\
    Ta & & b
  \end{tikzcd} \qquad \cong \qquad
  \underbrace{\begin{tikzcd}[ampersand replacement=\&]
    \& x \arrow[swap]{dl}{\eta_x} \arrow{dr}{g} \& \\
    Tx \& \& b
  \end{tikzcd}}_{\in \E} \quad \circ \quad 
  \underbrace{\begin{tikzcd}[ampersand replacement=\&]
    \& x \arrow[swap]{dl}{f} \arrow{dr}{1_x} \& \\
    Ta \& \& x
  \end{tikzcd}}_{\in \E_\mathsf{T}}
\end{equation*}
whose two components are specified by a map $g$ in $\E$ and a map $f$
in the Kleisli category $\E_\mathsf{T}$. Thus, composing two
$\mathsf{T}$-spans requires us to commute a map of $\E$ past a Kleisli
map, which we may do by pulling back. In a similar way, a polynomial
$I \leftarrow A \rightarrow B \rightarrow J$ can be seen as a span
$I \leftarrow A \rightarrow B$ of $\E$ followed by a map $B \rightarrow J$ in
$\E$; so that to compose polynomials, we must commute a map past a
span. This, too, can be done by pullback; more precisely, as Ross
showed in~\cite{Street2020}, by a bipullback in the bicategory
$\Span(\E)$.

Now, the distributive laws of~\cite{rosebrugh_distributive_2002} can
only stitch together categories to yield other categories, so to
obtain bicategories, we must generalise. The obvious move is to
observe that bicategories are \emph{pseudomonads} in the weak
$3$-category $\Mat(\Cat)$, whose objects are sets and whose homs are
the $2$-categories $\Cat^{A \times B}$ of $\Cat$-valued matrices, and
to use \emph{pseudo-distributive
  laws}~\cite{cheng_pseudo-distributive_2003,
  Marmolejo1999Distributive} between such pseudomonads. Unfortunately,
this does not help. The pieces we wish to stitch together, such as
$\E$ and $\E_\mathsf{T}$, are mere $1$-categories; and a
pseudo-distributive law between $1$-categories \emph{qua} pseudomonads
in $\Mat(\Cat)$ is necessarily an honest distributive law, as there
are no non-trivial $3$-cells around to make it otherwise.

It turns out that we can get past this obstacle by moving from
bicategories to the (weak) \emph{double categories}
of~\cite{Grandis1999Limits}. These are pseudomonads in the weak
$3$-category $\Span(\Cat)$; they enhance bicategories with a second
direction of $1$-cell, which composes strictly, and a more general
kind of $2$-cell bounded by a square of $1$-cells of both kinds. The
reason this helps is that a $1$-category $\C$ can become a double
category in a non-trivial way as its \emph{double category of squares}
$\Sq(\C)$, whose horizontal and vertical maps are those of $\C$, and
whose cells are commuting squares; and by stitching such double
categories together, we get double categories which are again
non-trivial and in particular, have non-trivial underlying
bicategories.

This ties nicely back to~\cite{rosebrugh_distributive_2002}: therein,
Rosebrugh and Wood show that distributive laws between categories
correspond to \emph{strict} factorisation systems on the composite
category. In asking whether non-strict factorisation systems can be
captured by some weakening of the equations of a distributive law,
they note that they are ``\emph{lacking a higher categorical dimension
  in which to replace equations between transformations by
  isomorphisms}''. Our setting supplies the missing piece, which turns
out to be not a higher dimension, but an \emph{orthogonal} one---
indeed, as we will see below, this extra ingredient is sufficient for
us to capture arbitrary orthogonal factorisation systems in our
setting, to within equivalence.

It is a non-trivial task to describe a double-categorical distributive
law; thus, in order to do so for the bicategories of interest, we
eschew an ad hoc approach, and instead describe a single mechanism
that applies to them all. The input to this mechanism is a
\emph{co-crossed} double category $\C$: one whose codomain functor
$\cod \colon \C_1 \to \C_0$ is a fibration, and whose cartesian cells
are closed under nullary and binary horizontal composition. Given a
horizontal arrow $u \colon c \to b$ and a vertical arrow
$f \colon a \to b$ of $\C$, the cartesian lift of $u$ at $f$ is a cell
\begin{equation*} \begin{tikzcd} d
\arrow{r}{u_f} \arrow[swap]{d}{f_u} \cart{dr} & a \arrow{d}{f} \\ c
\arrow[swap]{r}{u} & b\rlap{ ,} \end{tikzcd} \end{equation*}
whose other edges $u_f$ and $f_u$ are precisely the data needed to commute $f$ past
$u$. In this way, a co-crossed $\C$ yields a distributive law between
(the horizontal opposite of) $\C$ and $\Sq(\C_0)$ (\cref{prop:fibred}); we call
the composite double category the \emph{double category of corners} $\Cnr \C$. For
example, the cartesian cells of $\Sq(\E)$ are the pullback squares, so
that $\Sq(\E)$ is co-crossed just when $\E$ has pullbacks, and its
double category of corners is then the double category 
$\Span(\E)$ (\cref{cor:spans}).

The above construction is a weakening of one due to
Weber~\cite[Construction~5.3.3]{weber2015}, who obtains a distributive
law from a \emph{crossed internal category} in a $2$-category, and
terms its composite the \emph{$2$-category of corners}. When
specialised to the $2$-category $\Cat^{\mathrm{co}}$, Weber's crossed internal
categories are strict double categories whose codomain functor is a
split fibration, with chosen cartesian lifts strictly preserved by
horizontal composition; continuing along the path
of~\cite{stepan2023}, we relax all three conditions. The weakening is
essential; the coherence cells in the composite double categories
involve the comparison isomorphisms between different cartesian lifts,
and when these are non-trivial, a genuine bicategory emerges.

Our first main application is to $\mathsf{T}$-spans. For a monad
$\mathsf{T}$ on $\E$, the \emph{horizontal Kleisli double category}
$\KK$ has the same objects and vertical arrows as $\E$, horizontal
arrows given by Kleisli maps $a \to Tb$, and commuting squares in $\E$
as cells. We show (Theorems~\ref{thm:tspans} and~\ref{thm:T-cart}):
\begin{Thm*}
  Let $\mathsf{T}$ be a monad on a category $\E$. The double category
  $\KK$ is co-crossed just when $\E$ has pullbacks and
  $\mathsf{T}$ is cartesian; the double category of
  corners is then that of $\mathsf{T}$-spans.
\end{Thm*}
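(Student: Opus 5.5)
The plan splits into two parts: first characterise when $\KK$ is co-crossed, then identify its double category of corners with $\mathsf{T}$-spans.

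\textbf{Step 1: cartesian cells in $\KK$.} A cell of $\KK$ with top horizontal arrow $v \colon d \to Ta$, bottom arrow $u \colon c \to Tb$, and vertical arrows $g \colon d \to c$, $f \colon a \to b$ is just a commuting square $Tf \circ v = u \circ g$ in $\E$. I will show that such a cell is $\cod$-cartesian in $\KK_1$ exactly when the square
\begin{equation*}
\begin{tikzcd}
d \arrow{r}{v} \arrow[swap]{d}{g} & Ta \arrow{d}{Tf} \\
c \arrow[swap]{r}{u} & Tb
\end{tikzcd}
\end{equation*}
is a pullback in $\E$. The reason is that a cell into it with fixed top vertical arrow $h$ amounts to a map into $d$ over $c$ whose composite to $Ta$ is prescribed. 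So $\cod$ is a fibration precisely when all pullbacks along maps of the form $Tf$ exist.

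\textbf{Step 2: stability under horizontal composition.} The nullary condition says that the identity cell on $f$, namely the naturality square $T f\circ\eta_a=\eta_b\circ f$, is cartesian; this is exactly cartesianness of $\eta$. Binary composition in $\KK$ pastes the two Kleisli squares, applies $T$ to the right-hand square, and then $\mu$. Pullbacks paste, so composites are cartesian if $T$ preserves pullbacks and the naturality squares of $\mu$ are pullbacks.

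For the converse I will specialise the composition condition to suitable composites. Taking the right cell to be a cartesian lift of a horizontal identity-like arrow along an arbitrary map, and the left one to be the identity cell, should force $T$ to preserve pullbacks and $\mu$ to be cartesian.

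Getting arbitrary pullbacks in $\E$ needs a separate argument. I would take the lift of $\eta_b\circ u$ along $f$ and use the nullary condition (cartesianness of $\eta$) to decompose the resulting pullback of $T f$ against $\eta_b u$. By pasting, this yields a pullback of $f$ along $u$. I expect this extraction of the converse conditions to be the main obstacle, since one must choose test cells cleverly and show the relevant pullbacks exist before asserting they are preserved.

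\textbf{Step 3: corners are $\mathsf{T}$-spans.} By the construction of $\Cnr\C$ via \cref{prop:fibred}, a horizontal arrow of $\Cnr\KK$ is a composite of a Kleisli map $a \to Tx$ with a map $x \to b$ of $\E$, that is, a $\mathsf{T}$-span $Ta \leftarrow x \to b$. Its cells are the corresponding maps of spans. Composition commutes the $\E$-map past the Kleisli map using the cartesian lift, which by Step 1 is a pullback against $T(-)$; it then composes Kleisli maps using $\mu$. This is exactly Burroni's composition of $\mathsf{T}$-spans.

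It then remains to check that the coherence isomorphisms agree, which follows because both sides are built from universal properties of pullbacks. I would package this as an identity-on-objects equivalence of double categories to the double category of $\mathsf{T}$-spans, defined on horizontal arrows by the decomposition displayed in the introduction.
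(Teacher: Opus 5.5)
Your outline matches the paper's proof: cartesian cells of $\KK$ are exactly pullbacks in $\E$ (\cref{lem:cod-cartesian-pullbacks}); the forward direction pastes the first square, $T$ of the second, and a $\mu$-naturality square; the nullary condition is cartesianness of $\eta$; pullbacks in $\E$ come from lifting $\eta_z t$ at $v$ and cancelling the $\eta$-naturality pullback; and the corners are read off from \eqref{eq:hcomp-bowtie}. One slip in Step 3: the Kleisli component of a horizontal arrow $a \tor b$ of the corners is a map $x \to Ta$, i.e.\ an arrow $x \tor a$ of $\KK$ and hence $a \tor x$ of $\KK^{\mathrm{hop}}$. It is not a map $a \to Tx$.

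The gap is where binary composition ``should force'' $\mu$ to be cartesian and $T$ to preserve pullbacks. You never fix the test cells. On the natural reading of ``the identity cell'' as a horizontal identity cell, the argument yields nothing, since $\eta_a \ast u = \mu_b \circ Tu \circ \eta_a = u$; the composite is just the other cell again.

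The missing idea is the Kleisli counit $\epsilon_x \colon Tx \tor x$, the Kleisli arrow given by $1_{Tx}$. For any $v$, its cell with vertical edges $Tv$ and $v$ is an identity square in $\E$, hence a pullback and so cartesian. Precomposing with $\epsilon$ applies $T$ and then $\mu$, which gives both conditions:
\begin{itemize}
\item For $\mu$: $\epsilon_{Tx} \ast \epsilon_x = \mu_x$. Composing the $\epsilon$-cells over $(T^2v, Tv)$ and $(Tv, v)$ therefore yields a cartesian cell whose underlying square is the $\mu$-naturality square at $v$, which is thus a pullback.
\item For $T$: $\epsilon_w \ast (\eta_x \hat g) = \mu_x \circ T\eta_x \circ T\hat g = T\hat g$. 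Given a pullback $g\hat v = v\hat g$, compose the $\epsilon_w$-cell over $(T\hat v, \hat v)$ with the cell $\eta_x \hat g$ over $\eta_z g$, with vertical edges $\hat v$ and $v$. The latter is cartesian because it is the given pullback pasted with the $\eta$-naturality pullback at $v$. The composite is a cartesian cell whose underlying square is the $T$-image of the given pullback.
\end{itemize}
The second item uses cartesianness of $\eta$, so it must come after the nullary argument, as in \cref{thm:T-cart}.
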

\noindent The
``only if'' is notable: it tells us that the hypotheses usually imposed so
that $\mathsf{T}$-spans compose associatively are \emph{exactly} those which
make our mechanism apply.

Our second main application is to polynomials in a locally cartesian
closed category $\E$, which will be obtained by applying our mechanism
to a suitable double category of spans in $\E$. In the early stages of
this work, we defined this double category and verified its
co-crossedness by hand; however, the second author then recalled some
recent work of Ross~\cite{Street2020}, which constructed the
bicategory of polynomials as a bicategory of ``spans in spans''. A
central organising principle in this work is the fact that $\Span(\E)$
is an example of what Ross terms a \emph{calibrated bicategory}, in a
nod to B\'enabou's notion of a \emph{calibrated
  category}~\cite{benabou1975theories}. We were surprised to find that
our theory and the theory of calibrations fit together like a glove.

To explain how, we start one dimension down. A
calibration of a category $\M$ is an identity-on-objects functor
$J \colon \A \to \M$, the maps in whose image we term \emph{neat},
such that:
\begin{enumerate}[label=(R\arabic*),itemsep=0em]
\item neat maps are left-cancellable: if $q$ and $qp$ are neat, then
  so is $p$;
\item neat maps are stable under pullback along arbitrary maps of $\M$.
\end{enumerate}
(We are being slightly loose here for the sake of a clean exposition;
see \cref{def:calibration} for the details).
Associated to $J$ is a double category $\mathsf{H}J$
whose vertical arrows are the neat maps, whose horizontal arrows are
all maps of $\M$, and whose cells are commuting squares; and we have:
\begin{Thm*}
  An identity-on-objects $J \colon \A \to \M$ is a calibration
  if and only if $\mathsf{H}J$ is co-crossed.
\end{Thm*}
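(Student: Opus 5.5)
We prove both directions by unwinding what co-crossedness of $\mathsf{H}J$ means, and matching it against (R1) and (R2). Recall the shape of $\mathsf{H}J$: objects are those of $\M$; vertical arrows are neat maps, so arrows of $\A$; horizontal arrows are arbitrary maps of $\M$; cells are commuting squares in $\M$. Thus $\C_0 = \A$ and $\C_1$ has objects the maps $u$ of $\M$, with $\cod(u \colon c \to b) = (c,b)$ when both endpoints are recorded. A cell over a pair of vertical maps $(g,f)$ is a commuting square $f\,u' = u\,g$ with $f,g$ neat.

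The first step is to characterise the cartesian cells for $\cod \colon \C_1 \to \C_0\times\C_0$ (or whichever projection the paper fixes as the codomain functor). I will show that a commuting square
\[ d \xrightarrow{u_f} a,\quad f_u \colon d \to c,\quad f \colon a\to b,\quad u \colon c\to b \]
with $f,f_u$ neat is cartesian exactly when it is a pullback square in $\M$ whose comparison maps are neat. Concretely, take any other cell $(x \xrightarrow{v} a', g,h)$ over a factorisation through the codomain. We need a unique filler whose vertical component is neat. Uniqueness and existence of the underlying map come from the pullback property in $\M$. Neatness of the filler is then forced by (R1): its composite with the neat leg $f_u$ is neat. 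This gives the ``if'' direction. Cartesian lifts exist by (R2), since the pullback of $u$ along the neat map $f$ exists and has neat leg $f_u$. Closure under horizontal composition is the pasting lemma for pullbacks. Closure under nullary composition holds because identity squares on a neat $f$ are trivially pullbacks.

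The converse is the main obstacle. Suppose $\mathsf{H}J$ is co-crossed. For (R2), take the cartesian lift of $u$ at a neat $f$. We must show this square is an actual pullback in $\M$, not merely universal among squares with neat verticals. The plan is to use that identity horizontal cells are cartesian (nullary closure). Horizontal composition of the lift with cartesian identity cells, combined with testing the universal property against squares whose vertical side is an identity, should recover the pullback property against arbitrary cones. The legs ranging through arbitrary maps of $\M$ appear as horizontal arrows, and the vertical side of the test cone can be chosen to be $1_d$, which is neat.

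For (R1), suppose $q$ and $qp$ are neat. Form the cell with horizontal side $p$ and vertical sides $qp$ and $q$, or the identity-on-$q$ cell, which is cartesian. Factor a suitable cell through it; the factorisation's vertical component is then neat and must equal $p$. Getting the precise form of (R1) and (R2) stated in \cref{def:calibration} to match is the delicate bookkeeping I expect to dominate the proof.
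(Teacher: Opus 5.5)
Your forward direction and your argument for \ref{Rlc} in the converse follow the paper's proof. For \ref{Rlc}, you factor the square with top $p$, left edge $r$, right edge $q$ and bottom $\id_z$ through the cartesian horizontal identity cell on $q$.

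The genuine gap is \ref{Rpb} in the converse. Composing the lift $(\tilde n,\tilde p)$ of $n$ at $p$ with horizontal identity cells returns the same cell, since $\mathsf{H}J$ is strict. So no identity cell can bring a non-neat leg $m \in \M(v,y)$ of a cone $(m,o)$, with $p_*o = nm$, into play. As you rightly note, $m$ has to sit in the \emph{bottom} boundary: the cone is the cell with top $o$, left edge $\id_v$, right edge $p$ and bottom $nm$. To factor that cell you need a cartesian cell with bottom $nm$ and right edge $p$. The only way to get one is to take the cartesian lift $(\tilde m,\tilde{\tilde p})$ of $m$ at $\tilde p$ and compose it horizontally with the lift of $n$ at $p$. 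This is exactly where \emph{binary} closure under horizontal composition is needed, and your plan never invokes it. Factoring then gives $t \in \A(v,u)$ with $\tilde{\tilde p}t = \id_v$ and $\tilde n\tilde m t_* = o$, so $h \defeq \tilde m t_*$ is a mediating map. Uniqueness takes a second step. Regard a competitor $h'$ as the cell with top $h'$, left edge $\id_v$, right edge $\tilde p$ and bottom $m$, and factor it through the lift of $m$ at $\tilde p$ to get $h' = \tilde m t'_*$. Then $t' = t$ by the first uniqueness, so $h' = h$.

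The nullary route cannot be repaired. Take $\mathsf{T}$ to be the free commutative monoid monad on $\Set$. Its unit is cartesian and $\Set$ has pullbacks, so by \cref{lem:cod-cartesian-pullbacks} the codomain functor of $\KK = \mathsf{H}(F_{\mathsf{T}})$ is a fibration and horizontal identities are cartesian. However, $\mu$ is not cartesian: $[[a,b],[c]]$ and $[[a,c],[b]]$ in $T^2\{a,b,c\}$ have the same images in $T\{a,b,c\}$ and in $T^2\{\ast\}$. So $\KK$ is not co-crossed by \cref{thm:T-cart}, and by your own forward direction $F_{\mathsf{T}}$ is not a calibration. Since \ref{Rlc} does hold here, it is \ref{Rpb} that fails. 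In short, nullary closure yields \ref{Rlc}, while \ref{Rpb} needs the fibration together with binary closure.

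Two smaller points:
\begin{itemize}
\item $\cod$ is the functor $\C_1 \to \C_0$ recording only the right-hand vertical edge. Cartesian cells for $(\dom,\cod)$ would be restriction cells, which is a different notion.
\item In the forward direction you must also show that \emph{every} cartesian cell is a pullback in $\M$, not just the ones supplied by \ref{Rpb}. This holds because any cartesian cell differs from the \ref{Rpb}-pullback by a vertically invertible cell. Only then does the pasting lemma give closure of all cartesian cells under horizontal composition.
\end{itemize}
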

\noindent (See \cref{lemma:cocrossd-calib1}.) In particular,
$\mathsf{T}$ is cartesian and $\E$ has pullbacks just when the free
functor $\E \to \E_\mathsf{T}$ is a calibration
(\cref{thm:cart-equiv}).

When $J$ is a calibration, we can construct the double category of
corners of the associated co-crossed $\mathsf{H}J$; this will be a
bicategory of spans with one neat leg, where (R2) allows horizontal
composition to proceed. For example, the bicategories of partial maps,
of topological correspondences~\cite{Miller2024}, and of spans of
categories with left leg a discrete fibration, all arise from
calibrations.

There is something quite striking about the correspondence between
calibrations and co-crossed double categories. A priori, only
condition (R2) is needed to form the bicategory of spans with one neat
leg; (R1) seems incidental, and rather strong. However, when
transported to the double category $\mathsf{H}J$, (R1) becomes
precisely the condition that its horizontal identity maps are
cartesian. Thus, the extra condition on a calibration identifies
precisely when the composition of spans with one neat leg arises as
the composite of a distributive law.

Ross's calibrated bicategories are an analogue of B\'enabou's
calibrations, one dimension up. $\M$ becomes a bicategory and $J$ an
identity-on-objects pseudofunctor; pullbacks become bipullbacks; and
there is (loosely speaking) only one extra condition, namely, that
neat maps be \emph{groupoid fibrations} in $\M$. As before, we are
then able to show that for such a $J$, the double category
$\mathsf{H}J$ is co-crossed (\cref{prop:HHcocrossed}). The motivating
example is the calibration of $\Span(\E)$ by $\E$, for $\E$ locally
cartesian closed, whose bipullbacks are constructed using dependent
products, just as in the composition of polynomials; and using this, we
obtain:

\begin{Thm*} For $\E$ locally cartesian closed, the
identity-on-objects pseudofunctor $(\thg)_* \colon \E \to \Span(\E)$
sending $f$ to $(\id, f)$ is a calibration, and the double category of
corners of $\mathsf{H}((\thg)_*)$ is equivalent to the double category
$\Poly(\E)$ of polynomials in $\E$. \end{Thm*}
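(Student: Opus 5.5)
The plan has two halves: first show that $(\thg)_*$ is a calibrated bicategory in Street's sense, then identify the double category of corners of $\mathsf{H}((\thg)_*)$ with $\Poly(\E)$. Once calibration is established, the general result for calibrated bicategories (\cref{prop:HHcocrossed}) tells us that $\mathsf{H}((\thg)_*)$ is co-crossed. So the corners construction applies, and it remains to compute its output.

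\textbf{Calibration.} I will verify Street's axioms directly.
\begin{enumerate}
\item The functor $(\thg)_*$ is identity-on-objects, locally fully faithful and pseudofunctorial. Composites of spans $(\id,f)$ are computed by trivial pullbacks, and $2$-cells between spans $(\id,f)$ and $(\id,g)$ are forced to be identities when $f=g$.
\item Neat maps $(\id,f)$ are groupoid fibrations in $\Span(\E)$. This should be checked representably: for a span $X$, maps of spans into $(\id, f)$-composites are determined by their domains, so the hom-categories of the relevant fibres are discrete.
\item Cancellation: if $q_*$ and $q_*\circ p$ are neat up to iso, then $p$ is neat up to iso. A span $p=(s,t)$ with $(s, qt)$ isomorphic to some $(\id,g)$ forces $s$ invertible, since $s$ is a leg of a span isomorphic to one with identity leg.
\item Bipullback of a neat $f_*\colon A\to B$ along an arbitrary span $(s,t)\colon C\leftarrow X\to B$. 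I will construct this using the distributivity pullback for $f$ and $t$. Form $\Pi_f$ of $X\times_B A\to A$ to get $g\colon Y\to X$, yielding a neat $(sg)$-type map into $C$ together with a span back to $A$. The universal property of $\Pi_f$ (adjunction $f^*\dashv\Pi_f$) then gives the bipullback property, exactly as in Street's ``spans in spans'' paper~\cite{Street2020}, which I will cite for the details.
\end{enumerate}
Stability of these bipullbacks, and the fact that bipullbacks of neat maps are again neat, follow from the construction.

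\textbf{Corners.} The double category $\Cnr\,\mathsf{H}J$ has objects those of $\E$. Its vertical maps are the vertical maps of $\mathsf{H}J$, namely neat maps, i.e.\ maps of $\E$, as in $\Poly(\E)$. Its horizontal maps are corners: a neat arrow followed by a horizontal arrow of $\mathsf{H}J$. Unpacked, such a corner is a map $B\to J$ in $\E$ together with a span $I\leftarrow A\to B$, i.e.\ exactly a polynomial $I\leftarrow A\to B\to J$. I will then match cells of the corners construction (cells of $\mathsf{H}J$ between corners modulo the appropriate equivalence) with the cells of $\Poly(\E)$ of Gambino--Kock, which are squares built from a pullback middle square.

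Finally, I will compare horizontal composition. In $\Cnr$, composition commutes the neat map past the span via the chosen cartesian lift, which is the bipullback above, i.e.\ the distributivity pullback. This is precisely the Gambino--Kock composition of polynomials. The coherence isomorphisms on both sides come from universal properties, so they agree.

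\textbf{Main obstacle.} I expect the hardest step to be the identification of cells. Cells in $\Poly(\E)$ are only defined up to the middle map being cartesian, whereas cells of $\Cnr$ are equivalence classes, since the corners construction involves quotienting or coends over composable pairs. Because of this mismatch, I will aim to produce a pseudo double functor $\Cnr\,\mathsf{H}J\to\Poly(\E)$ that is the identity on objects and vertical maps and bijective on horizontal maps up to iso. I will then check full faithfulness on cells by a direct pullback argument, rather than attempting an isomorphism of double categories.
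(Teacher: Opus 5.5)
Your overall architecture matches the paper's: verify (C1)--(C3), invoke \cref{prop:HHcocrossed}, unpack $\Cnr\,\mathsf{H}((\thg)_*)$ and compare it with $\Poly(\E)$. The gap is in step~(4), the bipullback, which is the one step with real mathematical content: your construction of it is wrong.

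For a neat $f_*\colon A\to B$ and a span $n=(s,t)\colon C\leftarrow X\to B$, the dependent product must be taken along the \emph{left leg $s$ of the span}, not along the neat map $f$. First pull $f$ back along $t$ to get $\Delta_t f\colon X\times_B A\to X$. The neat leg is then $\tilde p=\Pi_s\Delta_t f\colon Y\to C$. The other leg $\tilde n$ is the span $Y\leftarrow Y\times_C X\to A$, whose right leg is the counit $\epsilon$ of $\Delta_s\dashv\Pi_s$ followed by the projection to $A$. On objects this is forced: a span $\tilde n$ with $f_*\tilde n\cong n\tilde p_*$ amounts to a map $\Delta_s\tilde p\to\Delta_t f$ in $\E/X$, i.e.\ a map $\tilde p\to\Pi_s\Delta_t f$ in $\E/C$.

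Your recipe has three problems:
\begin{itemize}
\item applying $\Pi_f$ to $X\times_B A\to A$ does not in general produce a map $g\colon Y\to X$, since $\Pi_f$ lands in $\E/B$;
\item it uses the wrong adjunction, $f^*\dashv\Pi_f$ instead of $\Delta_s\dashv\Pi_s$;
\item it produces a $\Sigma_s$-type leg $s\circ g$ where a $\Pi_s$ is needed.
\end{itemize}
Concretely, take $X=B$ and $t=\id$, so that $n=s^*$. Your recipe gives $\Pi_f(\id_A)=\id_B$ and hence neat leg $s$. In $\Set$ with $C=1$, $B=2$ and $f\colon 1\to 2$, the composite $s^*s_*$ has surjective right leg $2\times 2\to 2$. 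Every $f_*\tilde n$, by contrast, has right leg inside the image of $f$, so your square does not even commute up to isomorphism.

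The paper follows Street's Prop.~8.3 instead. It writes $n\cong t_*s^*$ and uses the ordinary pullback for $t_*$ (Street's Prop.~5.4). For $s^*$ it uses the distributivity pullback around $X\times_B A\xrightarrow{\Delta_t f}X\xrightarrow{s}C$ (Street's Prop.~7.1), and pastes the two squares. Citing that result would repair your step, but your later claim that composition in $\Cnr$ is Gambino--Kock composition depends on exactly this square.

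Your ``main obstacle'' rests on a misreading of the composite. In $\B\bt\A$ the cells are just formally composable pairs, $(\B\bt\A)_1=\B_1\times_{\X_0}\A_1$, with no quotient or coend. In the notation of~\eqref{polycell} (where $f$ is now the middle map of the polynomial), a cell of $\Cnr\,\mathsf{H}((\thg)_*)$ is literally a $2$-cell $\kappa\colon(f',s')\,q_*\Rightarrow p_*(f,s)$ of $\Span(\E)$ together with a commuting square $rt=t'q$ in $\E$. The span $(f',s')\,q_*$ has apex the pullback $X=B\times_{B'}A'$, so $\kappa$ is exactly a map $m\colon X\to A$ with $fm=\pi_1$ and $psm=s'\pi_2$. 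Hence horizontal arrows and cells correspond bijectively on the nose, and the equivalence only has to absorb the choice of bipullbacks in horizontal composition. Also, a horizontal arrow of $\Cnr$ is a reversed horizontal arrow of $\mathsf{H}J$ followed by a neat map, not the other way round, though your unpacking as a polynomial is correct.

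Two smaller points:
\begin{itemize}
\item \cref{prop:HHcocrossed} needs the paper's (C1)--(C3), not Street's axioms. (C1) is a bipullback condition in $\Cat$, stronger than left cancellation. Here it does follow from local full faithfulness together with your step~(3), as in \cref{rem:C1}, but you should say so.
\item In step~(2), discrete fibres alone do not make a functor a groupoid fibration. The point is that postcomposition with $f_*$ is isomorphic to $(m_0,m_1)\mapsto(m_0,fm_1)$, which is a discrete fibration.
\end{itemize}
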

\noindent (See \cref{prop:spans-calibration,prop:poly}.) The same
mechanism also reconstructs the other main examples
from~\cite{Street2020} (\cref{ex:rel,ex:mod}).

We note two limits to our results. First, unlike in the categorical
case, the converse of \cref{prop:HHcocrossed} fails: $\mathsf{H}J$
may be co-crossed without $J$ being a bicategorical calibration
(\cref{rem:HHconverse}). Second, Ross constructs polynomials not only
in locally cartesian closed categories, but in any category with
pullbacks, at the cost of restricting the permitted spans to his
\emph{right lifters}; our framework could accommodate this, as we
explain in \cref{rem:Jaxioms}, but we do not pursue this here.

\textbf{Overview}. In \cref{sec:double-categories}, we recall double
categories and their presentation as pseudomonads in $\Span(\Cat)$. In
\cref{sec:distributive_laws} we define distributive laws between
double categories as pseudo-distributive laws between the
corresponding pseudomonads, and spell this out elementarily; then in
\cref{sec:composing} we describe the composite double category arising
from such a distributive law. \cref{sec:factorisation} gives our
zeroth example, showing how a factorisation system on a category $\C$
yields a distributive law of double categories from which $\C$ can be
recovered to within equivalence.

\cref{bridgeable} introduces co-crossed double categories and the
canonical distributive law associated to one. \cref{sec:tspans}
applies this to the horizontal Kleisli double category of a monad, so
recovering $\mathsf{T}$-spans and proving our first main theorem.
\cref{sec:calibrated_cats} then proves our second main result,
identifying co-crossed double categories of the form $\mathsf{H}J$
with B\'enabou's calibrations. In \cref{sec:calibr-bicat} we turn to
Ross's calibrated bicategories, showing that each such calibration $J$
makes an appropriately modified $\mathsf{H}J$ co-crossed, and
explaining why the converse fails. Finally,
\cref{sec:polynomial_spans} recasts the remaining theory
of~\cite{Street2020} in our setting, showing that when $\E$ is locally
cartesian closed, $\Span(\E)$ has a canonical calibration, and that
the resulting double category of corners is $\Poly(\E)$; we also
reconstruct the further examples from~\cite{Street2020}.

\textbf{Acknowledgements}. Thanks to John Bourke for helpful comments
and questions after the first author's talk at Category Theory 2026.

\textbf{AI Use}. Both authors used Anthropic's Claude Opus while
preparing this paper. Most use was routine: recasting existing proofs
under changed hypotheses or conventions, regularising notation across
sections, writing \texttt{tikz-cd} diagrams and macros, drafting
routine exposition or verification from our sketches, and fixing
typos. A couple of uses went further and may be of more interest.
Firstly, the counterexample of \cref{rem:HHconverse} was proposed by
Claude, and verified by exhaustive computer search; we recast it in
the form given. (As chance would have it, it was one of Ross's
orientals.) Secondly, some open-ended discussions helped us clarify
and strengthen the point of view espoused in the introduction.

\section{Double categories}\label{sec:double-categories}

In this section we recall the (weak) double categories
of~\cite{Grandis1999Limits} and set out our notation, which generally
follows that of~\cite[Section~2]{shulman_framed}; we then present
weak double categories as pseudomonads in the weak $3$-category
$\Span(\Cat)$. It is this second description that will allow us to
speak of distributive laws between double categories in the next section.

\subsection{Elementarily}
We drop the qualifier `weak' and write simply `double category',
adding the word `strict' when horizontal composition is strictly
associative and unital.

\begin{definition}
  \label{def:double_category}
  A \emph{$\Cat$-graph} $\A$ is a span in $\Cat$:
  \[
    \begin{tikzcd}
      & \A_1 \arrow[swap]{dl}{\dom} \arrow{dr}{\cod} & \\
      \A_0 && \A_0\rlap{ .}
    \end{tikzcd}
  \]
  We refer to objects and morphisms of $\A_0$ as \emph{objects} and
  \emph{vertical arrows} of $\A$, and refer to objects and
  morphisms of $\A_1$ as \emph{horizontal arrows} and
  \emph{cells} of $\A$. We draw cells as squares
  \[
    \begin{tikzcd}
      a \arrow{r}{f} \arrow[swap]{d}{s} \cell{dr}{A} & 
      b \arrow{d}{t} \\
      c \arrow[swap]{r}{g} & d\rlap{ ,}
    \end{tikzcd}
  \]
  where here $A \colon f \to g$ in $\A_1$, $\dom A = s \colon a \to c$
  in $\A_0$ and $\cod A = t \colon b \to d$ in $\A_0$. The category
  structure of $\A_0$ provides vertical arrows with identities
  and an associative composition, while the category structure of
  $\A_1$ provides the same for the cells; we depict this latter
  composition as:
  \begin{equation*}
    \begin{tikzcd}
      \corner{a}{f}{s}{A} & 
      a' \arrow{d}{s'} \\
      \corner{b}{g}{t}{C} & 
      b' \arrow{d}{t'} \\
      c \arrow[swap]{r}{h} & c'
    \end{tikzcd} \qquad \mapsto \qquad 
    \begin{tikzcd}
      \corner{a}{f}{ts}{CA} & 
      a' \arrow{d}{t's'} \\
      c \arrow[swap]{r}{h} & c'
    \end{tikzcd}
  \end{equation*}
  where the vertical edges compose in $\A_0$.
  We refer to composition in $\A_0$ and $\A_1$ alike as
  \emph{vertical composition}, and the identities as \emph{vertical
    identities}.

  A \emph{double category} $\A$ is a $\Cat$-graph together with
  functors
  \begin{align*}
    \id \colon & \A_0 \to \A_1 & 
    * \colon & \A_1 \,{}_{\cod}\!\!\times_{\dom} \A_1 \to \A_1
  \end{align*}
  satisfying $\dom \circ \id = 1_{\A_0} = \cod \circ \id$ and
  $\dom \circ \mathord{*} = \dom \circ \pi_1$ and $\cod \circ
  \ast = \cod \circ \pi_2$, together with coherences to be specified
  shortly. This provides a composition law
  for horizontal arrows and for cells, as depicted in:
  \[
    \begin{tikzcd}
      \corner{a}{f}{s}{A} & 
      \corner{a'}{f'}{s'}{B} & 
      a'' \arrow{d}{s''} \\
      b \arrow[swap]{r}{g} & b' \arrow[swap]{r}{g'} & b''
    \end{tikzcd} \qquad \mapsto \qquad
    \begin{tikzcd}
      \corner{a}{f \ast f'}{s}{A \ast B} & 
      a'' \arrow{d}{s''} \\
      b \arrow[swap]{r}{g \ast g'} & b''\rlap{ .}
    \end{tikzcd}
  \]
  We call this \emph{horizontal composition}, with the corresponding
  identities being \emph{horizontal identities}.
  Following~\cite{shulman_framed} we write horizontal composition
  `forwards', so that the composite on the right above is $A \ast B$,
  not $B \ast A$. Horizontal composition in a double category is
  required to be associative and unital only up to specified natural
  isomorphisms $\alpha, \rho, \lambda$, whose components in $\A_1$ are
  \emph{globular cells}---cells whose vertical arrows are
  identities---of the form:
  \begin{equation*}
    \begin{tikzcd}[column sep = 4em]
      \corner{a}{f \ast (f' \ast f'')}{1}{{\alpha_{f,f',f''}}} & 
      a''' \arrow{d}{1} \\
      a \arrow[swap]{r}{(f \ast f') \ast f''} &
      a'''
    \end{tikzcd} \qquad
    \begin{tikzcd}[column sep = 4em]
      \corner{a}{f}{1}{\lambda_f} &
      a' \arrow{d}{1} \\
      a \arrow[swap]{r}{\id \ast f} &
      a'
    \end{tikzcd}
    \qquad \text{and} \qquad
    \begin{tikzcd}[column sep = 4em]
      a \idarr{d} \arrow{r}{f  \ast \id} \cell{dr}{\rho_f} &
      a'\idarr{d} \\ a \arrow[swap]{r}{f} &
      a'
    \end{tikzcd}
  \end{equation*}
  all subject to the usual axioms for a bicategory or monoidal
  category.
\end{definition}
We will, from the get-go, overload notation for vertical and
horizontal identities; in all our calculations, it will be clear
which kind of identity we are referring to. We often abbreviate a
horizontal composite $f \ast g$ to $fg$. We sometimes refer to
$\A_0$ as the \emph{category of objects} of $\A$ or the \emph{vertical
  category} of $\A$. A cell is \emph{vertically invertible} when it is
invertible in $\A_1$. The objects, horizontal arrows and globular
cells of $\A$ form a bicategory, the \emph{underlying bicategory}
of $\A$.

A \emph{double functor} $F \colon \A \to \B$ comprises functors
$F_0 \colon \A_0 \to \B_0$ and $F_1 \colon \A_1 \to \B_1$ commuting
with the domain and codomain functors, and strictly preserving
horizontal composition, horizontal identities and the coherence cells
$\alpha, \lambda, \rho$. A \emph{pseudo double functor} preserves
horizontal composition and identities only up to coherent, vertically
invertible globular cells $F(f) \ast F(g) \cong F(f \ast g)$ and
$\id_{Fa} \cong F(\id_a)$. An \emph{identity-on-objects equivalence}
$\A \simeq \B$ of double categories (the only kind we will need)
comprises identity-on-objects pseudo double functors $F \colon \A \to
\B$ and $G \colon \B \to \A$, together with vertical natural
isomorphisms $GF \cong 1$ and $FG \cong 1$ whose components are
globular cells.

\begin{example}\label{ex:sq}
  Any category $\C$ yields a strict double category $\Sq(\C)$, the
  \emph{double category of squares} in $\C$, whose vertical and
  horizontal arrows are both the arrows of $\C$, and whose cells are
  the commuting squares in $\C$.
\end{example}

\subsection{As pseudomonads}

If $\C$ is any category with pullbacks, then we have the bicategory
$\Span(\C)$, whose objects are those of $\C$, whose hom-category
$\Span(\C)(x,y)$ is the category of spans
$x \leftarrow w \rightarrow y$, and whose composition is by pullback.
A monad in $\Span(\C)$ is precisely an internal category in $\C$.

If $\C$ is a $2$-category, and its pullbacks are $2$-pullbacks, then
$\Span(\C)$ can be equipped with $3$-cells
$\alpha \colon f \Rrightarrow g$ between span maps; these involve a
$2$-cell $\alpha \colon f \Rightarrow g$ in $\C$ of the form:
\begin{equation}
  \label{eq:span_cells}
  \begin{tikzcd}[row sep = 3em]
    & \A \arrow[swap]{dl}{s} \arrow{dr}{t}
      \arrow[bend left, ""{name=R, left}]{dd}{g}
      \arrow[swap, bend right, ""{name=L, right}]{dd}{f} & \\
    \X & & \Y \\
    & \B\rlap{ ,} \arrow{ul}{u} \arrow[swap]{ur}{v} &
    \arrow[Rightarrow, from=L, to=R, "\alpha"]
  \end{tikzcd}
\end{equation}
which the legs of the spans see as identities, in the sense that
$u\alpha = 1_s$ and $v\alpha = 1_t$. With these $3$-cells, $\Span(\C)$
becomes a rather tractable kind of weak $3$-category, namely, a
bicategory in which each hom is a $2$-category, composition
is a $2$-functor, and the coherence natural transformations are
$2$-natural; in other words, a \emph{$2\text-\Cat$-enriched
  bicategory} in the sense of~\cite{Bozapalides1976Theorie}.

We may thus consider \emph{pseudomonads} in the weak $3$-category
$\Span(\C)$; these are equally well
pseudomonoids~\cite[\S 3]{Day1997Monoidal}, or in Ross's favoured
nomenclature, \emph{monoidales}, in the endohom monoidal bicategory
$\Span(\C)(X,X)$.
Spelling this out, a pseudomonad on $\X$ in $\Span(\C)$ comprises a
span $\A$ from $\X$ to $\X$, span morphisms
$\mu \colon \A \times_\X \A \to \A$ and $\eta \colon \X \to \A$, and
coherent invertible $2$-cells $\alpha, \rho, \lambda$ of the
shape~\eqref{eq:span_cells} witnessing associativity and unitality.

In particular, when $\C$ is the $2$-category $\Cat$, a pseudomonad
in $\Span(\Cat)$ is precisely a double category in the sense defined
above, with
the coherent invertible $2$-cells $\alpha, \lambda, \rho$ being
precisely the constraints of Definition~\ref{def:double_category};
note that the condition that the legs of the spans see these $2$-cells
as identities expresses precisely the \emph{globularity} of the
double-categorical constraint cells. Thus a double category is a weak
internal category in $\Cat$, just as a monad in $\Span(\C)$ is an
internal category in $\C$.

\section{Distributive laws for double categories}
\label{sec:distributive_laws}

\subsection{Between pseudomonads}
\label{subsec:between_pseudomonads}

Much as distributive laws between monads make sense in any
bicategory~\cite{Street1972The-formal}, so pseudo-distributive laws
between pseudomonads make sense in any weak
$3$-category~\cite{cheng_pseudo-distributive_2003,
  gambino_formal_2021, walker2025}. A pseudo-distributive law relates
two pseudomonads on a single object; so if we fix this object $X$,
then the pseudo-distributive law lives entirely inside the monoidal
endohom bicategory from $X$ to $X$.

\begin{definition}
  \label{def:pseudo_distributive_law}
  Let $S$ and $T$ be pseudomonoids in a monoidal bicategory. A
  \emph{pseudo-distributive law of $S$ over $T$} comprises a $1$-cell
  $\delta \colon S \otimes T \to T \otimes S$ together with invertible
  $2$-cells
  \begin{equation*}
    \begin{tikzcd}[column sep = 3em, row sep = 3em]
      T \arrow{r}{\eta_S \otimes T}
        \arrow[""{name=cone, anchor=center, inner sep=0}, swap]{dr}{T \otimes \eta_S} &
      S \otimes T \arrow{d}{\delta} \\
      & T \otimes S
      \arrow[Rightarrow, from=cone, to=1-2, shorten <=0.6em, shorten >=0.6em, "\cone"]
    \end{tikzcd}
    \qquad\qquad
    \begin{tikzcd}[column sep = 3em, row sep = 3em]
      S \arrow{r}{S \otimes \eta_T}
        \arrow[""{name=ctwo, anchor=center, inner sep=0}, swap]{dr}{\eta_T \otimes S} &
      S \otimes T \arrow{d}{\delta} \\
      & T \otimes S
      \arrow[Rightarrow, from=1-2, to=ctwo, shorten <=0.6em, shorten >=0.6em, "\ctwo"]
    \end{tikzcd}
  \end{equation*}
  \begin{equation*}
    \begin{tikzcd}[column sep = 2.5em, row sep = 3em]
      S \otimes S \otimes T
        \arrow{r}{S \otimes \delta} \arrow[swap]{d}{\mu_S \otimes T}
        \cell{drr}{\Downarrow \cthree} &
      S \otimes T \otimes S \arrow{r}{\delta \otimes S} &
      T \otimes S \otimes S \arrow{d}{T \otimes \mu_S} \\
      S \otimes T \arrow[swap]{rr}{\delta} && T \otimes S
    \end{tikzcd}
    \qquad
    \begin{tikzcd}[column sep = 2.5em, row sep = 3em]
      S \otimes T \otimes T
        \arrow{rr}{S \otimes \mu_T} \arrow[swap]{d}{\delta \otimes T}
        \cell{drr}{\Downarrow \cfour} &&
      S \otimes T \arrow{d}{\delta} \\
      T \otimes S \otimes T \arrow[swap]{r}{T \otimes \delta} &
      T \otimes T \otimes S \arrow[swap]{r}{\mu_T \otimes S} &
      T \otimes S
    \end{tikzcd}
  \end{equation*}
  (in which we have suppressed the unit constraints of $\otimes$),
  subject to five coherence axioms, for which we refer the reader
  to~\cite{walker2025}.
\end{definition}
Applying this in one of the endohoms of $\Span(\Cat)$ yields the main
definition of this paper:
\begin{definition}[Distributive law between double categories]
  \label{def:dist-double}
  Let $\A$ and $\B$ be double categories with the same vertical
  category $\X_0$. A \emph{distributive law of $\A$ over $\B$} is a
  pseudo-distributive law of $\A$ over $\B$ \emph{qua} pseudomonads in
  $\Span(\Cat)(\X_0, \X_0)$.
\end{definition}

Here, in accordance with our double-categorical conventions, we drop
the ``pseudo-'' qualifier and speak simply of \emph{distributive laws}.

\subsection{Elementarily}
\label{subsec:elementarily}

In this subsection, we transcribe \cref{def:dist-double} to set out
the data and axioms of a distributive law of $\A$ over $\B$, where
$\A$ and $\B$ are double categories with common vertical category
$\X_0 = \A_0 = \B_0$:
\begin{equation*}
  \begin{tikzcd}
    & \A_1 \arrow[swap]{dl}{\dom} \arrow{dr}{\cod} & &
    & \B_1 \arrow[swap]{dl}{\dom} \arrow{dr}{\cod} & \\
    \X_0 && \X_0 & \X_0 && \X_0\rlap{ .}
  \end{tikzcd}
\end{equation*}
The first datum for a distributive law is
a functor
\begin{equation}
  \label{eq:deltafunctor}
  \delta \colon \A_1 \,{}_{\cod}\!\!\times_{\dom} \B_1 \longrightarrow
  \B_1 \,{}_{\cod}\!\!\times_{\dom} \A_1\rlap{ .}
\end{equation}
Objects of the domain category $\A_1 \,{}_{\cod}\!\!\times_{\dom} \B_1$
are pairs $(f,u)$, where $f$ and $u$ are horizontal arrows of $\A$,
respectively $\B$, and $\cod(f) = \dom(u)$ in $\X_0$; we display this
to the left in: 
\[\begin{tikzcd}
    a \harrow{r}{f} & b \karrow{r}{u} & c
\end{tikzcd} \qquad\qquad
\begin{tikzcd}
    \hcorner{a}{f}{s}{\sigma} & 
    \kcorner{b}{u}{r}{\tau} & 
    c \arrow{d}{t} \\
    a' \harrows{r}{f'} & 
    b' \karrows{r}{u'} &
    c'\rlap{ .}
  \end{tikzcd}\] Similarly, arrows of
$\A_1 \,{}_{\cod}\!\!\times_{\dom} \B_1$ are pairs of cells from $\A$
and $\B$ that abut along a vertical arrow, as seen to the right above.
We call such pairs of horizontal arrows or cells \emph{formally
  composable}.

The codomain of $\delta$ involves pairs which are formally composable
in the \emph{other} order ($\B$ then $\A$). Thus, $\delta$'s action on
objects and arrows is an assignment:
\begin{equation*}
  \label{eq:dist-assignment}\tag{$\delta$}
  \begin{tikzcd}
    \hcorner{a}{f}{s}{\sigma} & \kcorner{b}{u}{}{\tau} & c \arrow{d}{t} \\
    a' \harrows{r}{f'} & b' \karrows{r}{u'} & c'
  \end{tikzcd}
  \qquad
  \mapsto
  \qquad
  \begin{tikzcd}
    \kcorner{a}{u_f}{s}{\tau_\sigma} &
    \hcorner{\cdot}{f_u}{}{\sigma_\tau} &
    c \arrow{d}{t} \\
    a' \karrows{r}{u'_{f'}} &
    \cdot \harrows{r}{f'_{u'}} &
    c'\rlap{ .}
  \end{tikzcd}
\end{equation*}
Note that the middle vertical arrow on the right is a datum chosen by
the functor; it is $\cod(\tau_\sigma) = \dom(\sigma_\tau)$ and we
leave it unlabelled in all our diagrams. The fact that $\delta$ is a
map of spans over $\X_0$ is visible in the fact that the outer
vertical boundaries $(s,t)$ are unchanged by the assignment. On
objects, $\delta$ is the double-categorical form of the
swap~\eqref{eq:swap} of the introduction, now acting on cells as well
as arrows.

We now describe the coherence isomorphisms $\cone, \dots,
\cfour$. The first of these is an invertible natural transformation
\begin{equation*}
  \begin{tikzcd}[column sep = 4em, row sep = 3em]
    \B_1 \arrow{r}{\id \,{}_{\cod}\!\times_{\dom} \B_1}
    \arrow[""{name=cone, anchor=center, inner sep=0}, swap]{dr}{\B_1 \,{}_{\cod}\!\times_{\dom} \id} &
    \A_1 \,{}_{\cod}\!\!\times_{\dom} \B_1 \arrow{d}{\delta} \\
    & \B_1 \,{}_{\cod}\!\!\times_{\dom} \A_1
    \arrow[Rightarrow, from=cone, to=1-2, shorten <=0.6em, shorten >=0.6em, "\cone"]
  \end{tikzcd}
\end{equation*}
which the span projections see as an identity. This amounts to giving:
\begin{itemize}
\item For each horizontal arrow
  \[\begin{tikzcd}
      a \karrow{r}{u} & b
    \end{tikzcd}\] of $\B$, a pair of vertically invertible, formally composable cells
  \begin{equation*}
    \label{eq:cone}\tag{$\cone$}
    \begin{tikzcd}
      a \karrow{r}{u} \idarr{d} \arrow[cell label, \kcolor]{dr}{\cone} & b \arrow[cell label, \hcolor]{dr}{\cone} \arrow{d} \ar[\hcolor]{r}{\id} & b \idarr{d}\\
      a \karrows{r}{u_{\id}} & \cdot \harrows{r}{\id_u} & b\rlap{ ,}
    \end{tikzcd}
  \end{equation*}
  naturally with respect to vertical composition.
\end{itemize}
We commit a small abuse of notation by naming both of these cells
$\omega_1$; this is harmless since they live in different double
categories. Note that the fact that the span projections see the
natural transformation as an identity is visible in the fact that the
outer vertical boundaries of~\eqref{eq:cone} are identity arrows. By
contrast, the middle vertical arrow is not so constrained: it is part
of the data of $\omega_1$, determined as the vertical arrow along
which the two cells abut. This arrow satisfies its own constraint: its
codomain must be the unnamed object provided
by~\eqref{eq:dist-assignment}.

It is now unproblematic to spell out the data for $\ctwo, \cthree$ and
$\cfour$ in the same manner:

\begin{itemize}
\item For each horizontal arrow
  \[\begin{tikzcd}
      a \harrow{r}{f} & b
    \end{tikzcd}\]
  of $\A$, a pair of vertically invertible, formally composable cells
  \begin{equation*}
    \label{eq:ctwo}\tag{$\ctwo$}
    \begin{tikzcd}
      a \karrow{r}{\id_{f}} \idarr{d} \arrow[cell label, \kcolor]{dr}{\ctwo} & \cdot \arrow[cell label, \hcolor]{dr}{\ctwo} \arrow{d} \ar[\hcolor]{r}{f_{\id}} & b \idarr{d}\\
      a \karrows{r}{\id} & a \harrows{r}{f} & b\rlap{ ,}
    \end{tikzcd}
  \end{equation*}
  naturally with respect to vertical composition.
\item For each diagram
  \[\begin{tikzcd}
      a \harrow{r}{f} & b \harrow{r}{g} & c \karrow{r}{u} & d 
    \end{tikzcd}
  \]
  a pair of vertically invertible, formally composable cells
  \begin{equation*}
    \label{eq:cthree}\tag{$\cthree$}
    \begin{tikzcd}
      a \karrow{r}{(u_g)_f} \idarr{d} \kcell{dr}{\cthree} &
      \cdot \harrow{r}{f_{u_g}} \ar[d] \hcell{drr}{\cthree} & 
      \cdot \harrow{r}{g_u} & 
      d \idarr{d} \\
      a \karrows{r}{u_{fg}} &
      \cdot \harrows{rr}{(fg)_u } && 
      d\rlap{ ,}
    \end{tikzcd}
    \end{equation*}
  naturally with respect to vertical composition;
\item For each diagram
  \[\begin{tikzcd}
      a \harrow{r}{f} & b \karrow{r}{u} & c \karrow{r}{v} & d 
    \end{tikzcd}\]
  a pair of vertically invertible, formally composable cells
  \begin{equation*}
    \label{eq:cfour}\tag{$\cfour$}
    \begin{tikzcd}
      a \karrow{rr}{(uv)_{f}} \idarr{d} \kcell{drr}{\cfour} &&
      \cdot \harrow{r}{f_{uv}} \ar[d] \hcell{dr}{\cfour} 
      & d \idarr{d}\\
      a \karrows{r}{u_f} & \cdot \karrows{r}{v_{f_u}} & \cdot \harrows{r}{(f_u)_v} & d\rlap{ ,}
    \end{tikzcd}
  \end{equation*}
  naturally with respect to vertical composition.
\end{itemize}

Note that in each of $\cone$--$\cfour$, the middle vertical arrow is
part of the data; if, as is often the case, cells in $\A$ and $\B$ are
determined by their boundaries, it is in fact \emph{all} of the data.

Finally, we spell out the five coherence conditions these natural
transformations must satisfy. Our conventions align with those
of~\cite{walker2025}, and so we can read off the shape of these axioms
from \emph{loc.~cit.}: we need only take Walker's $T$ and $P$ to be
our $\A$ and $\B$. Evaluating at components then yields the following
five conditions. Note that each expresses an equality of cells in
$\B_1 \,{}_{\cod}\!\!\times_{\dom} \A_1$, which is equally well a pair
of equalities, one in $\B_1$ and one in $\A_1$.
\begin{itemize}
\item For each diagram
  \[\begin{tikzcd}
      a \harrow{r}{f} & b \karrow{r}{u} & c 
    \end{tikzcd}\]
  we have the equality:
  \begin{equation*}
    \tag{\MakeUppercase{\wone}}
    \label{eq:wone}
    \begin{tikzcd}[column sep = 3.5em]
      a \karrow{r}{{u}_{f}} \idarr{d} \kcell{dr}{(\cone)_f} &
      \cdot \harrow{r}{{f}_{u}} \idarr{d} \hcell{dr}{f_{\cone}} &
      c \harrow{r}{\id} \ar[d] \hcell{dr}{\cone} & 
      c \idarr{d} \\
      a \karrowm{r}{{(u_\id)}_{f}} \idarr{d} \kcell{dr}{\cthree} &
      \cdot \harrowm{r}{{f}_{(u_\id)}} \ar[d] \hcell{drr}{\cthree} &
      c \harrowm{r}{\id_u} & 
      c \idarr{d} \\
      a \karrowm{r}{u_{f \ast \id}} \idarr{d} \kcell{dr}{u_{\rho}} &
      \cdot \harrowm{rr}{(f \ast \id)_u} \idarr{d}  \hcell{drr}{\rho_u} &&
      c \idarr{d} \\
      a \karrows{r}{u_f} &
      \cdot \harrows{rr}{f_u} &&
      c
    \end{tikzcd}
    \ \ =\ \ 
    \begin{tikzcd}
      a \karrow{r}{{u}_{f}} \idarr{d} &
      \cdot \harrow{r}{{f}_{u}} \idarr{d} \hcell{drr}{\rho} &
      c \harrow{r}{\id} & 
      c \idarr{d} \\
      a \karrows{r}{u_f} &
      \cdot \harrows{rr}{f_u} &&
      c\rlap{ .}
    \end{tikzcd}
  \end{equation*}
\item For each diagram
  \[\begin{tikzcd}
      a \harrow{r}{f} & b \karrow{r}{u} & c 
    \end{tikzcd}\]
  we have the equality
  \begin{equation*}
    \tag{\MakeUppercase{\wtwo}}
    \label{eq:wtwo}
    \begin{tikzcd}[column sep = 3em]
      a \karrow{rr}{{u}_{f}} \idarr{d} \kcell{drr}{\lambda_f} &&
      \cdot \harrow{r}{{f}_{u}} \idarr{d} \hcell{dr}{f_\lambda} & 
      c \idarr{d} \\
      a \karrowm{rr}{{(\id \ast u)}_{f}} \idarr{d} \kcell{drr}{\cfour} &&
      \cdot \harrowm{r}{{f}_{\id \ast u}} \ar[d] \hcell{dr}{\cfour} & 
      c \idarr{d} \\
      a \karrowm{r}{{\id}_{f}} \idarr{d} \kcell{dr}{\ctwo} &
      \cdot \karrowm{r}{{u}_{f_{\id}}} \idarr{d} \kcell{dr}{u_{\ctwo}} &
      \cdot \harrowm{r}{{(f_\id)}_{u}} \ar[d] \hcell{dr}{(\ctwo)_u} & 
      c \idarr{d} \\
      a \karrows{r}{\id} &
      a \karrows{r}{{u}_{f}} &
      \cdot \harrows{r}{{f}_{u}} & 
      c
    \end{tikzcd}
    \ \ = \ \ 
    \begin{tikzcd}
      a \karrow{rr}{{u}_{f}} \idarr{d} \kcell{drr}{\lambda} &&
      \cdot \harrow{r}{{f}_{u}} \idarr{d} & 
      c \idarr{d} \\
      a \karrows{r}{\id} &
      a \karrows{r}{{u}_{f}} &
      \cdot \harrows{r}{{f}_{u}} & 
      c\rlap{ .}
    \end{tikzcd}
  \end{equation*}
\item For each diagram
  \[\begin{tikzcd}
      a \harrow{r}{f} & b \harrow{r}{g} & c \harrow{r}{h} & d \karrow{r}{u} & e
    \end{tikzcd}\]
  we have the equality
  \begin{equation*}
    \tag{\MakeUppercase{\wthree}}
    \label{eq:wthree}
    \begin{tikzcd}[column sep = 2em]
      a \karrow{r}{((u_h)_g)_f} \idarr{d} \kcell{dr}{(\cthree)_f} &[1.5em]
      \cdot \harrow{r}{f_{(u_h)_g}} \ar[d] \hcell{dr}{f_{\cthree}} &[1em]
      \cdot \harrow{r}{g_{u_h}} \hcell{drr}{\cthree} \ar[d] &
      \cdot \harrow{r}{h_u} &
      e \idarr{d} \\
      a \karrowm{r}{(u_{gh})_f} \idarr{d} \kcell{dr}{\cthree} &
      \cdot \harrowm{r}{f_{u_{gh}}} \ar[d] \hcell{drrr}{\cthree} &
      \cdot \harrowm{rr}{(gh)_u} &&
      e \idarr{d} \\
      a \karrowm{r}{u_{f(gh)}} \idarr{d} \kcell{dr}{u_\alpha} &
      \cdot \harrowm{rrr}{(f(gh))_u} \idarr{d} \hcell{drrr}{\alpha_u} &&&
      e \idarr{d} \\
      a \karrows{r}{u_{(fg)h}} &
      \cdot \harrows{rrr}{((fg)h)_u} &&&
      e
    \end{tikzcd}
    \ =\ 
    \begin{tikzcd}[column sep = 2em]
      a \karrow{r}{((u_h)_g)_f} \idarr{d} \kcell{dr}{\cthree} &[1.5em]
      \cdot \harrow{r}{f_{(u_h)_g}} \ar[d] \hcell{drr}{\cthree} &[1em]
      \cdot \harrow{r}{g_{u_h}} &
      \cdot \harrow{r}{h_u} \idarr{d} &
      e \idarr{d} \\
      a \karrowm{r}{(u_h)_{fg}} \idarr{d} \kcell{dr}{\cthree} &
      \cdot \harrowm{rr}{(fg)_{u_h}} \ar[d] \hcell{drrr}{\cthree} &&
      \cdot \harrowm{r}{h_u} &
      e \idarr{d} \\
      a \karrows{r}{u_{(fg)h}} &
      \cdot \harrows{rrr}{((fg)h)_u} &&&
      e
    \end{tikzcd}
  \end{equation*}
\item For each diagram
  \[\begin{tikzcd}
      a \harrow{r}{f} & b \karrow{r}{u} & c \karrow{r}{v} & d \karrow{r}{w} & e
    \end{tikzcd}\]
  we have the equality
  \begin{equation*}
    \tag{\MakeUppercase{\wfour}}
    \label{eq:wfour}
    \begin{tikzcd}[column sep = 2em]
      a \karrow{rrr}{{(u(vw))}_{f}} \idarr{d} \kcell{drrr}{\cfour} &&&[1em]
      \cdot \harrow{r}{{f}_{u(vw)}} \ar[d] \hcell{dr}{\cfour} &[1.5em]
      e \idarr{d} \\
      a \karrowm{r}{u_{f}} \idarr{d} &
      \cdot \karrowm{rr}{(vw)_{f_{u}}} \idarr{d} \kcell{drr}{\cfour} &&
      \cdot \harrowm{r}{{(f_u)}_{vw}} \ar[d] \hcell{dr}{\cfour} &
      e \idarr{d} \\
      a \karrows{r}{u_f} &
      \cdot \karrows{r}{{v}_{f_u}} &
      \cdot \karrows{r}{{w}_{(f_u)_v}} &
      \cdot \harrows{r}{((f_u)_v)_w} & 
      e
    \end{tikzcd}
    \ =\ 
    \begin{tikzcd}[column sep = 2em]
      a \karrow{rrr}{{(u(vw))}_{f}} \idarr{d} \kcell{drrr}{\alpha_f} &&&[1em]
      \cdot \harrow{r}{{f}_{u(vw)}} \ar[d] \hcell{dr}{f_\alpha} &[1.5em]
      e \idarr{d} \\
      a \karrowm{rrr}{{((uv)w)}_{f}} \idarr{d} \kcell{drrr}{\cfour} &&&
      \cdot \harrowm{r}{{f}_{(uv)w}} \ar[d] \hcell{dr}{\cfour} &
      e \idarr{d} \\
      a \karrowm{rr}{(uv)_{f}} \idarr{d} \kcell{drr}{\cfour} &&
      \cdot \karrowm{r}{w_{f_{uv}}} \ar[d] \kcell{dr}{w_{\cfour}} &
      \cdot \harrowm{r}{{(f_{uv})}_{w}} \ar[d] \hcell{dr}{(\cfour)_w} &
      e \idarr{d} \\
      a \karrows{r}{u_f} &
      \cdot \karrows{r}{{v}_{f_u}}  &
      \cdot \karrows{r}{{w}_{(f_u)_v}}  &
      \cdot \harrows{r}{((f_u)_v)_w}  & 
      e 
    \end{tikzcd}
  \end{equation*}
\item For each diagram
  \[\begin{tikzcd}
      a \harrow{r}{f} & b \harrow{r}{g} & c \karrow{r}{u} & d \karrow{r}{v} & e
    \end{tikzcd}\]
  we have the equality
  \begin{equation*}
    \tag{\MakeUppercase{\wfive}}
    \label{eq:wfive}
    \begin{tikzcd}[column sep = 1.5em]
      a \karrow{rr}{((uv)_g)_f} \idarr{d} \kcell{drr}{\cthree} &&
      \cdot \harrow{r}{f_{(uv)_g}} \ar[d] \hcell{drr}{\cthree} &
      \cdot \harrow{r}{g_{uv}} &
      e \idarr{d} \\
      a \karrowm{rr}{(uv)_{fg}} \idarr{d} \kcell{drr}{\cfour} &&
      \cdot \harrowm{rr}{(fg)_{uv}} \ar[d] \hcell{drr}{\cfour} &&
      e \idarr{d} \\
      a \karrows{r}{u_{fg}} &
      \cdot \karrows{r}{v_{(fg)_u}} &
      \cdot \harrows{rr}{((fg)_u)_v} &&
      e
    \end{tikzcd}
    \ =\ 
    \begin{tikzcd}[column sep = 3em]
      a \karrow{rr}{((uv)_g)_f} \idarr{d} \kcell{drr}{(\cfour)_f} &&[1.5em]
      \cdot \harrow{r}{f_{(uv)_g}} \ar[d] \hcell{dr}{f_{\cfour}} &[1.5em]
      \cdot \harrow{r}{g_{uv}} \ar[d] \hcell{dr}{\cfour} &
      e \idarr{d} \\
      a \karrowm{rr}{(u_g \ast v_{g_u})_{f}} \idarr{d} \kcell{drr}{\cfour} &&
      \cdot \harrowm{r}{f_{(u_g \ast v_{g_u})}} \ar[d] \hcell{dr}{\cfour} &
      \cdot \harrowm{r}{(g_u)_v} \idarr{d} &
      e \idarr{d} \\
      a \karrowm{r}{(u_g)_f} \idarr{d} &
      \cdot \karrowm{r}{(\!v_{g_u}\!)_{(\!f_{u_g}\!)}} \idarr{d} \kcell{dr}{\cthree} &
      \cdot \harrowm{r}{(\!f_{u_g}\!)_{(\!v_{g_u}\!)}} \ar[d] \hcell{drr}{\cthree} &
      \cdot \harrowm{r}{(g_u)_v} &
      e \idarr{d} \\
      a \karrowm{r}{(u_g)_f} \idarr{d} \kcell{dr}{\cthree} &
      \cdot \karrowm{r}{v_{(f_{u_g} \ast g_u)}} \ar[d] \kcell{dr}{v_{\cthree}} &
      \cdot \harrowm{rr}{{(f_{u_g} \ast g_u)}_v} \ar[d] \hcell{drr}{(\cthree)_v} &&
      e \idarr{d} \\
      a \karrows{r}{u_{fg}} &
      \cdot \karrows{r}{v_{(fg)_u}} &
      \cdot \harrows{rr}{((fg)_u)_v} &&
      e
    \end{tikzcd}
  \end{equation*}
\end{itemize}

\section{Composing double categories}\label{sec:composing}

A pseudo-distributive law of one pseudomonad over another induces a
composite pseudomonad, as in~\cite[\S 5]{Marmolejo1999Distributive}.
In particular, each distributive law of double categories, $\A$
over $\B$, determines a composite double category, which we denote by
$\B \bt \A$, and which we now describe explicitly.

First of all, the
underlying span of $\B \bt \A$ is the pullback:
\[\begin{tikzcd}[sep = {3.5em, between origins}]
    && \B_1 \,{}_{\cod}\!\!\times_{\dom} \A_1 \ar[dl] \ar[dr] \pb{dd} && \\
    & \B_1 \arrow[swap]{dl}{\dom} \arrow{dr}{\cod} & & \A_1 \arrow[swap]{dl}{\dom} \arrow{dr}{\cod} & \\
    \X_0 && \X_0 && \X_0\rlap{ ;}
  \end{tikzcd}\]
thus, objects and vertical arrows are as in $\A$ and $\B$, while
horizontal arrows and cells are formally composable pairs in the order
$\B$-then-$\A$. Vertical composition is componentwise.

Horizontal composition of arrows and cells is given by:
\begin{equation}
  \label{eq:hcomp-bowtie}
  \begin{tikzcd}[ampersand replacement=\&, sep={3.5em, between origins}]
    \kcorner{a}{u}{s}{\tau} \& 
    \hcorner{b}{f}{}{\sigma} \&
    c \arrow{d}{t} \cell{dr}{\ast} \&[1.5em]
    \kcorner{c}{v}{t}{\rho} \& 
    \hcorner{d}{g}{}{\pi} \& 
    e \arrow{d}{r} \cell{dr}{=} \&[2em]
    \kcorner{a}{u}{s}{\tau} \& 
    \kcorner{b}{v_f}{}{\rho_\sigma} \& 
    \hcorner{\cdot}{f_v}{}{\sigma_\rho} \& 
    \hcorner{d}{g}{}{\pi} \& 
    e \arrow{d}{r} \\
    a' \karrows{r}{u'} \&
    b' \harrows{r}{f'} \& 
    c' \&
    c' \karrows{r}{v'} \&
    d' \harrows{r}{g'} \& 
    e' \&
    a' \karrows{r}{u'} \&
    b' \karrows{r}{v'_{f'}} \&
    {\cdot} \harrows{r}{f'_{v'}} \& 
    d' \harrows{r}{g'} \& 
    e'\rlap{ ,}
  \end{tikzcd}
\end{equation}
where we interchange the middle two cells via $\delta$. When $\delta$
is given by pullback, as in \cref{cor:spans}, this is composition of
spans by pullback. Finally, we describe the three coherence
isomorphisms $\alpha$, $\lambda$ and $\rho$ for $\B \bt \A$:
\begin{itemize}
    \item 
The component of $\alpha$ at a composable triple of maps
\[
\begin{tikzcd}
x_0 \karrow{r}{u} & 
x_1 \harrow{r}{f} & 
x_2 \karrow{r}{v} & 
x_3 \harrow{r}{g} & 
x_4 \karrow{r}{w} & 
x_5 \harrow{r}{h} & 
x_6
\end{tikzcd}
\]
is given by the following formally composable pair of invertible
cells, in which we suppress the associativity constraints of $\A$
and $\B$:
\[
\begin{tikzcd}[column sep = 4em]
x_0 \karrow{r}{u} \idarr{d} & 
x_1 \karrow{rr}{(v w_g)_f} \idarr{d} \kcell{drr}{\cfour} &&
\cdot \harrow{r}{f_{v w_g}} \ar[d] \hcell{dr}{\cfour}& 
\cdot \harrow{r}{g_w} \idarr{d} & 
x_5 \harrow{r}{h} \idarr{d} & 
x_6 \idarr{d} \\
x_0 \karrowm{r}{u} \idarr{d} & 
x_1 \karrowm{r}{v_f} \idarr{d} & 
\cdot \karrowm{r}{(w_g)_{f_v}} \idarr{d} \kcell{dr}{\cthree} & 
\cdot \harrowm{r}{(f_v)_{{w_g}}} \ar[d] \hcell{drr}{\cthree} &
\cdot \harrowm{r}{g_w} & 
x_5 \harrow{r}{h} \idarr{d} & 
x_6 \idarr{d} \\
x_0 \karrows{r}{u} & 
x_1 \karrows{r}{v_f} & 
\cdot \karrows{r}{w_{f_v g}} & 
\cdot \harrows{rr}{(f_v g)_w} && 
x_5 \harrows{r}{h} & 
x_6
\end{tikzcd}
\]
\item The components of $\lambda$ and $\rho$ at an arrow
\[\begin{tikzcd}
    a \karrow{r}{u} & 
    b \harrow{r}{f} & 
    c
  \end{tikzcd}\] of $\B \bt \A$ are given by the
formally composable pairs of invertible cells:
\[\begin{tikzcd}
    a \karrow{rr}{u} \idarr{d} \kcell{drr}{\lambda} &&
    b \harrow{rr}{f} \idarr{d} \hcell{drr}{\lambda} &&
    c \idarr{d} \\
    a \karrowm{r}{\id} \idarr{d} & 
    a \karrowm{r}{u} \idarr{d} \kcell{dr}{\cone} & 
    b \harrowm{r}{\id} \ar[d] \hcell{dr}{\cone} & 
    b \harrowm{r}{f} \idarr{d} &
    c \idarr{d} \\
    a \karrows{r}{\id} & 
    a \karrows{r}{u_\id} & 
    \cdot \harrows{r}{\id_u} &
    b \harrows{r}{f} &
    c
\end{tikzcd}\]
and
\[\begin{tikzcd}
    a \karrow{r}{u} \idarr{d} & 
    b \karrow{r}{\id_f} \idarr{d} \kcell{dr}{\ctwo} & 
    \cdot \harrow{r}{f_\id} \ar[d] \hcell{dr}{\ctwo} & 
    c \harrow{r}{\id} \idarr{d} &
    c \idarr{d} \\
    a \karrowm{r}{u} \idarr{d} \kcell{drr}{\rho} & 
    b \karrowm{r}{\id} & 
    b \harrowm{r}{f} \idarr{d} \hcell{drr}{\rho} & 
    c \harrowm{r}{\id} &
    c \idarr{d} \\
    a \karrows{rr}{u} &&
    b \harrows{rr}{f} &&
    c\rlap{ .}
\end{tikzcd}\]
\end{itemize}
The fact that these data satisfy the unit and pentagon axioms is a
consequence of the general theory of pseudo-distributive laws, but can
also be shown directly. To check that the associativity pentagon
commutes one will need to use the corresponding axioms for $\A$ and
$\B$, axioms \ref{eq:wthree}, \ref{eq:wfour}, \ref{eq:wfive}, and
naturality of all coherence data. For the unit axiom, one will need to
use the corresponding axioms for $\A$ and $\B$, axioms \ref{eq:wone},
\ref{eq:wtwo}, and naturality of all coherence data.

\section{Zeroth example: factorisation systems}\label{sec:factorisation}

In this section we show that a factorisation system on a category $\C$
is captured by a distributive law between two double categories, for
which the composite double category is, to within equivalence, $\C$.
We begin with the strict case of~\cite{rosebrugh_distributive_2002}.

Double categories $\A$ which are vertically discrete---meaning that
the categories $\A_0$ and $\A_1$ are discrete categories---are just
categories, and a distributive law between two such is a distributive
law between categories as in~\cite{rosebrugh_distributive_2002}. In
this case, the composite double category is again discrete, and
coincides with the composite category of \emph{ibid}. So our
notion generalises Rosebrugh and Wood's.

A key point in~\cite{rosebrugh_distributive_2002} is that the
composite monad of a distributive law can be decomposed into its
constituents by way of an extra structure borne by the composite
category $\C$: that of a \emph{strict factorisation system}.
\begin{definition}
  A \emph{strict factorisation system} on a category $\C$ comprises
  subcategories $\E$ and $\M$ of $\C$, each containing every object,
  such that each map $f$ of $\C$ factorises uniquely as
  $f = me$ with $e \in \E$ and $m \in \M$.
\end{definition}

Given a strict factorisation system on a category, we recover a
distributive law between the constituents $\E$ and $\M$ whose
composite monad is, up to labelling of morphisms, precisely $\C$. In
this sense, distributive laws are the same as strict factorisation
systems.

Now, strict factorisation systems do arise in nature---for example,
the (degeneracy, face) strict factorisation system on $\Delta$---but
more common in practice are the unqualified \emph{factorisation
  systems}:

\begin{definition}
  A \emph{factorisation system} on a category $\C$ comprises
  subcategories $\E$ and $\M$ of $\C$, each containing every object,
  such that:
  \begin{enumerate}[label=(\roman*),itemsep=0em]
  \item $\E$ and $\M$ each contain every isomorphism of $\C$;
  \item every map $f$ of $\C$ factorises as $f = me$ with $e \in \E$
    and $m \in \M$;
  \item each $e \in \E$ is orthogonal to each $m \in \M$, in the sense
    that for every $e \in \E$ and $m \in \M$ fitting into a commuting
    square as below, there is a unique $d \colon x \to y$ with
    $de = u$ and $md = v$.
    \begin{equation*}
      \begin{tikzcd}
        w \arrow{r}{u} \arrow[swap]{d}{e} &
        y \arrow{d}{m} \\
        x \arrow[swap]{r}{v} \arrow[dashed]{ur}{d} &
        z\rlap{ .}
      \end{tikzcd}
    \end{equation*}
  \end{enumerate}  
\end{definition}

The nomenclature is unfortunate: a strict factorisation system is not
in general a factorisation system. However, if we close its two
classes of maps under composition with isomorphisms in $\C$, then it
becomes one, which partially excuses the adjectival form.

In their \S 5.1, Rosebrugh and Wood ask whether non-strict
factorisation systems can be captured by replacing the equations of a
distributive law with coherent isomorphisms; as we recalled in the
introduction, they note that they lack an appropriate dimension in
which to do so. Their own remedy is to pass to the bicategory of
profunctors between groupoids, wherein a factorisation system $(\E,
\M)$ on $\C$ \emph{is} a distributive law, between $\E$ and $\M$ qua
monads on the groupoid of isomorphisms of $\C$. In our setting, the
isomorphisms live instead in the vertical direction of a double
category.

\begin{definition}
  Let $\C$ be a category and $\A$ a wide subcategory of $\C$. We write
  ${\Sq(\C)}|_{\A}$ for the sub-double category of $\Sq(\C)$
  containing all the objects and vertical maps, but only
  the $\A$-maps horizontally, along with all squares between them.
\end{definition}

\begin{proposition}\label{prop:fs-distlaw}
  A factorisation system $(\E,\M)$ on a category $\C$ gives rise to a
  distributive law $\delta$ of the double category $\Sq(\C)|_{\M}$ over
  $\Sq(\C)|_{\E}$.
\end{proposition}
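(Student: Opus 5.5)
We construct the data of \cref{subsec:elementarily} by choosing factorisations and then letting orthogonality supply every coherence cell and axiom. Both $\A = \Sq(\C)|_{\M}$ and $\B = \Sq(\C)|_{\E}$ have vertical category $\C$, and their cells are commuting squares, so each cell is determined by its boundary. Hence all the data $\delta, \cone, \dots, \cfour$ amount to choices of objects and arrows making squares commute. All the axioms \ref{eq:wone}--\ref{eq:wfive} and the naturality conditions are equalities of cells, so they reduce to equalities of the vertical arrows bounding them. Since the outer verticals are fixed, each axiom amounts to checking that two candidate middle vertical arrows agree.

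\textbf{The functor $\delta$.} On objects, a formally composable pair $a \xrightarrow{m} b \xrightarrow{e} c$ with $m \in \M$, $e \in \E$ is sent to a chosen factorisation $em = m' e'$ with $e' \in \E$, $m' \in \M$, so that $u_f = e'$ and $f_u = m'$. On morphisms, take a pair of commuting squares with outer verticals $s,t$ and middle vertical $r$. The composite square has top edge $m'e'$ and bottom edge $m''e''$, which gives a commuting square $m'' e'' s = t m' e'$. Orthogonality of $e'$ against $m''$ then produces a unique diagonal, and this is the middle vertical arrow of $\delta(\sigma,\tau)$. Uniqueness of the diagonal gives functoriality.

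\textbf{The coherence cells.} Take $\cone$ at $u = e$. We compare the chosen factorisation of $\id \circ e$ with the trivial one $e = \id \cdot e$. The unique comparison between two $(\E,\M)$-factorisations of the same map is an isomorphism (standard, from orthogonality), and it serves as the middle vertical arrow. The isomorphism condition matters because the cells must be vertically invertible; since $\E$ and $\M$ contain all isomorphisms, the inverse cells are squares of the right kind. The cells $\ctwo$, $\cthree$ and $\cfour$ are built the same way:
\begin{itemize}
\item for $\cthree$, compare the factorisation of $u\circ g\circ f$ obtained by swapping twice with the single swap of $u$ past $gf$;
\item for $\cfour$, compare likewise in the $\E$-direction;
\item for $\ctwo$, compare with the trivial factorisation.
\end{itemize}
In each case the two sides are $(\E,\M)$-factorisations of the same composite, because $\M$ and $\E$ are closed under composition. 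Naturality of each cell in vertical composition follows by uniqueness of diagonal fillers.

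\textbf{The axioms.} In each of \ref{eq:wone}--\ref{eq:wfive}, both sides have the same top and bottom horizontal boundaries and identity outer verticals. Each side therefore exhibits its middle vertical as a comparison between two specified $(\E,\M)$-factorisations of one and the same map of $\C$. Such a comparison is unique, so the two sides are equal.

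The main obstacle is bookkeeping rather than mathematics. For each axiom one has to confirm that both composites really present a map compatible with the two factorisations, i.e.\ that every intermediate vertical commutes with the relevant $\E$- and $\M$-legs, so that uniqueness applies. \ref{eq:wfive} is the most intricate case. I would handle all of these uniformly with a single lemma. Suppose two composable pairs $m_1 e_1$ and $m_2 e_2$, with $e_i \in \E$ and $m_i \in \M$, satisfy $m_2 e_2 s = t m_1 e_1$ for fixed verticals $s,t$. Then there is exactly one middle arrow $d$ making both squares commute. Every cell and axiom above is an instance of this lemma.
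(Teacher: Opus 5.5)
Your proposal is correct and follows essentially the same route as the paper. Your uniform lemma is the paper's filler observation (a unique middle arrow, by orthogonality of $e_1$ against $m_2$, invertible when $s$ and $t$ are). Its existence half supplies $\delta$ on morphisms and $\cone$--$\cfour$, and its uniqueness half gives functoriality, naturality and (W1)--(W5), because cells of $\Sq(\C)$ are determined by their boundaries.

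Two minor remarks. Inverse cells need no appeal to $\E$ and $\M$ containing isomorphisms, since the vertical arrows of $\Sq(\C)|_\E$ and $\Sq(\C)|_\M$ are arbitrary maps of $\C$. The ``bookkeeping'' you flag is automatic, because a pasting of commuting squares commutes.
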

\begin{proof}
  On objects, $\delta$ acts via
  \begin{equation*}
    \begin{tikzcd}
      a \harrow{r}{m \in \M} & b \karrow{r}{e \in \E} & c
    \end{tikzcd} \qquad \mapsto \qquad 
    \begin{tikzcd}
      a \karrow{r}{e_m \in \E} & \cdot \harrow{r}{m_e \in \M} & c
    \end{tikzcd} 
  \end{equation*}
  where $m_e \circ e_m$ is a chosen $(\E, \M)$-factorisation of the
  composite $em$. Before giving its action on morphisms, we observe
  that, for any commuting diagram as around the outside of
  \begin{equation*}\tag{*}\label{eq:fs-filler}
    \begin{tikzcd}
      \kcorner{a}{e_1}{s}{} & \cdot \harrow{r}{m_1} \arrow[dashed]{d}{j} &
      c \arrow{d}{t} \\
      a' \karrows{r}{e_2} & \cdot \harrows{r}{m_2} & c'\rlap{ ,}
    \end{tikzcd}
  \end{equation*}
  there is, by orthogonality of $e_1$ and $m_2$, a unique $j$ as
  displayed making both squares commute, which is invertible whenever
  $s$ and $t$ are so. Using this, the action on
  morphisms of $\delta$ is now given by the assignment
  \begin{equation*}
    \begin{tikzcd}
      \hcorner{a}{m}{s}{=} & \kcorner{b}{e}{}{=} & c \arrow{d}{t} \\
      a' \harrows{r}{m'} & b' \karrows{r}{e'} & c'
    \end{tikzcd}
    \qquad
    \mapsto
    \qquad
    \begin{tikzcd}
      \kcorner{a}{e_m}{s}{=} &
      \hcorner{\cdot}{m_e}{j}{=} &
      c \arrow{d}{t} \\
      a' \karrows{r}{e'_{m'}} &
      \cdot \harrows{r}{m'_{e'}} &
      c'\rlap{ ,}
    \end{tikzcd}
  \end{equation*}
  wherein $j$ is the unique morphism assured by~\eqref{eq:fs-filler}.

  For the coherence data $\cone$--$\cfour$, observe that in each
  case, the required invertible cells inhabit a commuting diagram of
  the shape~\eqref{eq:fs-filler}, with both sides invertible, and the
  only missing datum being the interior vertical map. Thus, existence
  in~\eqref{eq:fs-filler} yields
  $\cone$--$\cfour$. Finally, for functoriality of $\delta$,
  naturality of $\cone$--$\cfour$, and the axioms
  (\ref{eq:wone})--(\ref{eq:wfive}), we observe that cells of
  $\Sq(\C)$ are determined by their boundaries, so that, as noted
  after~\eqref{eq:cfour}, each equality amounts to equality of the
  interior vertical maps on its two sides. Since each square commutes,
  this follows by uniqueness in~\eqref{eq:fs-filler}.
\end{proof}

In the situation of the previous result, it would be tempting to think
that the composite double category
$\Sq(\C)|_\E \bt \Sq(\C)|_\M$ would be $\Sq(\C)$. This is not the
case: for indeed, horizontal arrows from $a$ to $c$ in the composite
double category are formal composites
\begin{equation*}
 \begin{tikzcd}
   a \karrow{r}{e \in \E} & \cdot \harrow{r}{m \in \M} & c\rlap{ ,}
 \end{tikzcd} 
\end{equation*}
each of which certainly yields an arrow of $\C$ via composition---but
not \emph{uniquely} so, since an arrow of $\C$ may have many distinct
$(\E, \M)$-factorisations. Instead, we have an identity-on-objects
equivalence of double categories:
\begin{equation*}
  \begin{tikzcd}[column sep = 4em]
    \Sq(\C)|_\E \bt \Sq(\C)|_\M
      \arrow[shift left=1ex]{r}{R} &
    \Sq(\C)\rlap{ ,}
      \arrow[shift left=1ex]{l}{L}
  \end{tikzcd}
\end{equation*}
where $R$ and $L$ act as the identity on objects
and vertical arrows, and act on horizontal morphisms and cells by
composition and by factorisation respectively. Indeed, we have that
$RL$ is the identity double functor, while $LR$ is isomorphic to the
identity via the invertible globular cells $(e, m) \to LR(e, m)$
provided by~\eqref{eq:fs-filler}. Note that $R$ is a double functor, but $L$
is only a pseudo double functor; its comparison cells $L(f) \ast L(g) \cong L(f \ast g)$
are once again the globular isomorphisms provided
by~\eqref{eq:fs-filler}.

In particular, the underlying bicategory of
$\Sq(\C)|_\E \bt \Sq(\C)|_\M$ is equivalent (in the $2$-category of
bicategories, pseudofunctors and icons~\cite{Lack2010Icons}) to the
underlying bicategory of $\Sq(\C)$, which is $\C$ itself, seen as a
locally discrete bicategory.

\begin{remark}\label{rem:stepan}
  \v{S}t\v{e}p\'an~\cite{stepan2023} also describes factorisation
  systems by way of double categories, in a manner related to, but in
  a sense orthogonal to, ours. Given a factorisation system $(\E, \M)$
  on $\C$, he considers a \emph{single} double category $D_{\E,\M}$,
  the locally full sub-double category of $\Sq(\C)$ whose vertical and
  horizontal arrows are the $\E$- and $\M$-maps, and re-finds $\C$
  from it via the \emph{category of corners} construction
  of~\cite{weber2015}; in this way, \v{S}t\v{e}p\'an obtains an
  equivalence between the category of factorisation systems and a
  suitable category of \emph{factorisation double categories}.

  Thus, \v{S}t\v{e}p\'an places $\E$ and $\M$ in the two different
  directions of a single double category; while we place $\E$ and $\M$
  in the same, horizontal, direction of two distinct double
  categories. These apparently distinct approaches can be reconciled
  by realising the double categories in question as the faces of a
  single \emph{triple category}. Its objects are those of $\C$; its
  arrows in the three directions are the $\E$-maps, the $\M$-maps, and
  all maps of $\C$; and its faces and cubes are commuting squares and
  cubes in $\C$.

  The face spanned by the first two directions is \v{S}t\v{e}p\'an's
  $D_{\E,\M}$, while those spanned by the other two pairs of
  directions are our $\Sq(\C)|_\E$ and $\Sq(\C)|_\M$. Our distributive
  law involves all three faces, in much the same way that the
  distributive law~\eqref{eq:swap} of the introduction involves both
  directions of $D_{\E,\M}$. This suggests that distributive laws
  between double categories should correspond to suitable
  ``factorisation triple categories'', in the spirit of
  \v{S}t\v{e}p\'an; but we do not pursue this here.
\end{remark}

\section{Distributive laws from co-crossed double categories}\label{bridgeable}

In the rest of this paper, we exhibit a range of double categories,
including $\mathsf{T}$-spans, partial maps, topological
correspondences and polynomials, as the composites of distributive
laws. All of these arise from a single mechanism, whose input is a
double category $\C$ satisfying the following condition:
\begin{definition}\label{def:cocrossed}
  A double category $\C$ is \emph{co-crossed} when its codomain
  functor is a fibration, horizontal identities are cartesian, and
  cartesian cells are closed under horizontal composition.
\end{definition}

We now unpack our definition. A cell $\alpha$ of $\C$ as to the left
below is cartesian with respect to $\cod$ when, for each cell $\beta$
as centre below and each factorisation $n = kh$ of its right-hand
edge, there is a unique factorisation $\hat \beta$
such as shown to the right.
\[
\begin{tikzcd}
  \cdot \arrow[swap]{d}{g} \arrow{r}{v} \cell{dr}{\alpha} & \cdot \arrow{d}{k} \\
  \cdot \arrow[swap]{r}{w} & \cdot 
\end{tikzcd}
\quad \quad 
\begin{tikzcd}
  \cdot \arrow[swap]{dd}{m} \arrow{r}{u} \cell{ddr}{\beta} &
  \cdot \arrow{dd}{n} \cell{ddr}{=} &
  \cdot \arrow[swap]{d}{\hat m} \arrow{r}{u} \cell{dr}{\hat \beta} &
  \cdot \arrow{d}{h} \\
  & & 
  \cdot \arrow[swap]{d}{g} \arrow[r, "v" description] \cell{dr}{\alpha} &
  \cdot \arrow{d}{k} \\
  \cdot \arrow[swap]{r}{w} &
  \cdot &
  \cdot \arrow[swap]{r}{w} &
  \cdot 
\end{tikzcd}
\]
In this case, we call $\alpha$ a \emph{cartesian lift of $w$ at $k$},
and the fibration structure says that there is a cartesian lift of $w$
at $k$ for all $w,k$ in this configuration. Such cartesian cells are
always closed under binary and nullary \emph{vertical} composition; we
insist in our definition that they are moreover closed under nullary
and binary \emph{horizontal} composition.

\begin{example}
  For any category $\X$, the $\mathsf{cod}$-cartesian squares in the
  double category $\Sq(\X)$ are the pullback squares, which by the
  standard pullback properties are closed under horizontal and vertical
  composition. In particular, if $\X$ has pullbacks, then $\Sq(\X)$ is
  co-crossed.
\end{example}

In what follows, we assume our co-crossed double categories are provided with
a \emph{choice} of cartesian liftings:
\begin{equation*}
  \begin{tikzcd}
    & \cdot \arrow{d}{k} \cell{drr}{\mapsto} & & 
    \cdot \arrow{r}{w_k} \arrow[swap]{d}{k_w} \cell{dr}{\dagger_{w,k}} & \cdot \arrow{d}{k} \\
    \cdot \arrow[swap]{r}{w} & \cdot & & 
    \cdot \arrow[swap]{r}{w} & \cdot
  \end{tikzcd}
\end{equation*}
Such a choice is not unique, but is so to within isomorphism. More
precisely, if $\alpha$ as above is another cartesian lift of $w$ at
$k$, then there is a unique vertically invertible cell $\gamma$
witnessing the pasting equality:
\begin{equation*}
  \begin{tikzcd}
    \cdot \arrow{r}{w_k} \arrow[swap]{d}{k_w} \cell{dr}{\dagger_{w,k}} & \cdot \arrow{d}{k} \\
    \cdot \arrow[swap]{r}{w} & \cdot
  \end{tikzcd} \qquad = \qquad 
  \begin{tikzcd}
    \cdot \arrow{d} \arrow{r}{w_k} \cell{dr}{\gamma} & \cdot \idarr{d} \\
    \cdot \arrow[swap]{d}{g} \arrow[r, "v" description] \cell{dr}{\alpha} & \cdot \arrow{d}{k} \\
    \cdot \arrow[swap]{r}{w} & \cdot 
  \end{tikzcd}
\end{equation*}
We do \emph{not} assume that composites of chosen cartesian cells are
chosen. However, they will, by our earlier assumptions, be cartesian,
and so agree with the chosen lifts to within isomorphism, in the above
sense.

\cite[Construction~5.3.3]{weber2015} shows that each crossed internal
category $\C$ in a finitely complete $2$-category $\K$ gives rise to a
distributive law between $\C$ and the ``internal category of
squares'' (built using comma objects) of $\C_0$. When
$\K = \Cat^{\mathrm{co}}$, a crossed internal category is a co-crossed
double category which is strict, whose codomain fibration is split,
and whose chosen lifts are strictly preserved by horizontal
composition; the following is the analogue of Weber's construction
without these three restrictions.

We write $\C^{\mathrm{hop}}$ for the \emph{horizontal opposite} of a
double category $\C$, obtained by interchanging its domain and
codomain functors and thus the direction of its horizontal arrows and
cells.

\begin{proposition}\label{prop:fibred}
  Let $\C$ be a co-crossed double category equipped with a choice of
  cartesian liftings. There is a distributive law of $\Sq(\C_0)$ over
  $\C^{\mathrm{hop}}$ whose functor~\eqref{eq:deltafunctor} sends each
  formally composable pair $(f,u)$ to the horizontal and vertical
  domains $(u_f, f_u)$ of the chosen cartesian lift of $u$ at $f$.
\end{proposition}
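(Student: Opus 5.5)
We set $\A = \Sq(\C_0)$ and $\B = \C^{\mathrm{hop}}$, so that a formally composable pair consists of a vertical arrow $f \colon a \to b$ of $\C$, viewed as a horizontal arrow of $\Sq(\C_0)$, followed by a horizontal arrow $u$ of $\C^{\mathrm{hop}}$ out of $b$, that is, a horizontal arrow $u \colon c \to b$ of $\C$. We take $\delta(f,u) = (u_f, f_u)$ from the chosen lift $\dagger_{u,f}$, with $u_f$ read as an arrow of $\C^{\mathrm{hop}}$ and $f_u$ as an arrow of $\Sq(\C_0)$. The action of $\delta$ on morphisms comes from the universal property. A morphism $(f,u) \to (f',u')$ is a commuting square in $\C_0$ together with a cell $\tau$ of $\C$ abutting it. Pasting $\dagger_{u,f}$ with $\tau$ gives a cell into $u'$ whose right edge factors through $f'$. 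Cartesianness of $\dagger_{u',f'}$ then yields a unique cell $\tau_\sigma \colon u_f \to u'_{f'}$ (horizontally reversed), together with a middle vertical arrow. The square $\sigma_\tau$ in $\C_0$ is the vertical-edge equation that this factorisation satisfies. Functoriality of $\delta$ follows from uniqueness of these factorisations.

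The coherence data $\omega_1,\dots,\omega_4$ are all obtained from the principle that cartesian lifts agree up to a unique vertically invertible comparison cell $\gamma$. For $\omega_1$ we use that the horizontal identity $\id_{f}$ of the vertical arrow $f$ in $\C$ is cartesian. It is therefore a lift of $\id_b$ at $f$, and comparing it with $\dagger_{\id,f}$ gives $\omega_1$. For $\omega_2$ we use that the vertical identity cell on $u$ is trivially a lift of $u$ at $1$. For $\omega_3$, given $f$ and $g$ in $\C_0$ and $u$, we paste $\dagger_{u_g,f}$ vertically on top of $\dagger_{u,g}$. Cartesian cells are closed under vertical composition, so this pasting is a lift of $u$ at $gf$, and comparing with $\dagger_{u,gf}$ gives the pair of cells. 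For $\omega_4$, given $f$ and composable $u,v$ in $\C$, we paste the lifts $\dagger_{u,\cdot}$ and $\dagger_{v,\cdot}$ horizontally along the shared vertical edge $f_v$. By the closure assumption of \cref{def:cocrossed} this horizontal composite is cartesian, and comparing it with the chosen lift of $u \ast v$ at $f$ gives $\omega_4$. This is the only place where horizontal closure is used. In each case the $\Sq(\C_0)$ component is a commuting square determined by its boundary, and the $\C^{\mathrm{hop}}$ component is the comparison $\gamma$. Naturality of each $\omega_i$ again follows from uniqueness of factorisation through cartesian cells.

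It remains to verify axioms \eqref{eq:wone}--\eqref{eq:wfive}. Each axiom asserts an equality in $\B_1 \,{}_{\cod}\!\!\times_{\dom} \A_1$. The $\Sq(\C_0)$ components are commuting squares, so they are determined by their boundaries, and their equality reduces to equality of the middle vertical arrows; this will follow from the $\C$ components. The plan is the same for every axiom. Paste each side of the axiom, in $\C$, on top of the chosen cartesian lift at the bottom boundary. Check that both pastings yield the same cell, namely some canonical composite of chosen lifts, or of lifts composed with $\alpha,\lambda,\rho$, which are vertically invertible and globular. Then cancel the chosen lift by uniqueness of factorisation through a cartesian cell. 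The main obstacle is this bookkeeping, especially for \eqref{eq:wfive} and \eqref{eq:wfour}, which mix horizontal composites of cartesian cells with the associator of $\C$. There one must use the interchange law together with naturality of $\alpha$ and $\lambda$ in $\C$ to see that both sides paste to the same composite. I expect this to be long but mechanical once organised through the uniqueness principle.
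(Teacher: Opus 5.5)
Your proposal follows the paper's proof essentially step for step: $\delta$ on morphisms by factorising through $\dagger_{u',f'}$, each of $\omega_1$--$\omega_4$ as the unique comparison between a chosen lift and an identity or composite of cartesian cells, and the axioms by uniqueness of factorisations through cartesian cells. One labelling slip: your $\omega_1$ and $\omega_2$ are interchanged relative to the convention of \cref{sec:distributive_laws}. There $\omega_1$ concerns pairs $(\id,u)$ and so compares $\dagger_{u,\id}$ with the vertical identity cell on $u$, while $\omega_2$ concerns pairs $(f,\id)$ and uses that the horizontal identity cell $\id_f$ is cartesian.
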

\begin{proof}
  Following the notation of \cref{sec:distributive_laws} we are
  setting $\A=\Sq(\C_0)$ and $\B = \C^{\mathrm{hop}}$. Both double
  categories have the same objects and vertical arrows as $\C$. The
  horizontal arrows of $\A$ are also the vertical arrows of $\C$, and
  the squares are commuting squares of vertical arrows, as to the left
  in:
  \begin{equation*}
    \begin{tikzcd}
      \hcorner{a}{f}{g}{=} & b \arrow{d}{k} \\
      c \harrows{r}{h} & d
    \end{tikzcd} \qquad \qquad
    \begin{tikzcd}
      a \karrowl{r}{u} \arrow[swap]{d}{g} \kcell{dr}{\alpha} & b \arrow{d}{k} \\
      c \karrowsl{r}{v} & d
    \end{tikzcd}
  \end{equation*}
  In $\B$, by contrast, the horizontal arrows from $a$ to $b$ are the
  horizontal arrows of $\C$ from $b$ to $a$, while cells of $\B$ with
  left edge $g$ and right edge $k$ are cells of $\C$ with domain $k$
  and codomain $g$, as displayed above right. Here, and for the remainder
  of the proof, we notate horizontal arrows of $\C$ with a tick in
  order to minimise confusion between the two kinds of square.

  We begin by describing the functor $\delta$
  of~\eqref{eq:deltafunctor}. On objects, this is an assignment
  \[
    (
    \begin{tikzcd}
      a \harrow{r}{f} & b \karrowl{r}{u} & c
    \end{tikzcd}
    ) \qquad \mapsto \qquad
    (
    \begin{tikzcd}
      a \karrowl{r}{u_f} & \cdot \harrow{r}{f_u} & c
    \end{tikzcd}
    )
  \]
  where $f$ and $f_u$ are vertical arrows of $\C$, and $u$ and $u_f$
  are horizontal arrows. We obtain this by taking the horizontal
  and vertical domains of the chosen cartesian
  lift of $u$ at $f$:
  \[
    \begin{tikzcd}
      \cdot \arrowt{r}{u_f} \arrow[swap]{d}{f_u} \cart{dr} & a \arrow{d}{f} \\
      c \arrowts{r}{u} & b\rlap{ .}
    \end{tikzcd}
  \]
  On morphisms, $\delta$ is an assignment:
  \begin{equation*}
  \begin{tikzcd}
    \hcorner{a}{f}{g}{=} & b \karrowl{r}{u} \kcell{dr}{\beta} \arrow{d}{h} & c \arrow{d}{k} \\
    a' \harrows{r}{f'} & b' \karrowsl{r}{u'} & c'
  \end{tikzcd}
  \qquad
  \mapsto
  \qquad
  \begin{tikzcd}
    a \karrowl{r}{u_f} \kcell{dr}{\hat\beta} \arrow[swap]{d}{g} & \hcorner{\cdot}{f_u}{\ell}{=} & c \arrow{d}{k} \\
    a' \karrowsl{r}{u'_{f'}} &
    \cdot \harrows{r}{f'_{u'}} &
    c'\rlap{ .}
  \end{tikzcd}
  \end{equation*}
  Here $\hat\beta$ is obtained as follows. Pasting $\beta$ onto the
  chosen cartesian cell $\dagger_{u,f}$ gives the composite to the left
  below, whose right-hand edge is $hf$. Since the square marked $=$
  commutes, $hf = f'g$, so that this edge factors through $f'$;
  whereupon the universal property of the chosen cartesian lift
  $\dagger_{u',f'}$ of $u'$ at $f'$ yields a unique cell $\hat \beta$
  with right-hand edge $g$ satisfying:
  \begin{equation*}
    \begin{tikzcd}
      \cdot \arrowt{r}{u_f} \arrow[swap]{d}{f_u} \cell{dr}{\dagger_{u,f}} & a \arrow{d}{f} \\
      c \arrowt{r}{u} \arrow[swap]{d}{k} \cell{dr}{\beta} & b \arrow{d}{h} \\
      c' \arrowts{r}{u'} & b'
    \end{tikzcd}
    \qquad = \qquad
    \begin{tikzcd}
      \cdot \arrowt{r}{u_f} \arrow[swap]{d}{\ell} \cell{dr}{\hat\beta} & a \arrow{d}{g} \\
      \cdot \arrowt{r}{\smash{u'_{f'}}} \arrow[swap]{d}{f'_{u'}} \cell{dr}{\dagger_{u',f'}} & a' \arrow{d}{f'} \\
      c' \arrowts{r}{u'} & b'
    \end{tikzcd}
  \end{equation*}
  Comparing left-hand edges in this equality gives
  $f'_{u'} \ell = k f_u$, which is exactly the commuting square drawn
  to the right of $\hat\beta$ above.

  This describes the action of $\delta$ on objects and morphisms,
  and functoriality follows easily from the uniqueness part of the
  universal property of cartesian cells. We now turn to the coherence
  data $\cone$--$\cfour$ for the distributive law. In the following
  diagrams, which live in $\C$, we keep the names of composites from
  \cref{sec:distributive_laws}: thus $fg$ is the composite in $\A$,
  i.e., $gf$ in $\C_0$, and $uv$ is the composite in $\B$, i.e.,
  $v \ast u$ in $\C$.
    \begin{itemize}
    \item For each pair $(\id, u)$ we use the fact that (vertical)
      identities are cartesian for any fibration, and draw the diagram
      below, where $\cone$ is the unique isomorphism between cartesian
      cells.
    \[
      \begin{tikzcd}
        c \arrowt{r}{u} \arrow[swap]{d}{\ell} \cell{dr}{\cone} & a \idarr{d} \\
        \cdot \arrowt{r}{u_{\id}} \arrow[swap]{d}{\id_u} \cell{dr}{\dagger_{u,\id}} & a \idarr{d} \\
        c \arrowts{r}{u} & a
      \end{tikzcd}
      \qquad = \qquad
      \begin{tikzcd}
        c \arrowt{r}{u} \idarr{dd} \cell{ddr}{\id} & a \idarr{dd} \\
        & \\
        c \arrowts{r}{u} & a
      \end{tikzcd}
    \]
    This yields a pair of invertible cells of the required form.
    \[
      \begin{tikzcd}
        a \karrowl{r}{u} \idarr{d} \kcell{dr}{\cone} &
        c \harrow{r}{\id} \arrow{d}{\ell} \hcell{dr}{\cone} &
        c \idarr{d} \\
        a \karrowsl{r}{u_{\id}} & \cdot \harrows{r}{\id_u} & c
      \end{tikzcd}
    \]
    \item For each pair $(f, \id)$ we use the assumption that
      horizontal identities are cartesian, and draw the diagram below,
      where $\ctwo$ is the unique isomorphism between cartesian cells.
    \[
      \begin{tikzcd}
        \cdot \arrowt{r}{(\id_b)_f} \arrow[swap]{d}{\ell} \cell{dr}{\ctwo} & a \idarr{d} \\
        a \arrowt{r}{\id_a} \arrow[swap]{d}{f} \cell{dr}{\id_f} & a \arrow{d}{f} \\
        b \arrowts{r}{\id_b} & b
      \end{tikzcd}
      \qquad = \qquad
      \begin{tikzcd}
        \cdot \arrowt{r}{(\id_b)_f} \arrow[swap]{dd}{f_{\id}} \cell{ddr}{\dagger_{\id_b, f}} & a \arrow{dd}{f} \\
        & \\
        b \arrowts{r}{\id_b} & b
      \end{tikzcd}
    \]
    This yields a pair of invertible cells of the required form.
    \[
      \begin{tikzcd}
        a \karrowl{r}{(\id_b)_f} \idarr{d} \kcell{dr}{\ctwo} &
        \cdot \harrow{r}{f_{\id}} \arrow{d}{\ell} \hcell{dr}{\ctwo} &
        b \idarr{d} \\
        a \karrowsl{r}{\id_a} & a \harrows{r}{f} & b
      \end{tikzcd}
    \]
    \item For each triple $(f, g, u)$ we use the fact that cartesian
      cells for any fibration are closed under (vertical) composition,
      and draw the diagram below, where $\cthree$ is the unique
      isomorphism between cartesian cells.
    \[
      \begin{tikzcd}
        \cdot \arrowt{r}{(u_g)_f} \arrow[swap]{d}{\ell} \cell{dr}{\cthree} & a \idarr{d} \\
        \cdot \arrowt{r}{u_{fg}} \arrow[swap]{d}{(fg)_u} \cell{dr}{\dagger_{u,fg}} & a \arrow{d}{fg} \\
        d \arrowts{r}{u} & c
      \end{tikzcd}
      \qquad = \qquad
      \begin{tikzcd}
        \cdot \arrowt{r}{(u_g)_f} \arrow[swap]{d}{f_{u_g}} \cell{dr}{\dagger_{u_g,f}} & a \arrow{d}{f} \\
        \cdot \arrowt{r}{u_g} \arrow[swap]{d}{g_u} \cell{dr}{\dagger_{u,g}} & b \arrow{d}{g} \\
        d \arrowts{r}{u} & c
      \end{tikzcd}
    \]
    This yields a pair of invertible cells of the required form.
    \[
      \begin{tikzcd}
        a \karrowl{r}{(u_g)_f} \idarr{d} \kcell{dr}{\cthree} &
        \cdot \harrow{r}{f_{u_g}} \arrow{d}{\ell} \hcell{drr}{\cthree} &
        \cdot \harrow{r}{g_u} &
        d \idarr{d} \\
        a \karrowsl{r}{u_{fg}} &
        \cdot \harrows{rr}{(fg)_u} & &
        d
      \end{tikzcd}
    \]
    \item For each triple $(f, u, v)$ we use the assumption that
      cartesian cells are closed under horizontal composition, and
      draw the diagram below, where $\cfour$ is the unique isomorphism
      between cartesian cells.
    \[
      \begin{tikzcd}
        \cdot \arrowt{rr}{(uv)_f} \arrow[swap]{d}{\ell} \cell{drr}{\cfour} & & a \idarr{d} \\
        \cdot \arrowt{r}{v_{f_u}} \arrow[swap]{d}{(f_u)_v} \cell{dr}{\dagger_{v,f_u}} &
        \cdot \arrowt{r}{u_f} \arrow[swap]{d}{f_u} \cell{dr}{\dagger_{u,f}} &
        a \arrow{d}{f} \\
        d \arrowts{r}{v} & c \arrowts{r}{u} & b
      \end{tikzcd}
      \qquad = \qquad
      \begin{tikzcd}
        \cdot \arrowt{r}{(uv)_f} \arrow[swap]{dd}{f_{uv}} \cell{ddr}{\dagger_{uv,f}} & a \arrow{dd}{f} \\
        & \\
        d \arrowts{r}{uv} & b
      \end{tikzcd}
    \]
    This yields a pair of invertible cells of the required form.
    \[
      \begin{tikzcd}
        a \karrowl{rr}{(uv)_f} \idarr{d} \kcell{drr}{\cfour} & &
        \cdot \harrow{r}{f_{uv}} \arrow{d}{\ell} \hcell{dr}{\cfour} &
        d \idarr{d} \\
        a \karrowsl{r}{u_f} & \cdot \karrowsl{r}{v_{f_u}} &
        \cdot \harrows{r}{(f_u)_v} & d
      \end{tikzcd}
    \]
\end{itemize}
To see that $\cone$--$\cfour$ are natural and satisfy axioms
(\ref{eq:wone})--(\ref{eq:wfive}), one draws out the diagrams and
observes that each side of each equation is a composite of
isomorphisms determined by the universal properties of various
cartesian cells; each equation then holds by uniqueness of
isomorphisms between cartesian cells.
\end{proof}

In~\cite{weber2015}, the composite internal category obtained from a
distributive law constructed in the above manner plays an important
role. Weber concentrates on the case where $\K = \Cat(\E)$, so that a
crossed internal category in $\K$ is a crossed internal double
category in $\E$, and restricts the composite internal double category
back to an internal $2$-category, which he terms the
\emph{$2$-category of corners}. We follow this nomenclature, except
that we retain the double-categorical structure that Weber discards:

\begin{definition}\label{def:cnr}
  Let $\C$ be a co-crossed double category. Its \emph{double
    category of corners} $\Cnr \C$ is the composite double
  category $\C^{\mathrm{hop}} \bt \Sq(\C_0)$ of the distributive law of
  Proposition~\ref{prop:fibred}.
\end{definition}

\section{First example: \texorpdfstring{$\mathsf T$}{T}-spans}\label{sec:tspans}

\emph{$\mathsf T$-categories} were introduced by Burroni
in~\cite{Burroni1971T-categories}; they are a generalisation of
internal categories in a category $\E$ with pullbacks, wherein the
underlying source-target span $X_0 \leftarrow X_1 \rightarrow X_0$ is
replaced by a span of the form $TX_0 \leftarrow X_1 \rightarrow X_0$,
for $\mathsf{T}$ a cartesian monad on $\E$ (\cref{def:tspans}).
For suitable $\mathsf{T}$ and $\E$, the
$\mathsf{T}$-categories capture the (non-symmetric) multicategories;
the virtual double categories (= fc-multicategories)
of~\cite{Leinster2004Higher, Cruttwell2010A-unified}; and the globular
operads of
Batanin--Leinster~\cite{Batanin1998Monoidal,Leinster2004Higher}.

Just as internal categories in $\E$ are monads in the bicategory of
$\E$-spans, so $\mathsf{T}$-categories are monads in a bicategory of
\emph{$\mathsf{T}$-spans}. In this section, we show how the double
category of $\mathsf{T}$-spans arises from the machinery of
Section~\ref{bridgeable}. We start with the definitions, following the
conventions of~\cite{Leinster2004Higher}.

\begin{definition}\label{def:tspans}
  Let $\E$ be a category with pullbacks. A monad
  $\mathsf{T} = (T, \eta, \mu)$ on $\E$ is called \emph{cartesian}
  when $T$ preserves pullbacks, and each naturality square of $\eta$
  and $\mu$ is a pullback.
  In this situation, we define the double category of
  $\mathsf{T}$-spans as follows:
  \begin{itemize}[itemsep=0em]
  \item Its category of objects and vertical arrows is $\E$;
  \item Horizontal arrows from $a$ to $b$ are spans $f \colon Ta
    \leftarrow x \rightarrow b \colon g$;
  \item Cells bounded by the horizontal arrows $(f,g)$ and $(f',g')$
    and the vertical arrows $s$ and $t$ are maps $q$ fitting into a
    commuting diagram:
    \begin{equation}
      \label{eq:tspan-cell}
      \begin{tikzcd}
        Ta  \arrow[swap]{d}{Ts}
        & x \arrow{d}{q} \arrow[swap]{l}{f} \arrow{r}{g} 
        & b \arrow{d}{t} \\
        Ta'
        & x' \arrow{l}{f'} \arrow[swap]{r}{g'}
        & b'\rlap{ ;}
      \end{tikzcd}
    \end{equation}
  \item The horizontal identity on $a$ is the span $\eta_a \colon Ta
    \leftarrow a \rightarrow a \colon 1_a$;
  \item The horizontal composite of $\mathsf{T}$-spans $f \colon Ta
    \leftarrow x \rightarrow b \colon g$ and $h \colon Tb
    \leftarrow y \rightarrow c \colon k$ is given by the outer
    composite in:
    \begin{equation*}
      \begin{tikzcd}[sep = {3em, between origins}]
        &&& Tx \times_{Tb} y \ar[dl] \ar[dr] \pb[0.15]{dd} && \\
        && Tx \arrow[swap]{dl}{Tf} \arrow{dr}{Tg} & &
        y \arrow[swap]{dl}{h} \arrow{dr}{k} & \\
        &T^2a \arrow[swap]{dl}{\mu_a} && Tb && c\rlap{ ;} \\
        Ta
      \end{tikzcd}
    \end{equation*}
  \item The horizontal composite of cells is induced using the
    universal property of pullback;
  \item The associativity and unit constraint cells are induced using
    the universal property of pullback together with the cartesianness
    of the monad $\mathsf{T}$.
  \end{itemize}
\end{definition}
The last two points here are hazy---the reader can refer
to~\cite[Example 5.2.4]{Leinster2004Higher} for the details---but deliberately so,
as our goal is to obtain the double category of $\mathsf{T}$-spans in
a more principled way from the \emph{horizontal Kleisli double
  category} of $\mathsf{T}$.

To define this, note first that in a category with pullbacks, any
internal functor between internal categories has an internal
(bijective on objects, fully faithful) factorisation. In particular,
any double functor between strict double categories has such a
factorisation; so, given a monad $\mathsf{T}$ on the category $\E$,
we may consider the free functor
$F_{\mathsf{T}} \colon \E \rightarrow \E_{\mathsf{T}}$ into the
Kleisli category, the induced double functor
$\Sq(F_{\mathsf{T}}) \colon \Sq(\E) \rightarrow \Sq(\E_{\mathsf{T}})$,
and its factorisation
\begin{equation}\label{eq:boo-ff-kleisli}
  \Sq(\E) \rightarrow \KK \rightarrow \Sq(\E_{\mathsf{T}})\rlap{ .}
\end{equation}
The interposing double category is the \emph{horizontal Kleisli double
  category of $\mathsf{T}$}; it has objects and vertical arrows as in
$\E$; horizontal arrows $a \tor b$ given by $\E_\mathsf{T}$-arrows
from $a$ to $b$, i.e., maps $a \rightarrow Tb$ in $\E$; and cells as
to the left
given by commuting squares in $\E$ as to the right:
\begin{equation*}
  \begin{tikzcd}
    a \arrowt{r}{f} \arrow[swap]{d}{s} & b \arrow{d}{t} & & 
    a \arrow{r}{f} \arrow[swap]{d}{s} & Tb \arrow{d}{Tt} \\
    a' \arrowts{r}{f'} & b' & & 
    a' \arrow[swap]{r}{f'} & Tb'\rlap{ .}
\end{tikzcd}
\end{equation*}

\begin{lemma}
  \label{lem:cod-cartesian-pullbacks}
  Let $\mathsf{T}$ be a monad on $\E$.
  A cell of $\KK$ as to the left
  below is
  $\mathsf{cod}$-cartesian just when it is a pullback in $\E$.
  \begin{equation*}
    \begin{tikzcd}
      a'  \arrow{r}{f'} \arrow[swap]{d}{u}  & 
      Tb' \arrow{d}{Tv} \\
      a'' \arrow[swap]{r}{f''} & 
      Tb''
    \end{tikzcd} \qquad 
    \begin{tikzcd}
      a  \arrow{r}{f} \arrow[dotted]{d}{s} \arrow[bend right, swap]{dd}{w}& 
      Tb \arrow[swap]{d}{Tt} \\
      a'  \arrow{r}{f'} \arrow{d}{u}  & 
      Tb' \arrow[swap]{d}{Tv} \\
      a'' \arrow[swap]{r}{f''} & 
      Tb''
    \end{tikzcd} \qquad 
    \begin{tikzcd}
      a  \arrow[bend left]{dr}{x}  \arrow[dotted]{d}{s} \arrow[bend right, swap]{dd}{w}  \\
      a'  \arrow{r}{f'} \arrow{d}{u}  & 
      Tb' \arrow[swap]{d}{Tv} \\
      a'' \arrow[swap]{r}{f''} & 
      Tb''\rlap{ .}
    \end{tikzcd}
  \end{equation*}
\end{lemma}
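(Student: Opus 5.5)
Our plan is to unfold the definition of $\mathsf{cod}$-cartesianness in $\KK$ directly and observe that it is precisely the universal property of the square as a pullback in $\E$; the three diagrams drawn in the statement are exactly this comparison. Recall that cells of $\KK$ are commuting squares in $\E$ whose right-hand edge has the form $T(-)$, and that such cells are determined by their boundary. So the cell $\alpha$ on the left, with horizontal edges $f' \colon a' \to Tb'$, $f'' \colon a'' \to Tb''$ and vertical edges $u$, $v$, is cartesian just when the following holds. For every cell $\beta$ with top edge $f \colon a \to Tb$, left edge $w \colon a \to a''$ and right edge $vt \colon b \to b''$, that is, with $T(vt) f = f'' w$, there is a unique $s \colon a \to a'$ with $us = w$ and $f' s = T(t) f$. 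Here we have used that the factorisation $n = kh$ of the right edge is forced to be $n = vt$, so that the universal property is quantified over all $t \colon b \to b'$ and all such $\beta$. Uniqueness of the cell $\hat\beta$ amounts to uniqueness of $s$, again because cells are determined by their boundaries.

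\textbf{Pullback implies cartesian.} Suppose the square is a pullback. Given $t$ and $w$ with $T(v) T(t) f = f'' w$, we set $x = T(t) f \colon a \to Tb'$, as in the rightmost diagram. Then $T(v)x = f''w$, so the pullback yields a unique $s$ with $us = w$ and $f's = x$, which is exactly the required factorisation.

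\textbf{Cartesian implies pullback.} Conversely, suppose $\alpha$ is cartesian, and let $w \colon a \to a''$ and $x \colon a \to Tb'$ satisfy $T(v)x = f''w$. We apply cartesianness with $b = b'$, $t = 1_{b'}$ and $f = x$. The hypothesis $T(v \cdot 1)x = f''w$ says precisely that $\beta$ is a cell, and the resulting unique $s$ satisfies $us = w$ and $f's = x$. Any other $s'$ with these two properties also gives a factorisation of $\beta$ through $\alpha$ along $t = 1$, so $s' = s$ by uniqueness. Hence the square is a pullback.

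The only point needing care is this converse direction, where the competitors for the pullback are arbitrary maps $x$ into $Tb'$, not necessarily of the form $T(t)f$. The choice $t = 1$, $f = x$ shows every such $x$ is realised, so no hypotheses on $\mathsf{T}$ are needed and the lemma holds for an arbitrary monad.
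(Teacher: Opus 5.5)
Your proof is correct and is essentially the paper's argument. The paper likewise reads cartesianness as unique factorisation against competitors of the form $T(t)f$, so that the pullback property implies it, and recovers the pullback property by taking $t = 1_{b'}$; you simply spell out the unfolding (cells are determined by their boundaries, and one sets $x = T(t)f$) that the paper leaves to its diagrams.
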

\begin{proof}
  To say that this square is $\mathsf{cod}$-cartesian is to say that,
  given the solid part of the diagram middle above, there is a unique
  $s$ as shown making everything commute. On the other hand, to say
  that it is a pullback is to make the corresponding claim about the
  diagram right above. Clearly the latter implies the former, and the
  former implies the latter on taking $t = 1_{b'}$.
\end{proof}

\begin{theorem} \label{thm:tspans} If $\mathsf{T}$ is a cartesian
  monad on a category $\E$ with pullbacks, then
  $\KK$ is co-crossed, and its double category of
  corners $\KK^{\mathrm{hop}} \bt \Sq(\E)$ is the
  double category of $\mathsf{T}$-spans.
\end{theorem}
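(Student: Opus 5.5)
Our plan has two parts: first verify the three conditions of \cref{def:cocrossed} for $\KK$ using \cref{lem:cod-cartesian-pullbacks}, then identify the composite of \cref{prop:fibred} with the $\mathsf{T}$-spans of \cref{def:tspans}.

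\textbf{Co-crossedness.} Given a horizontal arrow $f'' \colon a'' \to Tb''$ and a vertical arrow $v \colon b' \to b''$, we take the pullback of $f''$ along $Tv$ in $\E$. By \cref{lem:cod-cartesian-pullbacks} this is a cartesian lift, so $\cod$ is a fibration.

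The horizontal identity on $a$ is $\eta_a$, and the identity cell at $s \colon a \to a'$ is the naturality square of $\eta$ at $s$. This square is a pullback because $\mathsf{T}$ is cartesian, so horizontal identities are cartesian.

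The horizontal composite of cells $(f \colon a \to Tb,\ g \colon b \to Tc)$ over $(s,t,r)$ is the Kleisli composite $\mu_c \circ Tg \circ f$. The corresponding square is the vertical pasting of three squares in $\E$:
\begin{itemize}
\item the original square for $f$;
\item $T$ applied to the square for $g$;
\item the naturality square of $\mu$ at $r$.
\end{itemize}
If the two given cells are pullbacks, then the first square is a pullback by hypothesis, the second because $T$ preserves pullbacks, and the third because $\mu$ is cartesian. Their pasting is therefore a pullback, and hence cartesian by \cref{lem:cod-cartesian-pullbacks}.

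\textbf{Identifying the composite.} By \cref{def:cnr} and \cref{sec:composing}, a horizontal arrow of $\Cnr\KK$ from $a$ to $b$ is a formally composable pair consisting of a horizontal arrow $a \to x$ of $\KK^{\mathrm{hop}}$ followed by a horizontal arrow $x \to b$ of $\Sq(\E)$. This is a Kleisli map $f \colon x \to Ta$ together with a map $g \colon x \to b$, which is exactly a $\mathsf{T}$-span.

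A cell of the composite is a pair: a $\KK$-cell from $f$ to $f'$ over $(q,s)$, meaning $Ts \circ f = f' \circ q$, together with a commuting square $t g = g' q$. This is precisely the diagram~\eqref{eq:tspan-cell}. Vertical composition is componentwise on both sides.

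For horizontal composition we use~\eqref{eq:hcomp-bowtie}. Given $(f,g)$ from $a$ to $b$ and $(h,k)$ from $b$ to $c$, we apply $\delta$ to the middle pair $(g, h)$, which is the chosen cartesian lift of $h \colon y \to Tb$ at $g$. Concretely, it is the pullback $Tx \times_{Tb} y$ of $h$ along $Tg$, with projections $h_g$ to $Tx$ and $g_h$ to $y$. The composite span is then:
\begin{itemize}
\item the Kleisli composite of $f$ and $h_g$, namely $\mu_a \circ Tf \circ h_g$, as the left leg;
\item $k \circ g_h$ as the right leg.
\end{itemize}
This matches \cref{def:tspans} exactly.

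Horizontal identities are $(\id_a, 1_a)$, that is, the span $\eta_a \colon Ta \leftarrow a \to a \colon 1_a$. Composition of cells is likewise induced by the pullback's universal property, as $\hat\beta$ is in \cref{prop:fibred}.

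\textbf{Main obstacle.} The substantive step is matching the constraint cells. The $\alpha$, $\lambda$, $\rho$ of the composite are built from $\omega_1$–$\omega_4$, the unique isomorphisms between cartesian lifts, together with the associativity and unit isomorphisms of $\E_{\mathsf T}$, which are identities. Leinster's constraints are also the unique isomorphisms between pullbacks induced by cartesianness. Since cells of $\Cnr\KK$ are determined by their middle map $q$, and both candidate constraint maps commute with the same legs into a pullback, we expect them to agree by uniqueness. Because the Hazy points in \cref{def:tspans} admit no other sensible reading, we would in fact take this as the definition, concluding equality of double categories.
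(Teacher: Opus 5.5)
Your proposal is correct and follows essentially the same route as the paper: you obtain co-crossedness from the identification of $\cod$-cartesian cells with pullbacks, use the naturality square of $\eta$ for horizontal identities, and paste the $f$-square, $T$ of the $g$-square and the $\mu$-naturality square for composites (the paper draws these side by side rather than stacked, which is immaterial). You then read off the arrows, cells and composition of the corners double category from the formula for horizontal composition in a composite, and, like the paper, settle the coherence cells by observing that on both sides they are the comparison isomorphisms between pullbacks, i.e.\ the unique maps compatible with the pullback projections.
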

\begin{proof}
  Since $\E$ has all pullbacks, it follows immediately from
  \cref{lem:cod-cartesian-pullbacks} that the codomain functor of
  $\KK$ is a fibration. To show stability of cartesian squares under horizontal
  identities and composition, consider the following two diagrams. To
  the left is the horizontal identity cell on $s$, which is a pullback
  because $\eta$ is cartesian. To the right is shown the horizontal
  composite of two cartesian squares, which is itself cartesian
  because $T$ preserves pullbacks and $\mu$ is cartesian.
\[
\begin{tikzcd}
    a  \arrow{r}{\eta_a} \arrow[swap]{d}{s} & 
    Ta \arrow{d}{Ts} \\
    b \arrow[swap]{r}{\eta_b} & 
    Tb
\end{tikzcd}
\qquad\qquad
\begin{tikzcd}
    a  \arrow{r}{f} \arrow[swap]{d}{s} \pb{dr} &
    Tb  \arrow{r}{Tg} \arrow{d}{Tt} \pb{dr} &
    T^2c \arrow{r}{\mu_c} \arrow{d}{T^2u} \pb{dr} &
    Tc \arrow{d}{Tu} \\
    a'  \arrow[swap]{r}{f'}  &
    Tb'  \arrow[swap]{r}{Tg'} &
    T^2c' \arrow[swap]{r}{\mu_{c'}} &
    Tc' \rlap{ .}
\end{tikzcd}
\]
This proves that $\KK$ is co-crossed, and so admits
a double category of corners $\KK^{\mathrm{hop}} \bt \Sq(\E)$. By
definition, this has as horizontal arrows from $a$ to $b$, spans of
the form $f \colon Ta \leftarrow x \rightarrow b \colon g$, and as
cells, diagrams of the form~\eqref{eq:tspan-cell}; moreover, we can
read off from~\eqref{eq:hcomp-bowtie} that the composition of such
horizontal arrows and cells is exactly that of
Definition~\ref{def:tspans}; and the coherence cells agree, since on
both sides they are the comparison isomorphisms between pullbacks.
\end{proof}

In fact, the first part of this theorem can be strengthened to an
if-and-only-if:

\begin{theorem}\label{thm:T-cart}
  Let $\mathsf{T}$ be any monad on any category $\E$. The double category
  $\KK$ is co-crossed if and only if $\mathsf{T}$ is cartesian and
  $\E$ has pullbacks.
\end{theorem}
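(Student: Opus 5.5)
The plan is to prove the two directions separately. The ``if'' direction is exactly the first half of \cref{thm:tspans}, so all the work is in ``only if''. Suppose $\KK$ is co-crossed. Throughout, I will use \cref{lem:cod-cartesian-pullbacks}: a cell of $\KK$ is cartesian precisely when its underlying square in $\E$ is a pullback. I will also use the standard pullback pasting lemma.

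\emph{Step 1: $\eta$ is cartesian.} For a vertical arrow $v \colon d \to c$, the horizontal identity cell on $v$ is the naturality square $\eta_c v = Tv\,\eta_d$. Since horizontal identities are assumed cartesian, the lemma makes every naturality square of $\eta$ a pullback.

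\emph{Step 2: $\E$ has pullbacks.} Take a cospan $x \colon a \to c \leftarrow d \colon v$. Regard $\eta_c x \colon a \to Tc$ as a horizontal arrow $a \tor c$ and take its chosen cartesian lift at $v$. By the lemma, this lift is a pullback $P$ of $\eta_c x$ along $Tv$, with projections $p \colon P \to a$ and $q \colon P \to Td$. Now $Tv\, q = \eta_c x p$, and the $\eta$-square for $v$ is a pullback by Step~1. So $q$ factors as $\eta_d q'$ for a unique $q' \colon P \to d$ with $v q' = x p$. The outer rectangle is a pullback and the right-hand square (the $\eta$-square) is a pullback, so the pasting lemma makes the left-hand square $(p, q')$ a pullback of $x$ along $v$.

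\emph{Step 3: $\mu$ is cartesian and $T$ preserves pullbacks.} Both come from closure of cartesian cells under horizontal composition, applied to cleverly chosen cells. For $u \colon c \to c'$, the identity map of $Tc$ is a horizontal arrow $Tc \tor c$. The square with horizontal sides $1_{Tc}, 1_{Tc'}$ and vertical sides $Tu, Tu$ is trivially a pullback, hence cartesian; likewise for $T^2c \tor Tc$ with verticals $T^2u, Tu$. Their horizontal composite has horizontal arrow $\mu_c \circ T(1) \circ 1 = \mu_c$ and verticals $T^2u$ and $Tu$, i.e.\ it is the naturality square of $\mu$ at $u$. It is cartesian by hypothesis, hence a pullback.

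For preservation of pullbacks, take a pullback square $tg = hs$ in $\E$ with $g \colon a \to b$, $s \colon a \to a'$, $t \colon b \to b'$. Pasting it with the $\eta$-square for $t$ (a pullback by Step~1) shows that the Kleisli cell with horizontal arrow $\eta_b g$ and verticals $s, t$ is a pullback, hence cartesian. Precompose horizontally the trivially cartesian cell with horizontal arrow $1_{Ta} \colon Ta \tor a$ and verticals $Ts, s$. The composite has horizontal arrow $\mu_b T(\eta_b g) = Tg$ and verticals $Ts, Tt$, so it is the $T$-image of the square. Being cartesian, it is a pullback. Since $\E$ has all pullbacks by Step~2, this shows $T$ preserves them.

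The main obstacle is Step~3: computing correctly what horizontal composition in $\KK$ does to these particular cells, i.e.\ confirming that the Kleisli composites reduce to $\mu_c$ and to $Tg$ via the monad laws. The rest is pullback pasting.
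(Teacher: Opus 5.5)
Your proposal is correct and follows essentially the same route as the paper. Horizontal identity cells give cartesianness of $\eta$; the cartesian lift of $\eta_c x$ at $v$, together with left-cancellation of pullbacks, gives pullbacks in $\E$; and horizontal composites with the counit cells $1_{Tx} \colon Tx \tor x$ give cartesianness of $\mu$ and preservation of pullbacks by $T$. The only difference is the order: you establish existence of pullbacks before cartesianness of $\mu$, whereas the paper does it the other way round.
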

\begin{proof}
  For the direction not covered by \cref{thm:tspans}, suppose that
  $\KK$ is co-crossed. Firstly its horizontal identity cells,
  which are cells of the following form in $\E$:
  \begin{equation*}
    \begin{tikzcd}
      x \arrow{r}{\eta_x} \arrow[swap]{d}{v} &
      Tx \arrow{d}{Tv} \\
      z \arrow[swap]{r}{\eta_z} &
      Tz
    \end{tikzcd}
  \end{equation*}
  are $\cod$-cartesian, and so, by \cref{lem:cod-cartesian-pullbacks},
  are pullbacks in $\E$. Thus $\eta \colon 1_\E \Rightarrow T$ is a
  cartesian natural transformation. Next, write
  $\epsilon_x \colon Tx \tor x$ for the horizontal arrow of $\KK$
  given by $1_{Tx}$, i.e., the counit of the Kleisli adjunction, and
  observe that the trivial pullback square in $\E$ to the right of:
  \begin{equation*}
    \begin{tikzcd}
      Tx \arrowt{r}{\epsilon_x} \arrow[swap]{d}{Tv} &
      x \arrow{d}{v} &&
      Tx \arrow{r}{\id} \arrow[swap]{d}{Tv} &
      Tx \arrow{d}{Tv} \\
      Tz \arrowts{r}{\epsilon_z} &
      z&&
      Tz \arrow[swap]{r}{\id} &
      Tz
    \end{tikzcd}
  \end{equation*}
  corresponds by \cref{lem:cod-cartesian-pullbacks} to a $\cod$-cartesian
  cell in $\KK$, as to the left. Since $\cod$-cartesian cells
  are closed under horizontal composition in $\KK$, the composite cell
  to the left in:
  \begin{equation*}
    \begin{tikzcd}
      T^2x \arrowt{r}{\epsilon_{Tx}} \arrow[swap]{d}{T^2v} &
      Tx \arrowt{r}{\epsilon_x} \arrow[swap]{d}{Tv} &
      x \arrow{d}{v} & & 
      T^2x \arrow{r}{\mu_x} \arrow[swap]{d}{T^2v} &
      Tx \arrow{d}{Tv} \\
      T^2z \arrowts{r}{\epsilon_{Tz}} &
      Tz \arrowts{r}{\epsilon_z} &
      z&&
      T^2z \arrow[swap]{r}{\mu_z} &
      Tz
    \end{tikzcd}
  \end{equation*}
  is thus cartesian; so the corresponding square in $\E$ is a
  pullback. But this square is a naturality square of $\mu$, as
  displayed right above, which is thus a cartesian natural
  transformation.

  Next we show that $\E$ has pullbacks. Given maps
  $t \colon y \rightarrow z \leftarrow x \colon v$ in $\E$, take the
  cartesian lift of $\eta_z t$ at $v$, as to the left
  below, and look at the corresponding pullback in $\E$: the outer
  square to the right.
  \begin{equation*}
    \begin{tikzcd}
      \cdot \arrowts{r}{s} \arrow[swap]{d}{u} 
      &
      x \arrow{d}{v} &&
      \cdot \arrow[bend left]{rr}{s} \arrow[swap]{d}{u} \arrow[dotted]{r}
      & x \arrow{r}{\eta_x} \arrow[swap]{d}{v} &
      Tx \arrow{d}{Tv} \\
      y \arrowts{r}{\eta_z t} &
      z&&
      y \arrow[swap]{r}{t} & 
      z \arrow[swap]{r}{\eta_z} & 
      Tz
    \end{tikzcd}
  \end{equation*}
  Since $\eta$ is cartesian, $s$ factorises through $\eta_x$ as shown,
  and then the left square is a pullback because pullbacks are
  left-cancellable.
  Finally, we show that $T$ preserves pullbacks. Suppose that
  $g {\hat v} = v {\hat g}$ is a pullback in $\E$ and observe that the
  following two diagrams are pullbacks.
  \begin{equation*}
    \begin{tikzcd}
      Tw \arrow{r}{\id} \arrow[swap]{d}{T{\hat v}} &
      Tw \arrow{d}{T{\hat v}} \\
      Ty \arrow[swap]{r}{\id} &
      Ty
    \end{tikzcd}
    \qquad\qquad
    \begin{tikzcd}
      w \arrow{r}{\hat g} \arrow[swap]{d}{\hat v} &
      x \arrow{r}{\eta_x} \arrow[swap]{d}{v} &
      Tx \arrow{d}{Tv} \\
      y \arrow[swap]{r}{g} & 
      z \arrow[swap]{r}{\eta_z} & 
      Tz\rlap{ .}
    \end{tikzcd}
  \end{equation*}
  So the corresponding cells of $\KK$ are cartesian. Composing these
  horizontally, as to the left below, yields another cartesian square,
  whose corresponding pullback square in $\E$ is the $T$-image of our
  original pullback, as right below.
  \begin{equation*}
    \begin{tikzcd}
      Tw \arrowt{r}{\epsilon_w} \arrow[swap]{d}{T\hat v} &
      w \arrowt{r}{\eta_x {\hat g}} \arrow[swap]{d}{\hat v} &
      x \arrow{d}{v} &&
      Tw \arrow{r}{T\hat g} \arrow[swap]{d}{T\hat v} &
      Tx \arrow[swap]{d}{Tv} \\
      Ty \arrowts{r}{\epsilon_y} &
      y \arrowts{r}{\eta_z g} &
      z &&
      Ty \arrow[swap]{r}{Tg} & 
      Tz 
    \end{tikzcd} \qedhere
  \end{equation*}
\end{proof}

In particular, taking $\mathsf{T}$ to be the identity monad in the
preceding two results, we have:
\begin{corollary} \label{cor:spans} A category $\E$ has pullbacks if
  and only if $\Sq(\E)$ is co-crossed; in this case, its double
  category of corners $\Sq(\E)^\mathrm{hop} \bt \Sq(\E)$ is the 
  double category $\Span(\E)$ of spans in $\E$.
\end{corollary}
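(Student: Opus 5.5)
The plan is to specialise \cref{thm:T-cart} and \cref{thm:tspans} to the identity monad $\mathsf{T} = (1_\E, 1, 1)$ on $\E$. First I check that in this case the horizontal Kleisli double category $\KK$ is just $\Sq(\E)$. Its objects and vertical arrows are those of $\E$, and its horizontal arrows $a \tor b$ are maps $a \to Tb = b$. Its cells are commuting squares in $\E$, and horizontal composition is Kleisli composition. For the identity monad, Kleisli composition is ordinary composition, since $\mu$ is an identity. So $\HKl_{\mathsf{T}}(\E) = \Sq(\E)$ on the nose; this can also be seen from the factorisation~\eqref{eq:boo-ff-kleisli}, because $F_{\mathsf{T}}$ is then the identity functor.

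Next I observe that the identity monad is automatically cartesian. The identity functor preserves whatever pullbacks exist, and each naturality square of $\eta$ and $\mu$ has two parallel identity sides. A square with two parallel sides identities is always a pullback, whatever else exists in $\E$. So the condition ``$\mathsf{T}$ is cartesian and $\E$ has pullbacks'' of \cref{thm:T-cart} reduces to ``$\E$ has pullbacks''. This gives the first claim. Alternatively, it follows directly from the example preceding \cref{prop:fibred} together with \cref{lem:cod-cartesian-pullbacks}.

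For the second claim I invoke \cref{thm:tspans}. The double category of corners $\Sq(\E)^{\mathrm{hop}} \bt \Sq(\E)$ is the double category of $\mathsf{T}$-spans for $T = 1$. I then compare this with \cref{def:tspans} taken at $\mathsf{T} = 1$:
\begin{itemize}
\item horizontal arrows are spans $a \leftarrow x \to b$;
\item cells are maps of spans lying over the vertical arrows $s$ and $t$;
\item horizontal identities are identity spans;
\item horizontal composition is the pullback $x \times_b y$, followed by the identity $\mu_a$.
\end{itemize}
This is exactly $\Span(\E)$ with its usual structure.

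The only point needing care is the coherence cells. In both double categories they are the canonical comparison isomorphisms between different choices of pullback. Since these comparisons are unique, the two sets of constraints coincide. I expect no real obstacle: the whole argument is the bookkeeping check that the identity-monad specialisation is literal equality, not merely equivalence.
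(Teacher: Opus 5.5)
Your proposal is correct and follows the paper's own route: the corollary is obtained by specialising \cref{thm:T-cart} and \cref{thm:tspans} to the identity monad. Your checks are the right ones: that $\HKl_{\mathsf{T}}(\E)$ is then literally $\Sq(\E)$, that the identity monad is trivially cartesian, and that identity-monad $\mathsf{T}$-spans are ordinary spans composed by pullback.
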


\section{Calibrated categories}\label{sec:calibrated_cats}

Our second main example, given in \cref{sec:polynomial_spans},
reconstructs the double category $\Poly(\E)$ of polynomials in a
locally cartesian closed category $\E$ as the double category of
corners of a co-crossed double category; as explained in the introduction, it does
so by way of Ross's calibrated bicategories~\cite{Street2020}. As a
warm-up, in this section we re-express calibrations of categories, in
the sense of B\'enabou~\cite{benabou1975theories}, in terms of
co-crossed double categories.

The starting point is a generalisation of the construction
of~\eqref{eq:boo-ff-kleisli}. To any identity-on-objects functor
$J \colon \A \rightarrow \M$ between categories, we may associate a
double category $\mathsf{H}J$ as the interposing double category in the
internal (bijective on objects, fully faithful) factorisation of
$\Sq(J)$:
\begin{equation*}
  \Sq(\A) \rightarrow \mathsf{H}J \rightarrow \Sq(\M)\rlap{ .}
\end{equation*}
Explicitly, $\mathsf{H}J$ has the same objects and vertical arrows
as $\A$, as horizontal arrows, those of $\M$, and as cells,
commuting squares in $\M$ whose vertical edges are of the form $f_*$.
In particular, when
$J = F_\mathsf{T} \colon \E \rightarrow \E_\mathsf{T}$ is the free
functor into a Kleisli category, we re-find the horizontal Kleisli
double category $\KK$ of the preceding section. We will characterise
$\mathsf{H}J$'s being co-crossed in terms of a generalisation of
B\'enabou's notion of calibration:

\begin{definition}\label{def:calibration}
  Consider an identity-on-objects functor $J \colon \A \to \M$, and
  write $f_\ast \defeq J f$ for each morphism of $\A$. We say that
  $J$ is a \emph{calibration} of $\M$ when it satisfies the following
  conditions:
  \begin{enumerate}[label=(R\arabic*), start=1]
  \item \label{Rlc} Every map of $\A$ is cartesian with respect to
    $J$. More precisely, for all $p \in \M(x,y)$, $q \in \A(y,z)$ and
    $r \in \A(x,z)$, if $r_* = q_*p$ then there is a unique $p' \in \A(x,y)$
    with $p = p'_*$ and $r = qp'$.
  \item \label{Rpb} For all $n \in \M(y,z)$ and $p \in \A(x,z)$
    there exist $\tilde n \in \M(w,x)$ and $\tilde p \in \A(w,y)$
    making the following diagram a pullback in $\M$:
    \begin{equation*}
      \begin{tikzcd}
        w \arrow{r}{\tilde n} \arrow[swap]{d}{\tilde p_*} &
        x \arrow{d}{p_*} \\
        y \arrow[swap]{r}{n} &
        z\rlap{ .}
      \end{tikzcd}
    \end{equation*}
  \end{enumerate}
\end{definition}
To recapture B\'enabou's original definition
in~\cite{benabou1975theories}, one takes the functor $J$ to be the
inclusion of a wide subcategory, so that the data of $J$ is that of a
class of maps $\P$ in $\M$ closed under binary and nullary
composition. Condition \ref{Rpb} then asserts stability of this class under
pullback, while \ref{Rlc} says that it satisfies the left two-for-three
property: if $r = qp$ in $\M$ with $r,q \in \P$, then $p \in \P$ also.
(B\'enabou also assumes the existence of a terminal object in $\M$,
and that the class $\P$ contains the isomorphisms, but these are
inessential details.)

\begin{theorem}\label{lemma:cocrossd-calib1}
An identity-on-objects functor $J \colon \A \to \M$ is a calibration if and only if $\mathsf{H}J$ is co-crossed.
\end{theorem}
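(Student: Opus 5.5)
The plan is to characterise the $\cod$-cartesian cells of $\mathsf{H}J$ directly, generalising \cref{lem:cod-cartesian-pullbacks}, and then read each clause of \cref{def:cocrossed} against \ref{Rlc} and \ref{Rpb}. A cell of $\mathsf{H}J$ with top $f' \colon a' \to b'$, bottom $f'' \colon a'' \to b''$ and vertical edges $u \colon a' \to a''$, $k \colon b' \to b''$ in $\A$ is a commuting square $k_* f' = f'' u_*$ in $\M$. Unwinding the definition, it is cartesian just when the following holds: for every $w \in \A(a, a'')$, $t \in \A(b, b')$ and $f \in \M(a, b)$ with $f'' w_* = k_* t_* f$, there is a unique $s \in \A(a, a')$ with $us = w$ and $f' s_* = t_* f$.

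\textbf{($\Rightarrow$)} Assume $J$ is a calibration. I will first show that, in this case, a cell is cartesian if and only if it is a pullback in $\M$.

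\emph{Pullbacks are cartesian.} Given the data above, the pullback yields a unique $p \colon a \to a'$ in $\M$ with $u_* p = w_*$ and $f' p = t_* f$. Then \ref{Rlc} gives a unique $s \in \A$ with $s_* = p$ and $us = w$. Uniqueness of $s$ in $\A$ also follows from \ref{Rlc}: if $s_* = s'_*$ and $us = us'$, the uniqueness clause forces $s = s'$.

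\emph{Cartesian cells are pullbacks.} By \ref{Rpb}, every pair $(n, p)$ admits a pullback square with neat left leg, and by the previous step this square is a cartesian cell. Hence $\cod$ is a fibration. Any other cartesian lift of the same pair differs from this one by a vertically invertible cell, so it is again a pullback.

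With this characterisation the remaining clauses are immediate.
\begin{itemize}
\item A horizontal identity cell is the square with identity horizontals and both vertical edges equal to $q_*$, which is trivially a pullback in $\M$.
\item Horizontal composites of cartesian cells are pastings of pullback squares, hence pullbacks.
\end{itemize}

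\textbf{($\Leftarrow$)} Assume $\mathsf{H}J$ is co-crossed.

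\emph{\ref{Rlc} from cartesian identities.} Given $p \in \M(x, y)$, $q \in \A(y, z)$ and $r \in \A(x, z)$ with $r_* = q_* p$, form the cell $\beta$ with top $p$, bottom $\id_z$, left edge $r$ and right edge $q$. Apply the universal property of the (cartesian) horizontal identity cell on $q$, using the factorisation $q = q \cdot 1_y$ of the right edge. This produces a unique $\hat p \in \A(x, y)$ with $\hat p_* = p$ and $q \hat p = r$, which is exactly \ref{Rlc}. This is the sense in which \ref{Rlc} is precisely cartesianness of horizontal identities.

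\emph{\ref{Rpb} from the fibration.} For $n \in \M(y, z)$ and $p \in \A(x, z)$, take the cartesian lift $\dagger$ of $n$ at $p$, with top $\tilde n$ and left edge $\tilde p$. It remains to show that the commuting square $\dagger$ is a pullback in $\M$. Consider a cone $(a, b)$ with $a \colon v \to y$ and $b \colon v \to x$ in $\M$ and $n a = p_* b$. Here $a$ need not be neat, so cartesianness cannot be applied to the cone directly; the device is horizontal composition.
\begin{itemize}
\item Take the cartesian lift of $a$ at $\tilde p$, with top $a'$ and left edge $\hat p$.
\item Horizontally compose it with $\dagger$; by hypothesis the result is cartesian, with bottom $n a$ and right edge $p$.
\item The cell with top $b$, left edge $1_v$, bottom $n a$ and right edge $p$ commutes, so it factors through this composite via a unique neat $s$ with $\hat p s = 1_v$.
\item Then $a' s_* \colon v \to w$ is the required comparison map into the pullback vertex, satisfying $\tilde n\, a' s_* = b$ and $\tilde p_*\, a' s_* = a$.
\end{itemize}

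\emph{Main obstacle.} I expect the hardest step to be uniqueness of this comparison map. Given another $m \colon v \to w$ in $\M$ with $\tilde n m = b$ and $\tilde p_* m = a$, I would try to build a cell with top $m$ and left edge $1_v$ into the cartesian lift of $a$ at $\tilde p$. Using \ref{Rlc}, already established, to lift the resulting map to a neat one, I would then apply the uniqueness part of the universal property of the composite cartesian cell to conclude $m = a' s_*$.
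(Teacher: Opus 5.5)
Your proposal is correct and follows the paper's proof essentially step for step. In ($\Rightarrow$) you show that pullback squares are cartesian via (R1), obtain the fibration from (R2), identify cartesian cells with pullbacks, and conclude by identities and pasting. In ($\Leftarrow$) you derive (R1) from cartesianness of horizontal identities, and (R2) by horizontally composing the lift of $n$ at $p$ with the lift of the non-neat leg at $\tilde p$, exactly as the paper does.

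The uniqueness step you flag goes through as you sketch it, but it needs no separate appeal to (R1). Factor the square with top $m$, left edge $1_v$, bottom $a$ and right edge $\tilde p$ through the cartesian lift of $a$ at $\tilde p$, using $\tilde p = \tilde p \circ 1_w$. This directly gives a neat $t'$ with $\hat p t' = 1_v$ and $a' t'_* = m$. Then $t' = s$ by uniqueness for the composite cartesian cell, so $m = a' s_*$.
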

\begin{proof}
  ($\Rightarrow$) Suppose that $J$ is a calibration. To show that
  $\cod$ is a fibration, let $p \in \A(x,z)$ and $n \in \M(y,z)$. By
  \ref{Rpb}, there are $\tilde n \in \M(w,x)$ and
  $\tilde p \in \A(w,y)$ making the lower square below a pullback in
  $\M$:
  \begin{equation*}
    \begin{tikzcd}
      u \arrow{r}{m} \arrow[dotted]{d}{s} \arrow[bend right, swap]{dd}{q_*} &
      v \arrow[swap]{d}{r_*} \arrow[bend left]{dd}{(pr)_*} \\
      w  \pb{dr} \arrow{r}{\tilde n} \arrow{d}{\tilde p_*}  &
      x \arrow[swap]{d}{p_*} \\
      y \arrow[swap]{r}{n} &
      z
    \end{tikzcd}
  \end{equation*}
  We claim that this square is a $\cod$-cartesian cell of
  $\mathsf HJ$. So let $m \in \M(u,v)$, $q \in \A(u,y)$ and
  $r \in \A(v,x)$ make the outer square commute. As the lower square
  is a pullback, there is a unique $s \in \M(u,w)$ with
  $\tilde p_* s = q_*$ and $\tilde n s = r_* m$; and as
  $q_* = \tilde p_* s$, \ref{Rlc} gives a unique $s' \in \A(u,w)$
  with $s'_* = s$ and $\tilde p s' = q$. So the outer square factors
  uniquely through the lower one, as required.

  This argument shows that every square of $\mathsf HJ$ which is a
  pullback in $\M$ is $\cod$-cartesian; conversely, any cartesian
  square differs from such a pullback by an invertible cell, and so is
  itself a pullback. Since pullback squares include the horizontal
  identity squares and are closed under horizontal pasting,
  $\mathsf{H}J$ is co-crossed.

  ($\Leftarrow$) Suppose that $\mathsf{H}J$ is co-crossed. As
  $\mathsf HJ \to \Sq(\M)$ is internally fully faithful, a cell of
  $\mathsf HJ$ is just a square of $\M$ together with a lifting of
  its vertical edges to $\A$, and so is determined by its boundary. We
  write $\dagger$ for a $\cod$-cartesian cell.

  For \ref{Rlc}, let $p \in \M(x,y)$, $q \in \A(y,z)$ and
  $r \in \A(x,z)$ with $r_* = q_*p$. This gives a square of
  $\mathsf{H}J$ as around the outside to the left below. The
  horizontal identity square on $q$ is cartesian, so the outer square
  factors through it via a unique cell as at the top. The left edge of
  this cell is a map $p' \in \A(x,y)$ with $p'_* = p$ and $qp' = r$,
  and is the unique such $p'$, since any other would give a second
  factorisation.
  \begin{equation*}
    \begin{tikzcd}
      x \arrow{r}{p} \arrow[dotted]{d}{p'_*} \arrow[bend right, swap]{dd}{r_*} &
      y \arrow[swap]{d}{\id_*} \arrow[bend left]{dd}{q_*} \\
      y  \cart{dr} \arrow{r}{\id} \arrow{d}{q_*}  &
      y \arrow[swap]{d}{q_*} \\
      z \arrow[swap]{r}{\id} &
      z
    \end{tikzcd}
    \qquad\qquad
    \begin{tikzcd}
      v \arrow{rr}{o} \arrow[swap, bend right]{dd}{\id_*} \arrow[dotted]{d}{t_*} &&
      x \arrow[bend left]{dd}{p_*} \arrow[swap]{d}{\id_*} \\
      u \cart{dr} \arrow{r}{\tilde m} \arrow{d}{{\tilde {\tilde p}}_*} &
      w \cart{dr} \arrow{r}{\tilde n} \arrow{d}{{\tilde p}_*} &
      x \arrow[swap]{d}{p_*} \\
      v \arrow[swap]{r}{m} &
      y \arrow[swap]{r}{n} &
      z
    \end{tikzcd}
  \end{equation*}
  For \ref{Rpb}, let $n \in \M(y,z)$ and $p \in \A(x,z)$, and let
  $\tilde n \in \M(w,x)$ and $\tilde p \in \A(w,y)$ form the cartesian
  lift of $n$ at $p$, as above right. We claim this
  square is a pullback in $\M$. So let $m \in \M(v,y)$ and
  $o \in \M(v,x)$ with $p_*o = nm$, and let $\tilde m \in \M(u,w)$ and
  $\tilde{\tilde p} \in \A(u,v)$ form the cartesian lift of $m$
  at $\tilde p$, as to the left. By hypothesis, the horizontal composite of
  these two cartesian squares is again cartesian, so that the outer
  square must factor through it. Thus, there is a unique
  $t \in \A(v,u)$ with $\tilde{\tilde p} t = \id_v$ and
  $\tilde n \tilde m t_* = o$. Now $h \defeq \tilde m t_* \in \M(v,w)$
  satisfies $\tilde n h = o$ and
  $\tilde p_* h = m \tilde{\tilde p}_* t_* = m$.

  Now suppose $h' \in \M(v,w)$ also satisfied $\tilde n h' = o$ and
  $\tilde p_* h' = m$. The square with top $h'$, bottom $m$ and
  vertical edges $\id_v$ and $\tilde p$ is then a cell with right edge
  $\tilde p = \tilde p \circ \id_w$; factoring it through the left
  cartesian square gives $t' \in \A(v,u)$ with
  $\tilde{\tilde p} t' = \id_v$ and $\tilde m t'_* = h'$. Now
  $\tilde n \tilde m t'_* = \tilde n h' = o$, so $t'$ satisfies the
  same conditions as $t$, whence $t' = t$ and $h' = h$. Thus the
  cartesian lift is a pullback in $\M$, and \ref{Rpb} holds.
\end{proof}

For us, the import of \cref{lemma:cocrossd-calib1} is that every
calibration $J$ determines a double category of corners
$\Cnr ({\mathsf{H}J})$ via the induced distributive law, whose
horizontal arrows $x \tor y$ are spans $x \leftarrow w \rightarrow y$
with left leg in $\M$ and right leg in $\A$, and whose composition is
by pullback. In several natural examples the leg in $\A$ is
conventionally placed on the left; we remedy this by passing to the
horizontal opposite $\Cnr ({\mathsf{H}J})^{\mathrm{hop}}$.

\begin{example}
  Let $\Set_{\subseteq}$ be the category of sets and subset
  inclusions, and let $J \colon \Set_{\subseteq} \to \Set$ be the
  obvious inclusion functor; it is a calibration. The underlying
  bicategory of $\Cnr ({\mathsf{H}J})^{\mathrm{hop}}$ is the locally posetal
  bicategory of partial functions.
\end{example}

\begin{example}
  Let $\Top_{lh}$ be the category of topological spaces and local
  homeomorphisms and let $J \colon \Top_{lh} \to \Top$ be the obvious
  inclusion functor; it is a calibration. The underlying bicategory of
  $\Cnr ({\mathsf{H}J})^{\mathrm{hop}}$ has as horizontal arrows $X \tor Y$ the
  topological spans $X \leftarrow E \to Y$ whose left leg is a local
  homeomorphism; these are the \emph{topological correspondences}
  of~\cite{Miller2024}, and for $X = Y$ they are Katsura's
  \emph{topological graphs}~\cite{Katsura2004}. Here \ref{Rlc} amounts
  to the familiar fact that a map of \'etale spaces over $X$ is itself
  \'etale, so a globular cell is simply a continuous map $E \to E'$
  commuting with both legs and is automatically a local homeomorphism.
\end{example}

\begin{example}
  Let $\DFib$ be the category of small categories and discrete
  fibrations and let $J \colon \DFib \to \Cat$ be the obvious
  inclusion functor; it is a calibration. Since
  $\DFib/\C \simeq [\C^\op, \Set]$, a horizontal arrow
  $\C \tor \D$ in $\Cnr ({\mathsf{H}J})^{\mathrm{hop}}$ is a presheaf $F$ on $\C$
  together with a functor $\el F \to \D$, and a globular cell is a map
  of presheaves commuting with the functors to $\D$. 
\end{example}

Finally let us observe that, by combining the results of the last two
sections, we obtain:

\begin{theorem}\label{thm:cart-equiv}
  Let $\mathsf{T}$ be a monad on a category $\E$. The following are
  equivalent:
\begin{enumerate}[label=(\roman*),itemsep=0em]
    \item $\E$ has all pullbacks and $\mathsf{T}$ is cartesian;
    \item $\KK$ is co-crossed;
    \item $F_\mathsf{T} \colon \E \rightarrow \E_\mathsf{T}$ is a calibration.
\end{enumerate}
\end{theorem}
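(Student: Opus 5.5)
The plan is to obtain this by assembling results already in hand, with no new computation. The equivalence (i)$\Leftrightarrow$(ii) is exactly \cref{thm:T-cart}: \cref{thm:tspans} supplies (i)$\Rightarrow$(ii), and the converse shows that co-crossedness of $\KK$ forces $\E$ to have pullbacks and $\mathsf{T}$ to be cartesian.

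For (ii)$\Leftrightarrow$(iii), I will use \cref{lemma:cocrossd-calib1} with $J = F_\mathsf{T} \colon \E \to \E_\mathsf{T}$. This functor is identity-on-objects, so the theorem applies and says that $F_\mathsf{T}$ is a calibration if and only if $\mathsf{H}F_\mathsf{T}$ is co-crossed.

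It then remains to identify $\mathsf{H}F_\mathsf{T}$ with $\KK$. Both are defined as the interposing double category in the internal (bijective on objects, fully faithful) factorisation of $\Sq(F_\mathsf{T}) \colon \Sq(\E) \to \Sq(\E_\mathsf{T})$, as observed in the paper right after the definition of $\mathsf{H}J$. For a careful check one can also compare them explicitly:
\begin{itemize}
\item In both, the objects and vertical arrows are those of $\E$.
\item In both, the horizontal arrows are maps $a \to Tb$, i.e.\ arrows of $\E_\mathsf{T}$.
\item A cell in $\mathsf{H}F_\mathsf{T}$ is a commuting square in $\E_\mathsf{T}$ with vertical edges $(F_\mathsf{T}s, F_\mathsf{T}t)$. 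Unwinding Kleisli composition, the composite $F_\mathsf{T}t \circ f$ is $Tt \circ f$, and $f' \circ F_\mathsf{T}s$ is $f's$, using that $\mu\circ T\eta = 1$. So such a cell is precisely a commuting square $Tt\circ f = f' s$ in $\E$, which is a cell of $\KK$.
\end{itemize}
Horizontal composition and identities agree as well, since both are inherited from $\Sq(\E_\mathsf{T})$.

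The only step requiring care is this identification of $\mathsf{H}F_\mathsf{T}$ with $\KK$, specifically the Kleisli-composition unwinding. It is immediate from the definitions, so I expect no genuine obstacle: the theorem is a direct corollary of \cref{thm:T-cart} and \cref{lemma:cocrossd-calib1}.
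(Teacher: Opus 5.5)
Your proposal is correct and matches the paper's proof: (i)$\Leftrightarrow$(ii) by \cref{thm:T-cart}, and (ii)$\Leftrightarrow$(iii) by \cref{lemma:cocrossd-calib1} together with the identification of $\mathsf{H}(F_\mathsf{T})$ with $\KK$. One small slip in your unwinding: $F_\mathsf{T}t \circ f = Tt \circ f$ uses $\mu \circ T\eta = 1$, whereas $f' \circ F_\mathsf{T}s = f's$ instead uses naturality of $\eta$ together with the other unit law $\mu \circ \eta T = 1$.
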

\begin{proof}
(i) $\iff$ (ii) by \cref{thm:T-cart}, while
(ii) $\iff$ (iii) by \cref{lemma:cocrossd-calib1} and the
identification of $\mathsf{H}(F_\mathsf{T})$ with $\KK$.
\end{proof}

\section{Calibrated bicategories}
\label{sec:calibr-bicat}
Having understood the relation of categorical calibrations to
co-crossed double categories and distributive laws, we now do the same
for the bicategorical calibrations of~\cite{Street2020}. Our
definitions are not quite the same as Ross's, but the differences, as
we will see, are fairly minor.

\begin{definition}
  \label{def:groupoid-fibration}
  A functor between categories $p \colon \E \rightarrow \B$ is a
  \emph{groupoid fibration} when the following square is a bipullback
  of categories
  \begin{equation*}
    \begin{tikzcd}
        \E^\mathbf{2} \arrow{r}{\cod} \arrow[swap]{d}{p^\mathbf{2}} &
        \E \arrow{d}{p}\\
        \B^\mathbf{2} \arrow[swap]{r}{\cod} &
        \B \rlap{ .}
      \end{tikzcd}
  \end{equation*}
  A morphism $p \colon y \rightarrow z$ in a bicategory $\M$ is a
  groupoid fibration when for all $x \in \M$, the functor
  $\M(x, p) \colon \M(x,y) \rightarrow \M(x,z)$ is a groupoid
  fibration.
\end{definition}

This repackages the definition of~\cite[\S3]{Street2020}: essential
surjectivity of the comparison from $\E^\mathbf{2}$ into the
pseudo-pullback of $p^\mathbf{2}$ and $\cod$ is Ross's condition~(i),
and its full fidelity says that every morphism of $\E$ is cartesian for
$p$, his condition~(ii).

\begin{definition}\label{def:calibratedbicategory}
  Let $J \colon \A \to \M$ be an identity-on-objects pseudofunctor
  from a category to a bicategory. Writing $f_* \defeq Jf$ for
  any $1$-cell $f$ in $\A$, and $\gamma \colon (gf)_* \cong g_* f_*$
  for the coherence data of $J$, we say that $J$ is a
  \emph{calibration} of $\M$ when it satisfies the following:
  \begin{enumerate}[label=(C\arabic*), start=1,itemsep=0em]
  \item \label{C1} For each $q \in \A(y,z)$ and each object $x$, the
    following square is a bipullback in $\Cat$:
    \begin{equation*}
      \begin{tikzcd}
        \A(x,y) \arrow{r}{q \circ (\thg)} \arrow[swap]{d}{J} &
        \A(x,z) \arrow{d}{J}\\
        \M(x,y) \arrow[swap]{r}{q_* \circ (\thg)} &
        \M(x,z)
        \arrow[from=1-1,to=2-2,phantom,"\cong_\gamma"{description}]\rlap{ ;}
      \end{tikzcd}
    \end{equation*}
  \item \label{C2} Each $f_*$ is a groupoid fibration in $\M$;
  \item \label{C3} For all $n \in \M(y,z)$ and $p \in \A(x,z)$ there
    exist $\tilde n \in \M(w,x)$ and $\tilde p \in \A(w,y)$ and an
    invertible $2$-cell making the following diagram a bipullback in
    $\M$:
    \begin{equation*}
      \begin{tikzcd}
        w \arrow{r}{\tilde n} \arrow[swap]{d}{{\tilde p}_*} &
        x \arrow{d}{p_*}\\
        y \arrow[swap]{r}{n} &
        z \arrow[from=1-1,to=2-2,phantom,"\cong"{description}]\rlap{ .}
      \end{tikzcd}
    \end{equation*}
  \end{enumerate}
\end{definition}

\begin{remark}
  We have deliberately aligned our definition of calibrated bicategory
  with that of calibrated category. Indeed, if $\M$ in
  Definition~\ref{def:calibratedbicategory} is a $1$-category viewed as
  a locally discrete $2$-category then a pseudofunctor
  $J \colon \A \to \M$ is just a functor, and being a calibration in
  the sense of
  Definition~\ref{def:calibratedbicategory} reduces to being one in
  the sense of Definition~\ref{def:calibration}. In particular, the
  bipullback in \ref{C1} is then a pullback of sets, which is exactly
  \ref{Rlc}; bipullbacks in a $1$-category are ordinary pullbacks, so
  \ref{C3} is \ref{Rpb}; and \ref{C2} becomes trivial because all
  arrows of a $1$-category are groupoid fibrations.
\end{remark}

\begin{remark}\label{rem:C1}
  \ref{C1} asserts that each map of $\A$ is cartesian with respect to
  $J$, in the bicategorical sense; its universal property unpacks into the following two statements.
  \begin{itemize}[itemsep=0em]
  \item \emph{Existence}: for all $p \in \M(x,y)$, $q \in \A(y,z)$,
    $r \in \A(x,z)$ and invertible $\tau\colon r_* \Rightarrow q_*p$,
    there exist $p' \in \A(x,y)$ and an invertible
    $\tau' \colon p'_\ast \Rightarrow p$ with $r = qp'$ and
    $(q_\ast \tau')\gamma = \tau$.
  \item \emph{Uniqueness}: given $k, k' \in \A(x,y)$, $q \in \A(y,z)$
    and $\beta \colon k_* \To k'_*$, if
    $qk = qk'$ and 
    $(q_*\beta)\gamma = \gamma$ then $k = k'$ and $\beta$ is the
    identity.
  \end{itemize}
  In many of our examples, $J$ is a locally fully faithful
  pseudofunctor, so that each vertical map in~\ref{C1} is fully
  faithful; in this case, both the uniqueness clause and the $r = qp'$
  part of the existence are forced.
\end{remark}

\begin{remark}\label{rem:Jaxioms}
  We now compare our notion of bicategorical calibration
  to~\cite[Definition~8.1]{Street2020}. The starting point there is a
  class $\P$ of $1$-cells in $\M$, termed \emph{neat}. Given a
  calibration in our sense, we may define a $1$-cell $f$ of $\M$ to be
  neat when $f \cong p_\ast$ for some $p \in \A$. On doing so, we
  obtain a calibration in Ross's sense, except for the inessential
  point that our neat $1$-cells need not include all equivalences in
  $\M$. However, a calibration in Ross's sense need not arise from a
  $J \colon \A \rightarrow \M$ as in \cref{def:calibratedbicategory},
  for the following two reasons.

  Firstly, the neat maps need not arise as the essential image of an
  identity-on-objects pseudofunctor out of a $1$-category. For us,
  this choice is forced by our decision to work with double categories
  and not with something which is weakly associative in both
  directions, say \emph{doubly weak double
    categories}~\cite{FairbanksShulman2025} or \emph{double
    bicategories}~\cite{Verity1992}. That level of generality is not
  required for our examples so we have not explored it.

  Secondly, Ross restricts \ref{C3} to the case where not only is
  $p_\ast$ neat, but also $n$ lies in the class of so-called
  \emph{right lifters}. We drop this restriction because our ambit is
  narrower: we only describe the bicategory of polynomials $\Poly(\E)$
  for $\E$ locally cartesian closed. For $\E$ merely with pullbacks,
  as in~\cite{weber_polynomials}, one must restrict the morphisms of
  the bicategory of polynomials, and Ross does so precisely by way of
  the right lifters.

  The reader who wanted to recapture Ross's more general case in our
  framework could easily do so: simply take as extra data a class $\L$
  of $1$-cells of $\M$, containing the identities and closed under
  composition and invertible $2$-cells; require $n \in \L$ in
  \ref{C3}; ask that the resulting $\tilde n$ lie in $\L$; and take
  the horizontal arrows of $\mathsf{H}J$ to be the members of $\L$.
  This adds no technical complexity, but makes the story less clean;
  whence we leave such adjustments to the interested reader.
\end{remark}

The following definition generalises the construction of the double
category $\mathsf{H}J$ associated to an identity-on-objects functor
$J$ to the bicategorical case. In this situation we can no longer define
$\mathsf{H}J$ via a factorisation, but must instead do so by hand:

\begin{definition}\label{def:Jdouble}
  Let $J \colon \A \rightarrow \M$ be an identity-on-objects
  pseudofunctor from a category to a bicategory. We write
  $\mathsf{H}J$ for the double category wherein:
  \begin{itemize}[itemsep=0em]
  \item objects are those of $\A$;
  \item vertical arrows are arrows of $\A$;
  \item horizontal arrows are $1$-cells of $\M$;
  \item cells are $2$-cells in $\M$ of the form:
    \begin{equation*}
      \begin{tikzcd}
        x \arrow[""{name=top, anchor=center, inner sep=0}]{r}{m} \arrow[swap]{d}{f_*} &
        y \arrow{d}{g_*}\\
        x' \arrow[""{name=bot, anchor=center, inner sep=0}, swap]{r}{m'} &
        y'
        \arrow[Rightarrow, from=bot, to=top, shorten <=1.0em, shorten >=1.0em, "\kappa"']
        \rlap{\, .}
      \end{tikzcd}
    \end{equation*}
  \end{itemize}
  Vertical composition and identities are determined by the following
  pastings in $\M$, where the unlabelled cells are coherence data for $J$.
  \begin{equation*}
    \begin{tikzcd}[column sep = 3em]
      x \arrow[""{name=top, anchor=center, inner sep=0}]{r}{m} \arrow{d}{f_*} \arrow[swap, bend right]{dd}{(f'f)_*}&
      y \arrow[swap]{d}{g_*} \arrow[bend left]{dd}{(g'g)_*}\\
      x' \arrow[""{name=mid, anchor=center, inner sep=0}, "m'" description]{r}{} \arrow{d}{f'_*} &
      y' \arrow[swap]{d}{g'_*}\\
      x'' \arrow[""{name=bot, anchor=center, inner sep=0},swap]{r}{m''} &
      y''
      \arrow[Rightarrow, from=mid, to=top, shorten <=1.0em, shorten >=1.0em, "\kappa"']
      \arrow[Rightarrow, from=bot, to=mid, shorten <=1.0em, shorten >=1.0em, "\kappa'"']
    \end{tikzcd}
    \qquad \qquad
    \begin{tikzcd}[column sep = 5em]
      x \arrow[""{name=top, anchor=center, inner sep=0}]{r}{m} \arrow{d}{\id} \arrow[bend right, swap]{d}{\id_*} &
      y \arrow[swap]{d}{\id} \arrow[bend left]{d}{\id_*} \\
      x \arrow[""{name=bot, anchor=center, inner sep=0}, swap]{r}{m} &
      y
      \arrow[Rightarrow, from=bot, to=top, shorten <=1.0em, shorten >=1.0em, "\lambda^{-1}\rho"']
      \rlap{\, .}
    \end{tikzcd}
  \end{equation*}
  Horizontal composition and identities are determined by the following pastings in $\M$.
  \begin{equation*}
    \begin{tikzcd}
      x \arrow[""{name=topL, anchor=center, inner sep=0}]{r}{m} \arrow[swap]{d}{f_*} &
      y \arrow[""{name=topR, anchor=center, inner sep=0}]{r}{n} \arrow{d}{g_*} &
      z \arrow{d}{h_*}  & &
      x \arrow[""{name=top, anchor=center, inner sep=0}]{rr}{\id} \arrow[swap]{d}{f_*} & &
      x \arrow{d}{f_*} \\
      x' \arrow[""{name=botL, anchor=center, inner sep=0}, swap]{r}{m'} &
      y' \arrow[""{name=botR, anchor=center, inner sep=0}, swap]{r}{n'} &
      z' 
      \arrow[Rightarrow, from=botL, to=topL, shorten <=1.0em, shorten >=1.0em, "\kappa"']
      \arrow[Rightarrow, from=botR, to=topR, shorten <=1.0em, shorten >=1.0em, "\beta"']
      & & 
      x' \arrow[""{name=bot, anchor=center, inner sep=0}, swap]{rr}{\id} & &
      x'
      \arrow[Rightarrow, from=bot, to=top, shorten <=1.0em, shorten >=1.0em, "\rho^{-1}\lambda"']
      \rlap{\, .}
    \end{tikzcd}
  \end{equation*}
  Coherence for $\M$ and for $J$ makes vertical composition of cells strictly associative and unital so we obtain the category $(\mathsf{H}J)_0=\A$, the category $(\mathsf{H}J)_1$, together with functors $\dom,\cod\colon(\mathsf{H}J)_1\to(\mathsf{H}J)_0$. Horizontal composition is associative and unital up to the invertible cells induced by the associators and unitors of $\M$, whence a double category.
\end{definition}

\begin{proposition}\label{prop:HHcocrossed}
If $J \colon \A \to \M$ is a bicategorical calibration, then $\mathsf{H}J$ is co-crossed.
\end{proposition}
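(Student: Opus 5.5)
Following the proof of \cref{lemma:cocrossd-calib1}, I will show that the bipullback squares supplied by \ref{C3} are $\cod$-cartesian cells of $\mathsf{H}J$, and that every cell with invertible $2$-cell whose square is a bipullback in $\M$ is cartesian. The two stability conditions then follow from closure of bipullbacks under pasting.

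\textbf{Step 1: the lifts.} Given $n \in \M(y,z)$ and $p \in \A(x,z)$, \ref{C3} supplies $\tilde n$, $\tilde p$ and an invertible $\theta \colon p_*\tilde n \cong n\tilde p_*$ forming a bipullback. I will regard this as a cell of $\mathsf{H}J$ and show it is cartesian. Take a cell $\beta \colon p_* r_* m \Rightarrow n q_*$ (suitably reassociated, with $r_*$ combined with $p_*$ via $\gamma$), with top edge $m \colon u \to v$, left edge $q_*$ and right edge $(pr)_*$. The task is to produce a unique $s \in \A(u,w)$ together with a unique $2$-cell $\hat\beta \colon \tilde n s_* \Rightarrow r_* m$ satisfying $\tilde p s = q$ and the pasting equation.

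\textbf{Step 2: the main obstacle.} Here $\beta$ need not be invertible, so the bipullback property alone does not give the factorisation; this is where \ref{C2} enters. I first use that $p_*$ is a groupoid fibration to take a $p_*$-cartesian lift of the $2$-cell $\beta$ (viewed in $\M(u,z)$, with codomain $p_*(r_* m)$). This yields a $1$-cell $e \colon u \to x$, an invertible $2$-cell $p_*e \cong n q_*$, and a $2$-cell $\beta' \colon e \Rightarrow r_* m$ such that $p_*\beta'$ recovers $\beta$. The bipullback of Step~1 then gives $h \colon u \to w$ with $\tilde p_* h \cong q_*$ and $\tilde n h \cong e$, compatibly with $\theta$. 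By \ref{C1} applied with $q := \id$, or more directly by essential surjectivity in the bipullback of \ref{C1} for $\tilde p$, I replace $h$ up to isomorphism by $s_*$ with $\tilde p s = q$ exactly. Setting $\hat\beta$ to be $\beta'$ precomposed with $\tilde n h \cong e$ produces the required factorisation.

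For uniqueness, I will use that a groupoid fibration is fully faithful on the comparison to the pseudo-pullback: every morphism is $p_*$-cartesian, so $\beta'$ is determined. Together with the full fidelity part of the bipullback in $\M$, and the uniqueness clause of \cref{rem:C1} for $s$, this forces $(s,\hat\beta)$. I expect the delicate part to be the bookkeeping of the coherence isomorphisms $\gamma$ and unitors. The strategy is to work in the $2$-category obtained by strictifying, since all such isomorphisms are canonical.

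\textbf{Step 3: cartesian cells are exactly these.} Any cartesian cell is related to a chosen lift by a vertically invertible cell, so its $2$-cell is invertible and its square is a bipullback. Conversely, Step~2 only used the bipullback property, \ref{C1} and \ref{C2}, so every bipullback cell is cartesian. Horizontal identity cells are the unitor squares $f_* \circ \id \cong \id \circ f_*$. These are bipullbacks because the identity $1$-cell is an equivalence and bipullbacks along equivalences are trivial. Horizontal composites of cartesian cells are pastings of two bipullback squares, which are again bipullbacks by the bicategorical pasting lemma. Hence $\mathsf{H}J$ is co-crossed.
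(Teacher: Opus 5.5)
Your proposal is correct and is essentially the paper's proof. You show the (C3) bipullback cells are cartesian by using (C2) to handle the non-invertible $\beta$, the bipullback to obtain $h$, and the existence and uniqueness halves of (C1) for $\tilde p$ to replace $h$ by $s_*$ with $\tilde p s = q$ on the nose; co-crossedness then follows because cartesian cells are exactly the bipullback cells and bipullbacks are closed under horizontal pasting. The only difference is that you unpack, as a groupoid-fibration lift followed by the bipullback's universal property, what the paper obtains in one step from \cite[Prop.~5.2(iii)]{Street2020}.

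Two small slips. In Step~1 the cell should be $\beta \colon n q_* \Rightarrow (pr)_* m$, which is the direction you actually use in Step~2. And the instance of (C1) you need is the one for $\tilde p$: taking $q := \id$ in (C1) gives nothing.
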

\begin{proof}
  To show that $\cod$ is a fibration, let $p \in \A(x,z)$ and
  $n \in \M(y,z)$. By \ref{C3}, we can find $\tilde n \in \M(w,x)$,
  $\tilde p \in \A(w,y)$ and an invertible $2$-cell $\theta$ making
  the following a bipullback in $\M$:
  \begin{equation*}
    \begin{tikzcd}
      w \arrow{r}{\tilde n} \arrow[swap]{d}{\tilde p_*} &
      x \arrow{d}{p_*}\\
      y \arrow[swap]{r}{n} &
      z
      \arrow[from=1-1,to=2-2,phantom,"\cong_\theta"{description}]\rlap{ .}
    \end{tikzcd}
  \end{equation*}
  We claim that $\theta$ is $\cod$-cartesian as a cell of
  $\mathsf{H}J$. Indeed, let $\beta$ be a cell of $\mathsf{H}J$ as to
  the left of~\eqref{eq:HHcartexistence}, and write
  $\psi \defeq \alpha(\gamma m)\beta$ for the corresponding $2$-cell
  $n q_* \Rightarrow p_* (r_* m)$ in $\M$. By \ref{C2}, $p_*$ is a
  groupoid fibration, so by~\cite[Prop.~5.2(iii)]{Street2020} we may
  factor $\psi$ through the bipullback $\theta$ to obtain
  $s \in \M(u,w)$, an invertible
  $\tau \colon q_* \Rightarrow \tilde p_* s$ and
  $\sigma \colon \tilde n s \Rightarrow r_* m$ satisfying the
  left-hand equality below.

  Now by the existence half of \ref{C1} for $\tilde p$
  (Remark~\ref{rem:C1}), applied to $\tau$, we have $s' \in \A(u,w)$
  with $\tilde p s' = q$ and an invertible
  $\xi \colon s'_* \Rightarrow s$ with
  $(\tilde p_* \xi)\gamma = \tau$. So
  $\delta \defeq \sigma(\tilde n \xi)$ is a cell of $\mathsf{H}J$ as
  displayed below, and the right-hand equality says that
  $\beta = \theta\delta$ in $(\mathsf{H}J)_1$.
  \begin{equation}\label{eq:HHcartexistence}
    \begin{tikzcd}
      u \arrow[""{name=betatop, anchor=center, inner sep=0}]{r}{m} \arrow[swap]{dd}{q_*} &
      v \arrow[bend left=20]{dr}{r_*} \arrow[dd, swap, "(pr)_\ast" description] &[-2.5em] \\
      &&[-1em]
      x \arrow[bend left=20]{dl}{p_*} \\
      y \arrow[""{name=betabot, anchor=center, inner sep=0}, swap]{r}{n} &
      z &[-2.5em]
      \arrow[Rightarrow, from=betabot, to=betatop, shorten <=2.0em, shorten >=2.0em, "\beta"']
    \end{tikzcd}
    \quad=\quad
    \begin{tikzcd}
      u \arrow[""{name=sigmatop, anchor=center, inner sep=0}]{r}{m} \arrow[dotted]{d}{s} \arrow[""{name=taumid, anchor=center, inner sep=0}, bend right=50, swap]{dd}{q_*} &
      v \arrow{d}{r_*} \\
      w  \arrow[""{name=sigmabot, anchor=center, inner sep=0}]{r}{\tilde n} \arrow{d}{\tilde p_*}  &
      x \arrow{d}{p_*} \\
      y \arrow[swap]{r}{n} &
      z
      \arrow[from=2-1,to=3-2,phantom,"\cong_\theta"{description}]
      \arrow[Rightarrow, from=sigmabot, to=sigmatop, shorten <=1.0em, shorten >=1.0em, "\sigma"']
      \arrow[Rightarrow, from=taumid, to=2-1, shorten <=0.1em, "\tau"]
    \end{tikzcd}
    \quad=\quad
    \begin{tikzcd}
      u \arrow[""{name=deltatop, anchor=center, inner sep=0}]{r}{m} \arrow[dotted]{d}{s'_*} \arrow[bend right=50, swap]{dd}{q_*} &
      v \arrow{d}{r_*} \\
      w  \arrow[""{name=deltabot, anchor=center, inner sep=0}]{r}{\tilde n} \arrow{d}{\tilde p_*}  &
      x \arrow{d}{p_*} \\
      y \arrow[swap]{r}{n} &
      z
      \arrow[from=2-1,to=3-2,phantom,"\cong_\theta"{description}]
      \arrow[Rightarrow, from=deltabot, to=deltatop, shorten <=1.0em, shorten >=1.0em, "\delta"']
    \end{tikzcd}
  \end{equation}
  For uniqueness, suppose that $\delta'$ with $\dom(\delta') = t$ and
  $\cod(\delta') = r$ also satisfied $\theta\delta' = \beta$.
  By~\cite[Prop.~5.2(iii)]{Street2020}, again using \ref{C2}, there is
  a unique invertible $\nu \colon t_* \Rightarrow s'_*$ with
  $\delta' = \delta(\tilde n \nu)$ and $(\tilde p_* \nu)\gamma = \gamma$.
  By the uniqueness half of \ref{C1} for $\tilde p$, $t = s'$ and $\nu$
  is the identity, so $\delta' = \delta$. So $\beta$ factors uniquely
  through $\theta$, as required.

  This argument shows that every cell of $\mathsf{H}J$ which exhibits
  a bipullback in $\M$ is $\cod$-cartesian; conversely, any cartesian
  cell differs from such a bipullback by an invertible cell, and so is
  itself a bipullback. Since bipullback squares are closed under
  binary and nullary horizontal pasting, $\mathsf{H}J$ is co-crossed.
\end{proof}

\begin{remark}\label{rem:HHconverse}
  Unlike \cref{lemma:cocrossd-calib1}, the converse of
  \cref{prop:HHcocrossed} fails. Indeed, the proof above shows that,
  when $J$ is a bicategorical calibration, every $\cod$-cartesian cell
  of $\mathsf{H}J$ is filled by an invertible $2$-cell of $\M$.
  However, it is possible for $\mathsf{H}J$ to be co-crossed without
  this being the case, and then $J$ cannot be a bicategorical
  calibration.

  For a concrete counterexample, let $\M$ be the second oriental---the
  locally posetal $2$-category generated by morphisms
  $01 \colon 0 \rightarrow 1$, $12 \colon 1 \rightarrow 2$ and
  $02 \colon 0 \rightarrow 2$, and a $2$-cell
  $012 \colon 02 \Rightarrow 12 \circ 01$---and let
  $J \colon \A \rightarrow \M$ be the inclusion of the morphism
  $12 \colon 1 \rightarrow 2$. It is easy to see that this is not a
  calibration, since $12$ does not admit a bipullback along $02$ in
  $\M$.

  However, in $\mathsf{H}J$ the horizontal arrow $02$ \emph{does} admit a
  $\cod$-cartesian lift at $12$: the cell
  \begin{equation*}
    \begin{tikzcd}
      0 \arrow{r}{01} \arrow[swap]{d}{\id_\ast} &
      1 \arrow{d}{12_*}\\
      0 \arrow[swap]{r}{02} &
      2
      \arrow[from=1-1,to=2-2,phantom,"\Uparrow \scriptstyle 012"{description}]\rlap{ .}
    \end{tikzcd}
  \end{equation*}
  The remaining cartesian lifts are evident commuting squares, and a
  direct check shows that the cartesian cells contain the horizontal
  identities and are closed under horizontal composition.
\end{remark}

\begin{remark}
By \cref{prop:HHcocrossed}, a bicategorical calibration
$J \colon \A \rightarrow \M$ has a double category of corners
$\Cnr({\mathsf{H}J})$, in which:
\begin{itemize}[itemsep=0em]
\item Objects and vertical arrows are those of $\A$;
\item A horizontal arrow $x \tor y$ is a ``span''
  \begin{equation*}
    x \xleftarrow{\ \ m\ \ } w \xrightarrow{\ \ p\ \ } y
  \end{equation*}
  with $m \in \M(w,x)$ and $p \in \A(w,y)$; it is a ``span'' rather than a span
  because $m$ and $p$ live in different places. (Of course, there is
  an associated
  genuine span $(m, p_\ast)$ in $\M$.)
\item A cell bounded by horizontal arrows $(m,p)$ and $(m',p')$ and vertical
  arrows $f$ and $g$ is a pair
  \begin{equation}\label{eq:HHcnr-cell}
    \begin{tikzcd}
      x \arrow[swap]{d}{f_*} &
      w \arrow[""{name=top, anchor=center, inner sep=0}, swap]{l}{m} \arrow{d}{h_*} &&
      w \arrow{r}{p} \arrow[swap]{d}{h} &
      y \arrow{d}{g} \\
      x' &
      w' \arrow[""{name=bot, anchor=center, inner sep=0}]{l}{m'} &&
      w' \arrow[swap]{r}{p'} &
      y'\rlap{ ,}
      \arrow[Rightarrow, from=bot, to=top, shorten <=1.0em, shorten >=1.0em, "\kappa"']
    \end{tikzcd}
  \end{equation}
  where the left square is a $2$-cell of $\M$ and the right square
  commutes in $\A$. Vertical composition of cells is componentwise: by
  composition in $\A$ on the arrows $h$, and by vertical composition
  in $\mathsf{H}J$ on the $2$-cells $\kappa$.
\item The horizontal identity on $x$ is $(\id_x, \id_x)$;
\item Given horizontal arrows $(m,p) \colon x \tor y$ and
  $(n,q) \colon y \tor z$, their composite is the span
  $(m\tilde n, q\tilde p) \colon x \tor z$, wherein $\tilde n$ and
  $\tilde p$ are obtained via the chosen bipullback of $p_*$ along
  $n$:
\begin{equation*}
  \begin{tikzcd}[column sep=small]
    && u \arrow[swap]{dl}{\tilde n} \arrow{dr}{{\tilde p}_\ast} && \\
    & w \arrow[swap]{dl}{m} \arrow{dr}{p_\ast} &&
    v \arrow[swap]{dl}{n} \arrow{dr}{q_\ast} & \\
    x && y && z
    \arrow[from=1-3, to=3-3, phantom, "\cong_\theta"{description}]\rlap{ .}
  \end{tikzcd}
\end{equation*}
\end{itemize}
\end{remark}

\section{Second example: polynomials}\label{sec:polynomial_spans}

We now give our second main example of a double-categorical
distributive law---one whose composite is the double category of
\emph{polynomials} in a locally cartesian closed category $\E$. A
polynomial in this sense is a diagram in $\E$ of the shape:
\begin{equation}\label{eq:polynomial}
  \begin{tikzcd}
    &
    A \arrow[swap]{dl}{s} \arrow{r}{f} &
    B \arrow{dr}{t} & {}
    \\
    I & & & J \rlap{ ,}
  \end{tikzcd}
\end{equation}
where the intended interpretation of such a diagram is as the
\emph{polynomial functor}
\begin{equation*}
  \E / I \xrightarrow{\Delta_s} \E / A \xrightarrow{\Pi_f} \E / B \xrightarrow{\Sigma_t} \E / J\rlap{ .}
\end{equation*}
Here, for any arrow $f \colon X \to Y$ of $\E$, we write
\begin{itemize}[itemsep=0em]
\item $\Sigma_f \colon \E/X \to \E/Y$ for the functor which
  postcomposes with $f$;
\item $\Delta_f \colon \E/Y \to \E/X$ for the right adjoint of
  $\Sigma_f$ determined by pullback along $f$;
\item $\Pi_f \colon \E/X \to \E/Y$ for the further right
  adjoint of $\Delta_f$ given by local cartesian closedness.
\end{itemize}
In the case of the locally cartesian closed category of sets, the
polynomial functor associated to~\eqref{eq:polynomial} can be
described by the assignment:
\begin{equation*}
  (X_i \mid i \in I) \qquad \mapsto \qquad \Bigl(\,\textstyle\sum_{b \in B_j} \prod_{a \in A_{b}} X_{s(a)}
  \mid j \in J\,\Bigr)\rlap{ ;}
\end{equation*}
whose right-hand side is visibly a sum of monomials; whence 
the nomenclature. The systematic categorical treatment is due to
Gambino and Kock~\cite{GK2013}, who show that polynomials compose,
compatibly with the composition of the functors they present, and
assemble into a double category $\Poly(\E)$. It is this double
category which we seek to reconstruct. In more detail, we have:
\begin{definition}
  \label{def:poly-doublecat}
  If $\E$ is a locally cartesian closed category, then
  $\mathsf{Poly}(\E)$ is the double category in which:
  \begin{itemize}[itemsep=0em]
  \item The category of objects and vertical arrows is $\E$;
  \item Horizontal arrows from $I$ to $J$ are polynomial diagrams~\eqref{eq:polynomial};
  \item Cells bounded by horizontal arrows $(s,f,t)$ and
    $(s',f',t')$ and vertical arrows $p$ and $r$ are pairs of maps
    $(q,m)$ fitting into a commuting diagram:
    \begin{equation}\label{polycell}
      \begin{tikzcd}
        I \arrow[swap]{dd}{p} &
        A \arrow[swap]{l}{s} \arrow{r}{f} & 
        B \arrow{dd}{q} \arrow{r}{t} &
        J \arrow{dd}{r}\\
        & X \arrow{u}{m} \arrow{d}{} \arrow{ur}{} \pb{dr} & \\
        I' &
        A' \arrow{l}{s'} \arrow[swap]{r}{f'} &
        B' \arrow[swap]{r}{t'} &
        J'\rlap{ ;}
      \end{tikzcd}
    \end{equation}
  \item The horizontal identity on $I$ is the polynomial $(1,1,1)
    \colon I \leftarrow I \rightarrow I \rightarrow I$;
  \item The horizontal composite of polynomials $(s,f,t)$ and
    $(u,g,v)$ is determined by the upper part of the diagram
    \begin{equation} \label{polycomposition}
      \begin{tikzcd}
        &&& 
        N \ar[ddll] \ar[r] \pb{d} & 
        C' \arrow{dl}{\epsilon} \arrow{r}{k} \pb{ddr} \ar[dd] & 
        M \arrow{dd}{\Pi_g\Delta_u t} &\\
        &&& 
        B' \arrow[swap]{dl}{\pi} \arrow{dr}{\Delta_u t} \pb{dd}  &&& \\
        & A \arrow{dl}{s} \arrow[swap]{r}{f} & B \arrow[swap]{dr}{t} && 
        C \arrow{dl}{u} \arrow[swap]{r}{g} & 
        D \arrow[swap]{dr}{v} & \\
        I &&& J &&& K
      \end{tikzcd}
    \end{equation}
    where $\epsilon$ is the counit of the adjunction
    $\Delta_g \dashv \Pi_g$ at $\Delta_u t$.
  \end{itemize}
\end{definition}
We omit discussion of the coherence data and axioms for horizontal
composition in $\Poly(\E)$, as a direct approach is unenlightening.
The better treatment is that of~\cite[\S 2--3]{GK2013}, which
transports the horizontal structure of $\Poly(\E)$ from the equivalent
double category of polynomial functors between slices of $\E$. Our
approach, as foreshadowed in~\cref{sec:calibrated_cats}, instead
uses Ross's construction in~\cite{Street2020}, recast via
\cref{sec:calibr-bicat} in the language of distributive laws of double
categories.

We start from the bicategory $\Span(\E)$ of spans in $\E$, the
underlying bicategory of the double category of~\cref{cor:spans}. Its
objects are those of $\E$; its $1$-cells $x \to y$ are spans
$s \colon x \leftarrow w \rightarrow y \colon t$, composed by
pullback; and its $2$-cells $(s, t) \Rightarrow (s', t')$ are
maps of spans, that is, arrows $h \colon w \to w'$ with $s' h = s$
and $t' h = t$. Each arrow $f \colon x \to y$ of $\E$ gives a span
$f_* \defeq (\id_x, f)$, and this assignment provides the canonical
calibration of $\Span(\E)$:

\begin{proposition}
  \label{prop:spans-calibration}
  If $\E$ is locally cartesian closed, then the identity-on-objects
  pseudofunctor $(-)_* \colon \E \rightarrow \Span(\E)$ sending $f$
  to $(\id,f)$ is a bicategorical calibration.
\end{proposition}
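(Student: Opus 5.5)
We verify \ref{C1}, \ref{C2} and \ref{C3} in turn for $J = (\thg)_*$. The first two hold in any category with pullbacks; only \ref{C3} uses local cartesian closedness. Throughout we use the standard description of hom-categories: $\Span(\E)(x,y) \simeq \E/(x\times y)$ when products exist, or more generally spans over $(x,y)$. Postcomposition with $q_* = (\id,q)$ sends a span $(a,b)$ to $(a,qb)$.

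\textbf{(C1).} Here $J$ is injective on $1$-cells and locally discrete on $\A$. The square in \ref{C1} has top edge $q\circ(\thg)$ on the discrete category $\A(x,y)$. We use the unpacking of \cref{rem:C1}.

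For existence, suppose given a span $p = (a,b) \colon x \to y$ with apex $w$, together with an invertible span map $\tau \colon r_* = (\id,r) \Rightarrow (a, qb)$. Then $\tau$ is an isomorphism $x \to w$ with $a\tau = \id$ and $qb\tau = r$. Put $p' \defeq b\tau$. Then $\tau$ itself is an invertible map of spans $p'_* \Rightarrow p$, and $qp' = r$. Checking $(q_*\tau')\gamma = \tau$ is routine, since $\gamma$ is the canonical pullback comparison along identities.

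For uniqueness, a span map $\beta \colon k_* \Rightarrow k'_*$ is an endomorphism $h$ of $x$ with $\id\circ h = \id$, so $h = \id$ and hence $k = k'$.

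\textbf{(C2).} We must show that $(\thg)\circ f_*$, which sends $(a,b)\mapsto(a,fb)$, is a groupoid fibration on each hom. In terms of \cref{def:groupoid-fibration}, this has two parts. First, every span map is cartesian: if $h \colon (a,b)\to(a',b')$ and $k \colon (a'',b'')\to(a',b')$ satisfy $(a,fb)\to(a',fb')$ factorisations, then the factorisation is forced by the first leg, via a pullback-free argument in the slice. Second, we need essential surjectivity onto the pseudo-pullback. Given an isomorphism $(a,fb)\cong(c,d)$ we transport, and given a map $(c,d)\to(a',fb')$ with $c,d$ arbitrary we lift it by setting $b \defeq b'h$. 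Both checks are elementary diagram chases in $\E$.

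\textbf{(C3).} This is the main step. Given a span $n = (a,b) \colon y \to z$ with apex $v$, and $p \colon x\to z$ in $\E$, we must produce a bipullback in $\Span(\E)$ of $p_* = (\id,p)$ along $n$, with one leg of the form $\tilde p_*$. Following Street and the construction in \eqref{polycomposition}, we proceed as follows.
\begin{enumerate}
\item Pull back $p$ along $b$ to get $P \defeq v\times_z x \to v$.
\item Form the distributivity pullback. Take $w \defeq \Pi_a(P\to v)$, with $\tilde p \colon w \to y$, and let $u \defeq \Delta_a w$ with counit $\epsilon \colon u \to P$.
\item Define $\tilde n \defeq (u\to w,\ u \xrightarrow{\epsilon} P \to x)$ as a span $w \to x$.
\end{enumerate}
Then $\tilde p_*\circ n$-type composites compute as required. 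Composing $(\id,\tilde p)$ then $(a,b)$ gives the span with apex $w\times_y v = u$. Composing $\tilde n$ with $(\id,p)$ gives apex $u$ with leg $p\epsilon$. These agree via the pullback square defining $P$, giving the invertible $2$-cell $\theta$.

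The main obstacle is verifying the bipullback property: for each $t$, the induced functor from $\Span(\E)(t,w)$ to the iso-comma of $\Span(\E)(t,y)$ and $\Span(\E)(t,x)$ must be an equivalence. We would reduce this via the adjunctions $\Sigma\dashv\Delta\dashv\Pi$. An object of the iso-comma amounts to a span $(c,e)$ from $t$ to $y$, together with a span $t\to x$ and an isomorphism of their composites into $z$. After pulling back along $a$, this is the same as a map over $v$ into $P$ from $\Delta_a$ of the apex, that is, by $\Delta_a\dashv\Pi_a$, a map into $w$ over $y$. This is precisely a span $t\to w$. We would check this bijection on objects up to isomorphism and on morphisms, following the universal property of distributivity pullbacks (Weber, Gambino--Kock); alternatively, we can cite \cite[\S 9]{Street2020}, where this bipullback is constructed. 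Full faithfulness is where care with the groupoid-fibration property of \ref{C2} is needed, since $2$-cells in the iso-comma have components in both factors.
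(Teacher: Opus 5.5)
Your proposal is correct, and for \ref{C1} and \ref{C2} it follows the paper. Your $\tau$ is the paper's $p_0^{-1}$. For \ref{C2}, you check by hand the two conditions the paper gets by noting that postcomposition with $f_*$ is isomorphic to a discrete fibration and citing \cite[Rem.~4.2(a)]{Street2020}.

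Two small fixes in \ref{C2}. You mean $f_* \circ (\thg)$ rather than $(\thg)\circ f_*$. Your cartesianness sentence is garbled; the actual reason is that the functor is the identity on apex maps. So any lift must be the given map $g$, and $g$ is a span map because $bg = b'hg = b'k = b''$.

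The genuine difference is in \ref{C3}. Your $w = \Pi_a P$, $\tilde p$, $\tilde n$ and $\theta$ are exactly the paper's $M$, $\Pi_g\Delta_u t$, $(k,\pi\epsilon)$ and pasted isomorphism; note the right leg of $p_*\circ\tilde n$ is $p\pi\epsilon$, not $p\epsilon$. The paper proves the bipullback property modularly:
\begin{itemize}
\item it writes $n \cong u_* g^*$;
\item it gets the bipullback of $t_*$ along $u_*$ from the pullback square via \cite[Prop.~5.4]{Street2020};
\item it gets the bipullback of $(\Delta_u t)_*$ along $g^*$ from the $\Pi_g$-rectangle via \cite[Prop.~7.1]{Street2020};
\item it pastes the two.
\end{itemize}
This isolates where local cartesian closedness enters (only the $g^*$ step) and reuses Ross's general results. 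You instead verify the universal property for the whole span at once, by unpacking the iso-comma and applying $\Delta_a \dashv \Pi_a$. That is self-contained and shows the bipullback is nothing but this adjunction.

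As written, you have only done the object level of the equivalence. The morphism level is routine and, contrary to your last sentence, needs nothing from \ref{C2}. Consider two objects of the iso-comma with apexes $s_i$ and isomorphisms $\phi_i \colon \Delta_a s_i \cong s'_i$.
\begin{itemize}
\item A $2$-cell $(\alpha,\beta)$ between them has $\beta = \phi_2\,\Delta_a(\alpha)\,\phi_1^{-1}$ forced, since postcomposition with $p_*$ is the identity on apex maps.
\item Compatibility of $\beta$ with the legs to $t$ is then automatic.
\item Compatibility with the legs to $x$ says that $\Delta_a(\alpha)$ commutes with the maps $\Delta_a s_i \to P$.
\item By naturality of the $\Delta_a \dashv \Pi_a$ transposition, this says $\alpha$ commutes with the transposed maps $s_i \to w$, i.e.\ $\alpha$ is a $2$-cell of $\Span(\E)(t,w)$.
\end{itemize}
This gives full faithfulness and completes your route.
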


This is much as in~\cite{Street2020}, but since our notion of
calibration is slightly different, we give a complete proof, although
one which defers to~\cite{Street2020} where possible.
\begin{proof}
  Write $\M \defeq \Span(\E)$. For \ref{C1}, we verify the conditions
  of \cref{rem:C1}. For existence, let $q \colon y \to z$ and $r \colon x \to z$ be arrows
  of $\E$ and let $p$ be a span
  $p_0 \colon x \leftarrow w \rightarrow y \colon p_1$ with
  $\tau \colon r_* \cong q_* p$. Since the left leg of $q_*$ is an
  identity, $q_* p$ is isomorphic to the span
  $p_0 \colon x \leftarrow w \rightarrow z \colon qp_1$. The left leg of
  $r_*$ is $\id_x$, so $\tau$ makes $p_0$ invertible, and then
  $p_0^{-1}$ is an isomorphism
  $\tau' \colon (p_1 p_0^{-1})_* \cong p$ with $q p_1 p_0^{-1} = r$
  and $(q_*\tau')\gamma = \tau$. Uniqueness follows
  since $(-)_*$ is locally fully faithful: a map of spans
  $(\id, k) \to (\id, k')$ is an arrow $h$ with $\id h = \id$ and
  $k'h = k$, forcing $h = \id$ and $k = k'$.
 
  For \ref{C2}, let $f \colon y \to z$ be an arrow of $\E$. For each
  object $x$, composition with $f_*$ sends a span
  $m_0 \colon x \leftarrow w \rightarrow y \colon m_1$ to the span
  $m_0 \colon x \leftarrow w \rightarrow z \colon fm_1$, up to
  isomorphism, so it is isomorphic to a discrete fibration
  $\M(x,y) \to \M(x,z)$; hence $f_*$ is a groupoid
  fibration~\cite[Rem.~4.2(a)]{Street2020}.

  For \ref{C3}, let $t \colon B \to J$ be an arrow of $\E$ and let
  $n$ be a span $g \colon D \leftarrow C \rightarrow J \colon u$.
  Writing $h^* \defeq (h, \id)$ for the span which reverses an arrow
  $h$ of $\E$, we have $n \cong u_* g^*$. We follow Ross's proof
  of~\cite[Prop.~8.3]{Street2020}. First, form the pullback
  $\pi \colon B \leftarrow B' \rightarrow C \colon \Delta_u t$ of $t$
  and $u$, the square on the right of the left-hand diagram below. Now
  let $k \colon C' \to M$ be the pullback of
  $\Pi_g \Delta_u t \colon M \to D$ along $g$, so that the other
  projection $C' \to C$ is $\Delta_g \Pi_g \Delta_u t$, and let
  $\epsilon \colon C' \to B'$ be the counit at $\Delta_u t$ of the
  adjunction $\Delta_g \dashv \Pi_g$; these form the rectangle on the
  left of the same diagram.
  \begin{equation*}
    \begin{tikzcd}
      M \arrow[swap]{d}{\Pi_g \Delta_u t} &
      C' \arrow[swap]{l}{k} \arrow{r}{\epsilon} &
      B' \arrow{r}{\pi} \arrow[swap]{d}{\Delta_u t} \pb{dr} &
      B \arrow{d}{t} \\
      D & &
      C \arrow{ll}{g} \arrow[swap]{r}{u} &
      J
    \end{tikzcd}
    \qquad\qquad
    \begin{tikzcd}[column sep=5em]
      M \arrow{r}{\epsilon_* k^*} \arrow[swap]{d}{(\Pi_g \Delta_u t)_*} &
      B' \arrow{r}{\pi_*} \arrow{d}{(\Delta_u t)_*} &
      B \arrow{d}{t_*} \\
      D \arrow[swap]{r}{g^*} &
      C \arrow[swap]{r}{u_*} &
      J\rlap{ .}
      \arrow[from=1-1,to=2-2,phantom,"\cong"{description}]
      \arrow[from=1-2,to=2-3,phantom,"\cong"{description}]
    \end{tikzcd}
  \end{equation*}
  By~\cite[Prop.~5.4]{Street2020} the pullback square on the left gives
  the right-hand bipullback square on the right; and
  by~\cite[Prop.~7.1]{Street2020} the rectangle on the left gives the
  left-hand one. Pasting these two squares, and using
  $n \cong u_* g^*$, gives a bipullback of $t_*$ along $n$ whose legs
  are $\tilde n = (\pi\epsilon)_* k^* \colon M \to B$ and
  $(\Pi_g \Delta_u t)_*$, as \ref{C3} requires.
\end{proof}

\begin{proposition}\label{prop:poly}
  If $(-)_* \colon \E \to \Span(\E)$ is the calibration
  of~\cref{prop:spans-calibration}, then there is an
  identity-on-objects equivalence
  $\Cnr(\mathsf{H}((-)_*)) \simeq \Poly(\E)$.
\end{proposition}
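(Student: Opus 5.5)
We construct identity-on-objects pseudo double functors $F \colon \Cnr(\mathsf{H}((-)_*)) \to \Poly(\E)$ and $G$ back, acting as the identity on $\E$. The remark after \cref{rem:HHconverse} gives the data on the left. A horizontal arrow $I \tor J$ of $\Cnr(\mathsf{H}((-)_*))$ is a pair $(m,t)$, where $m$ is a span $s \colon I \leftarrow A \rightarrow B \colon f$ in $\E$ read as a $1$-cell $B \to I$ of $\Span(\E)$, and $t \colon B \to J$ is an arrow of $\E$. We send this to the polynomial $(s,f,t)$. Conversely $G$ sends $(s,f,t)$ back to $(m,t)$. On horizontal arrows $F$ and $G$ are therefore mutually inverse.

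A cell of the corners double category, as in~\eqref{eq:HHcnr-cell}, is a pair $(\kappa, q)$. Here $q \colon B \to B'$ satisfies $t'q = rt$ in $\E$, and $\kappa$ is a $2$-cell $p_* \circ m \Rightarrow m' \circ q_*$ in $\Span(\E)$, possibly with its direction reversed as drawn. We compute both sides:
\begin{itemize}
\item $p_* m$ is the span $I' \leftarrow A \rightarrow B$ with legs $ps$ and $f$;
\item $m' q_*$ is the span $I' \leftarrow X \rightarrow B$ with legs $s'\pi'$ and $\pi_B$, where $X = B \times_{B'} A'$ is the pullback of $f'$ along $q$.
\end{itemize}
A span map between these is exactly an arrow $X \to A$ over $B$ whose composite with $s$ agrees with $s' \pi'$ after applying $p$. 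This is precisely the map $m$ together with the pullback in~\eqref{polycell}. We fix the orientation conventions so that this direction matches the $\Poly(\E)$ cells. Vertical composition matches because pasting such cells in $\Span(\E)$ amounts to composing the induced maps across pasted pullbacks. Thus $F$ and $G$ are inverse isomorphisms of the categories of horizontal arrows and cells, strictly compatible with $\dom$ and $\cod$.

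Next we compare horizontal composition. Take $(m,t) = (s,f,t)$ and $(n,v) = (u,g,v)$. By the description of composition in $\Cnr$, their composite is $(m\tilde n, v\tilde p)$, where $(\tilde n, \tilde p_*)$ is the chosen bipullback of $t_*$ along $n$. We may choose this lift to be the one built in the proof of \cref{prop:spans-calibration}; any other choice gives an isomorphic distributive law, by uniqueness of cartesian lifts. With that choice, $\tilde p = \Pi_g \Delta_u t \colon M \to D$ and $\tilde n = (\pi\epsilon)_* k^*$. Computing the span composite $m \circ \tilde n$ in $\Span(\E)$ by pullback gives:
\begin{itemize}
\item the apex $N = C' \times_B A$, where $\pi\epsilon$ is paired with $f$;
\item the left leg $s \circ \mathrm{pr}_A$;
\item a middle arrow $N \to C' \to M$ via $k$;
\item a right arrow $v \circ \Pi_g\Delta_u t$.
\end{itemize}
This is exactly the Gambino--Kock composite~\eqref{polycomposition}, up to the canonical isomorphism arising from the composition constraints of $\Span(\E)$ and the factorisation $n \cong u_*g^*$. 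Horizontal identities $(\id,\id)$ and $(1,1,1)$ agree up to the unit constraints of $\Span(\E)$. These canonical globular isomorphisms give the comparison cells making $F$ a pseudo double functor. Their naturality in cells holds because they are all induced by universal properties of pullbacks. The inverses give $G$'s comparison cells, and the composites $GF$ and $FG$ are identities on underlying data. The resulting identity natural isomorphisms are then trivially globular and coherent, so they witness the equivalence.

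The main obstacle is the coherence of $F$: we must check its compatibility with the associators and unitors. On the left these come from $\cone$–$\cfour$, which are the comparison isomorphisms between bipullbacks. On the right they are transported by Gambino and Kock from polynomial functors. We do not attempt a direct comparison. Instead we use that $\Poly(\E)$ is defined in~\cite{GK2013} by transport along the fully faithful extension map to polynomial functors $\Sigma_t\Pi_f\Delta_s$. So it suffices to construct a pseudo double functor from $\Cnr(\mathsf{H}((-)_*))$ to the double category of polynomial functors, agreeing with $F$ on objects and arrows. For this we send $(m,t)$ to $\Sigma_t \circ \Pi_f\Delta_s$. The comparison isomorphisms then come from the Beck--Chevalley and distributivity isomorphisms that underlie~\cite[Prop.~5.4, 7.1]{Street2020}, and the coherence axioms follow from the uniqueness of mates. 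The remaining verification should reduce to this uniqueness, since on both sides every constraint is a canonical isomorphism between representatives of the same universal object. We expect the bookkeeping here to be the heaviest part.
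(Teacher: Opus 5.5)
Your proof takes the paper's approach for the main identifications, differs from it on coherence, and needs one correction.

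\textbf{Where you agree with the paper.} The paper also reads a horizontal arrow as a span $B \leftarrow A \rightarrow I$ in $\Span(\E)$ together with $t \colon B \to J$. It likewise identifies $\kappa$ with a span map $X \to A$ out of $B \times_{B'} A'$. It matches composites by choosing the bipullback built in the proof of \cref{prop:spans-calibration}.

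\textbf{Correction on the direction of $\kappa$.} This is not a convention you are free to fix. \cref{def:Jdouble}, read through the horizontal opposite, forces $\kappa \colon m' q_* \Rightarrow p_* m$, as the paper writes. That is exactly why $\kappa$ is a map $X \to A$. The direction $p_* m \Rightarrow m' q_*$ you first wrote would give maps $A \to X$, which would not match \eqref{polycell}.

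\textbf{Where you genuinely differ: coherence.} The paper stops after the identification above. It says the remaining structure is on both sides induced by pullback and dependent product, and leaves the comparison to the reader. You instead propose to build a pseudo double functor $H$ into polynomial functors. You then obtain the pseudo-functoriality of $F$ by pulling back along the fully faithful Gambino--Kock extension $E$, from which $\Poly(\E)$'s horizontal structure is transported.

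This route would give an actual verification where the paper gives none. The precise mechanism is as follows. The constraints of $H$ are mates, under $\Sigma_t \dashv \Delta_t$ and $\Sigma_{\tilde p} \dashv \Delta_{\tilde p}$, of the images of the cartesian cells under the (contravariant) pseudofunctor $(f,s) \mapsto \Pi_f \Delta_s$ on $\Span(\E)$. Hence the pasting equations defining $\omega_1$--$\omega_4$ transfer by functoriality of the mates correspondence. That, rather than ``uniqueness of mates'', is what makes your last step work.

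\textbf{Two points if you carry it out.} First, $H$ must agree with $EF$ on cells, not only on objects and arrows. Second, $F$'s comparison cells are then forced: they are the globular cells whose $E$-images are $H$'s constraints composed with the inverse constraints of $E$. These coincide with the ``canonical'' isomorphisms you named earlier only if you check that they do. As written, your coherence step is still a sketch, but it sketches a more complete argument than the one the paper gives.
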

\begin{proof}
  Both double categories have $\E$ as their category of objects and
  vertical arrows. Next, a horizontal arrow $I \tor J$ of
  $\Cnr(\mathsf{H}((-)_*))$ comprises a $1$-cell
  $f \colon B \leftarrow A \rightarrow I \colon s$ of $\Span(\E)$
  and a map $t \colon B \to J$ of $\E$: which is precisely a
  polynomial~\eqref{eq:polynomial}.

  A cell of $\Cnr(\mathsf{H}((-)_*))$ with the same boundary as
  in~\eqref{polycell} comprises a square in $\Span(\E)$ and a
  commuting square in $\E$, as in~\eqref{eq:HHcnr-cell}:
  \begin{equation*}
    \begin{tikzcd}
      I \arrow[swap]{d}{p_*} &
      B \arrow[""{name=top, anchor=center, inner sep=0}, swap]{l}{(f,s)} \arrow{d}{q_*} &&
      B \arrow{r}{t} \arrow[swap]{d}{q} &
      J \arrow{d}{r} \\
      I' &
      B' \arrow[""{name=bot, anchor=center, inner sep=0}]{l}{(f',s')} &&
      B' \arrow[swap]{r}{t'} &
      J'\rlap{ .}
      \arrow[Rightarrow, from=bot, to=top, shorten <=1.0em, shorten >=1.0em, "\kappa"']
    \end{tikzcd}
  \end{equation*}
  The square in $\E$ is the right-hand square of~\eqref{polycell}.
  As for $\kappa$, the composite span $(f',s')\,q_*$ is formed by the
  pullback on the left below, so $\kappa$ is a map of spans $m$ as on
  the right:
  \begin{equation*}
    \begin{tikzcd}
      X \arrow{r}{\pi_2} \arrow[swap]{d}{\pi_1} \pb{dr} &
      A' \arrow{d}{f'} \\
      B \arrow[swap]{r}{q} &
      B'
    \end{tikzcd}
    \qquad\qquad
    \begin{tikzcd}
      & A \arrow[swap]{dl}{f} \arrow{dr}{ps} & \\
      B &
      X \arrow{u}{m} \arrow{l}{\pi_1} \arrow[swap]{r}{s'\pi_2} &
      I'\rlap{ .}
    \end{tikzcd}
  \end{equation*}
  These are precisely the data and conditions of the left-hand part
  of~\eqref{polycell}.

  Horizontal identities agree, and so do composites of horizontal
  arrows. The composite of $(s,f,t)$ and $(u,g,v)$ in
  $\Cnr(\mathsf{H}((-)_*))$ is built from the bipullback of $t_*$
  along the span $(g,u)$, which the proof of
  \cref{prop:spans-calibration} constructs as the upper part
  of~\eqref{polycomposition}, with $\tilde n$ the span
  $M \xleftarrow{k} C' \xrightarrow{\pi\epsilon} B$. Composing
  with $(f,s)$ and $v$ gives the apex $N$ and the arrow
  $v \cdot \Pi_g\Delta_u t$ of~\eqref{polycomposition}. That is, the
  distributive law sends the cospan
  $B \xrightarrow{t} J \xleftarrow{(g,u)} D$ to the span
  $B \xleftarrow{(k,\pi\epsilon)} M \xrightarrow{\Pi_g\Delta_u t} D$. The remaining structure---vertical and
  horizontal composition of cells, and the coherence cells---is on both sides
  induced by the universal properties of pullback and dependent
  product, and we leave checking that the two agree to the reader.
\end{proof}

Ross's two other main examples~\cite{Street2020}, relations and
modules, are also calibrations in our sense.

\begin{example}\label{ex:rel}
  Let $\C$ be a regular category in which, for each arrow
  $h \colon y \to x$, the pullback map
  $h^{-1} \colon \mathrm{Sub}\,x \to \mathrm{Sub}\,y$ has a right
  adjoint $\forall_h$; for instance, $\C$ could be a topos. Let $\A$
  be the category of objects and monomorphisms of $\C$, and let
  $\M = \Rel(\C)$ be the locally ordered bicategory of relations in
  $\C$, whose $1$-cells from $x$ to $y$ are the jointly monic spans
  $m_0 \colon x \leftarrow w \rightarrow y \colon m_1$, composed by
  pullback followed by image factorisation. We claim that the
  identity-on-objects functor $J = (-)_*$, which sends each
  monomorphism $p \colon x \to y$ to its graph $(\id_x, p)$, is a
  calibration. Throughout, we use that for a monomorphism $p$,
  composing a relation $(m_0, m_1)$ with $p_*$ gives $(m_0, pm_1)$,
  with no image factorisation needed.

  For \ref{C1}, note that $J$ is locally fully faithful: an
  inequality $k_* \le k'_*$ is witnessed by an arrow $h$ with
  $\id h = \id$ and $k'h = k$, forcing $k = k'$. So by
  \cref{rem:C1}, and since $\Rel(\C)$ is locally ordered, we need
  only show that if $q \colon y \to z$ and $r \colon x \to z$ are
  monomorphisms and $p = (p_0, p_1)$ is a relation with
  $q_* p = r_*$, then $p$ is the graph of a monomorphism. Now
  $q_* p = (p_0, qp_1)$, which can equal $(\id_x, r)$ only if $p_0$ is
  invertible; whereupon $p = (p_1 p_0^{-1})_*$, and $p_1 p_0^{-1}$ is
  monic as $q p_1 p_0^{-1} = r$ is.

  For \ref{C2}, let $p \colon y \to z$ be a monomorphism. For each
  object $x$, composition with $p_*$ is the map
  $\M(x,y) \to \M(x,z)$ sending $(m_0, m_1)$ to $(m_0, pm_1)$, which
  is a discrete fibration of posets; hence $p_*$ is a groupoid
  fibration~\cite[Def.~3.3]{Street2020}.

  For \ref{C3}, let $p \colon x \to z$ be a monomorphism and
  $n = (n_0, n_1)$ a relation from $y$ to $z$. Let
  $\tilde p \colon w \to y$ be the inclusion of the subobject
  $\forall_{n_0} n_1^{-1}(x)$---the largest subobject of $y$ whose
  image under $n$ lies in $x$---and let $\tilde n$ be the restriction
  of $n$ to it, regarded as a relation from $w$ to $x$
  (cf.~\cite[Ex.~10.3]{Street2020}). This is the bipullback of
  $p_*$ along $n$ constructed in~\cite[Thm.~9.3,
  Prop.~10.2]{Street2020}.

  The double category of corners of $\mathsf{H}J$ then has Ross's
  polynomials in $\Rel(\C)$ as its horizontal arrows.
\end{example}

\begin{example}\label{ex:mod}
  Let $\M = \Mod$ be the bicategory of small categories and modules,
  whose morphisms from $K$ to $B$ are functors
  $B^\op \times K \to \Set$, and whose composition is by
  coend~\cite[Ex.~2.2]{Street2020}. Let $\A = \DFib$ be the category
  of small categories and discrete fibrations. We claim that the
  identity-on-objects pseudofunctor $(-)_*$ which sends each discrete
  fibration $f \colon K \to B$ to the module $f_*(b,k) = B(b,fk)$ is a
  calibration.

  Throughout, we use the following description of composition with
  $q_*$ for a discrete fibration $q \colon E \to B$. Let $z$ be the
  presheaf on $B$ corresponding to $q$, so that $E$ is its category of
  elements; write $\widehat B \defeq [B^\op, \Set]$, and
  $y \colon B \to \widehat B$ for the Yoneda embedding. For each small
  category $K$ we have
  $\M(K,E) \simeq [K, \widehat E] \simeq [K, \widehat B/z] \cong [K,
  \widehat B]/\Delta z$, and under these equivalences, composition
  with $q_*$ is isomorphic to the projection functor
  $[K, \widehat B]/\Delta z \to [K, \widehat B]$. In particular, it is
  a groupoid fibration, whence each $q_\ast$ is a groupoid
  fibration~\cite[Def.~3.3]{Street2020}, which verifies \ref{C2}.

  For \ref{C1}, bipullbacks are invariant under equivalence, so we
  may compute the one in \ref{C1} with composition with $q_*$
  replaced by the projection above. This is a discrete fibration, and
  so an isofibration, so the bipullback may be computed as an
  ordinary pullback. An object of that pullback is a discrete
  fibration $r \colon K \to B$ together with a natural transformation
  $y r \To \Delta z$; by the Yoneda lemma, that is a functor
  $s \colon K \to E$ with $qs = r$, and $s$ is a discrete fibration
  because $q$ and $qs$ are. The pullback is therefore the discrete
  category of such $s$, and the comparison from $\A(K,E)$ is an
  isomorphism.

  For \ref{C3}, let $p \colon E \to B$ be a discrete fibration with
  corresponding presheaf $z$ as above, and let $n$ be a module from
  $K$ to $B$. Let $\tilde p \colon P \to K$ be the projection from the
  category of elements of the presheaf
  $k \mapsto \widehat B(n(-,k), z)$ on $K$, which is the right lifting
  of $z$ through $n$. Ross's construction of bipullbacks in the proof
  of~\cite[Thm.~9.3]{Street2020} gives a bipullback of $p_*$ along $n$
  whose leg into $K$ is the tabulation of this right lifting, namely
  $\tilde p_*$~\cite[Prop.~10.4, Cor.~10.5]{Street2020}.

  The double category of corners of $\mathsf{H}((-)_*)$ then has the
  polynomials in $\Mod$ of~\cite[\S10]{Street2020} as its horizontal
  arrows. By~\cite[Prop.~8.6]{Street2020},
  each such polynomial $(m, p) \colon X \tor Y$ induces a functor
  $\widehat X \to \widehat Y$: right lifting through $m$, followed by
  composition with $p_*$.
\end{example}

\bibliographystyle{alpha}
\bibliography{multicat}

\end{document}